%
%
%

\documentclass[12pt,leqno,a4paper]{article}
\usepackage{graphicx}
\usepackage{caption}
\usepackage{subcaption}
\usepackage{color}

\usepackage[pdfborder={0 0 0}]{hyperref}

\usepackage{amssymb}
\usepackage{amsmath}
\usepackage{amsthm}
\usepackage{mathrsfs}

\usepackage[shortlabels]{enumitem}

\newtheorem{theorem}{Theorem}
\numberwithin{theorem}{section}

\newtheorem{lemma}[theorem]{Lemma}
\newtheorem{corollary}[theorem]{Corollary}

\theoremstyle{remark}

\theoremstyle{definition}
\newtheorem{remark}[theorem]{Remark}
\newtheorem{definition}[theorem]{Definition}

\numberwithin{equation}{section}

\newcommand\set[1]{\left\{\,#1\,\right\}}		
\newcommand\abs[1]{\left|#1\right|}				
\newcommand\ska[1]{\left\langle#1\right\rangle} 
\newcommand\norm[1]{\left\Vert#1\right\Vert}	

\DeclareMathOperator{\dist}{dist}				
\DeclareMathOperator{\diam}{diam}				
\DeclareMathOperator{\id}{id}					
\DeclareMathOperator{\supp}{supp}				
\DeclareMathOperator{\tr}{tr}					
\DeclareMathOperator{\divv}{div}				
\newcommand\hoeld[1]{{\cC^{\,#1\,}(\overline{Q})}}

\def\N{\mathbb{N}}
\def\Z{\mathbb{Z}}

\def\R{\mathbb{R}}

\newcommand{\cC}{{\mathcal C}}

\newcommand{\cI}{{\mathcal I}}

\newcommand{\cO}{{\mathcal O}}

\newcommand{\cQ}{{\mathcal Q}}

\voffset -20pt
\hoffset 17pt
\topmargin 0pt
\textwidth 0.7\paperwidth
\marginparsep 0pt
\marginparwidth 0pt
\textheight 636pt
\oddsidemargin 0mm

\usepackage[numbers,sort]{natbib}

\begin{document}

\title{A general way to confined stationary Vlasov-Poisson plasma configurations}
\author{Yulia O. Belyaeva\thanks{ Supported by the Russian Foundation for Basic Research (grant 20--01--00288).}\and  Bj\"orn Gebhard\thanks{Supported by the German-Russian Interdisciplinary Science Center (G-RISC) funded by the German Federal Foreign Office via the German Academic Exchange Service (DAAD) (Project numbers: M-2018b-2, A-2019b-5\_d).}\and  Alexander L. Skubachevskii$^*$}
\date{}
\maketitle

\begin{abstract}
We address the existence of stationary solutions of the Vlasov-Poisson system on a domain $\Omega\subset\mathbb{R}^3$ describing a high-temperature plasma which due to the influence of an external magnetic field is spatially confined to a subregion of $\Omega$. In a first part we provide such an existence result for a generalized system of Vlasov-Poisson type and investigate the relation between the strength of the external magnetic field, the sharpness of the confinement and the amount of plasma that is confined measured in terms of the total charges. The key tools here are the method of sub-/supersolutions and the use of first integrals in combination with cutoff functions. In a second part we apply these general results to the usual Vlasov-Poisson equation in three different settings: the infinite and finite cylinder, as well as domains with toroidal symmetry. This way we prove the existence of stationary solutions corresponding to a two-component plasma confined in a Mirror trap, as well as a Tokamak. 
\end{abstract}
%
%

\section{Introduction}\label{sec:introduction}
The Vlasov-Poisson system describing a two-component plasma 
under the influence of an external magnetic field reads
\begin{gather}\label{eq:traditional_V}
\partial_tf^\beta+\ska{v,\nabla_xf^\beta}_{\R^3}+\frac{q_\beta}{m_\beta}\ska{-\nabla_x\varphi+\frac{v\times B}{c},\nabla_vf^\beta}_{\R^3}=0,
\end{gather}
\begin{gather}\label{eq:traditional_P}
-\Delta\varphi(x,t)=4\pi\sum\limits_{\beta=\pm} q_\beta\int_{\R^3}f^\beta\:dv. 
\end{gather} 
Here $f^\beta=f^\beta(x,v,t)\geq 0$, $\beta\in\set{+,-}$,
is the distribution function
of positively charged ions
(for $\beta=+$) and electrons (for $\beta=-$) resp., at a point $x$ with velocity $v\in\R^3$
at the time $t\in[0,T)$. Depending on the application the system can be considered for $x \in \R^3$, i.e. on the whole space, or, as will be the case in the present paper, on a domain $\Omega\subset\R^3$ whose boundary we suppose to be sufficiently smooth. In the case of a domain the Poisson equation \eqref{eq:traditional_P} for the potential $\varphi:\overline{\Omega}\times[0,T)\rightarrow\R$ of the self-consistent electric field will be complemented by the Dirichlet boundary condition
\begin{equation}\label{eq:trad_BC}
\varphi(x,t)=0,\quad (x,t)\in\partial\Omega\times[0,T),
\end{equation}
modelling a perfectly conducting reactor wall,
and initial conditions
\begin{equation}\label{e1.4}
f^\beta(x,v,t)_{|t=0}=f_0^\beta(x,v).
\end{equation}
The parameters $m_{\beta}>0$, $\beta=\pm$, are the masses of the charged particles, $q_-<0$ is the charge of the electrons, $q_+>0$ the charge of the ions, $c>0$ is the speed of light and $B:\overline{\Omega}\times[0,T)\rightarrow\R^3$ is the external magnetic field influencing the particle trajectories through the Lorentz force, and $f_0^\beta(x,v)$ is the initial distribution function.

The Vlasov equations in general have a broad range of applications in various fields of physics. They have been derived in 1938 \cite{Vlasov} by Vlasov, who justified that in the kinetic description of a high-temperature plasma collisions between different particles can be neglected. Neglecting the corresponding collision term in the Boltzmann equation he obtained the Vlasov-Maxwell equations, from which one derives the Vlasov-Poisson system by additionally neglecting the self-consistent magnetic field generated by the motion of the charged particles. 

One particular application in plasma physics is the construction of a reactor for controlled thermonuclear fusion, see for example \cite{HazeltineMeiss,Miyamoto,Stacey}. Among such devices are reactor chambers based on a toroidal form, like Tokamaks and Stellarators, and a cylindrical  form, like Mirror traps or z-pinch devices. Due to the high temperature of the plasma a key feature in the realization of such a reactor is the use of an external magnetic field, which has to be choosen in a way, such that the plasma is strictly confined in the interior of the chamber.

In the present paper we prove within the Vlasov-Poisson description \eqref{eq:traditional_V}, \eqref{eq:traditional_P}, \eqref{eq:trad_BC} of a two-component plasma the existence of stationary configurations that are strictly confined in a chamber of Tokamak type, as well as stationary solutions corresponding to a confinement in a Mirror trap device. In view of the macroscopic charge neutrality of a high-temperature plasma it is important to consider a two-component system modelling positive and negative charged particles, but our results also apply to the one-component case and more generally even to a plasma consisting of $N\in\N$ different types of particles. In any of these cases, to the best of our knowledge, there has been so far no existence result concerning stationary confined solutions in toroidal domains, nor in Mirror traps.
Before providing further details, let us quickly give a short, due to the extensive amount of literature, not complete, survey on the topic.

In the case $\Omega=\R^3$ the Poisson equation 
\eqref{eq:traditional_P} can be solved using the Newtonian potential. 
Substituting the convolution into the Vlasov equation \eqref{eq:traditional_V} we get 
an integro-differential equation with singular kernel.  
At first global solvability has been obtained in \cite{BraunHepp,Dobrushin,Maslov}
for the corresponding equation with a ``smoothed'' kernel.
The existence and further properties of global generalized solutions of the Cauchy problem for the actual Vlasov-Poisson
equations have been studied by A.A.Arsen'ev \cite{Arsen'ev}, R.J.DiPerna, P.L.Lions \cite{DiPernaLions}, and E.Horst, R.Hunze \cite{HorstHunze},  while 
global classical solutions have been investigated in the papers of C.Bardos, P.Degond \cite{Bardos}, J.Batt \cite{Batt1977}, E.Horst \cite{Horst}, K.Pfaffelmoser  \cite{Pfaffelmoser}, and J.Schaeffer \cite{Schaffer}. The initial-boundary value problems for classical solutions have also been studied in a half-space under reflection conditions at the boundary, see \cite{Guo,HwangVelazquez}.

Concerning the confinement problem for a two--component plasma the articles \cite{BelSkub20,SkubDAN,Skub2013,SkubUMN,SkubTsu} provide quantitative estimates for a bounded magnetic field guaranteeing the classical solution to the initial-boundary value problem of \eqref{eq:traditional_V}, \eqref{eq:traditional_P}, \eqref{eq:trad_BC}, \eqref{e1.4} to exist and to be located at some distance from the boundary of a half-space and an infinite cylinder.

Unlike \cite{BelSkub20,SkubDAN,Skub2013,SkubUMN,SkubTsu}, the papers 
\cite{CaprinoCavallaroMarchioro_cylinder,CaprinoCavallaroMarchioro_torus} deal with the confinement problem for a one--component plasma with  an external magnetic field, which becomes infinite on a boundary. Equation \eqref{eq:traditional_P} is studied in the whole space $\R^3$, and the influence of boundary conditions to a solution of equation \eqref{eq:traditional_P} is not considered. This allows to apply the approach of K.Pfaffelmoser  \cite{Pfaffelmoser} and J.Schaeffer \cite{Schaffer} in order to obtain a global existence result for the Cauchy problem.

Stationary solutions of the Vlasov-Poisson equations have been studied in various settings
\cite{BattFaltenbacherHorst,Batt2019,MMNP,GreengardRaviart,
Knopf,Pokhozhaev,SkubUMN,Rein,Vedenyapin1,Vedenyapin2,Weber}. 
Let us focus on the ones addressing the confinement problem. The existence of stationary solutions to \eqref{eq:traditional_V}, \eqref{eq:traditional_P}, \eqref{eq:trad_BC} with vanishing potential and density distribution functions supported away from the considered
boundary, as well as compactly supported distribution functions have first been shown to exist for $\Omega$ being an infinite cylinder and a half-space in  \cite{SkubUMN,MMNP}. 
On $\Omega=\R^3$ stationary solutions confined to an infinite cylinder and with the Newtonian electric potential have been constructed in \cite{Knopf}. In \cite{Weber} stationary confined solutions in an infinite cylinder have also been constructed for the relativistic Vlasov-Maxwell system. The proofs in \cite{Knopf,Weber} rely, after a suitable ansatz and reduction, on a fixed point argument.

In the above articles, as well as in the present, the external magnetic field is fixed, but we also like to mention the different approach in \cite{Knopf_control,KnopfWeber_control,Weber_control}, where the external magnetic field is viewed as part of an optimal control problem.

In this paper we consider stationary solutions of the Vlasov--Poisson system for a multi--component plasma with the Dirichlet boundary condition for the electric potential. We will do this first for a general system of Vlasov-Poisson type and then apply the outcome to three specific settings: the infinite and finite cylinder, as well as domains with toroidal symmetry. Using first integrals and cutoff functions, we reduce the problem of finding confined stationary solutions to a quasilinear elliptic differential equation with Dirichlet boundary condition. Finally, the existence of stationary solutions with compactly supported distribution functions is proven based on the method of sub-- and supersolutions for the first boundary value problem for quasilinear elliptic equations, see \cite{ako_dirichlet_1961}. We note that in our case the magnetic field does not need to be singular, and the electric potential, generally speaking, is not trivial.

In the remaining part of the introduction we will illustrate our results based on one specific case, which will be the ``Tokamak-case'' (Figure 1).

\begin{figure}
\centering\includegraphics[width=12cm]{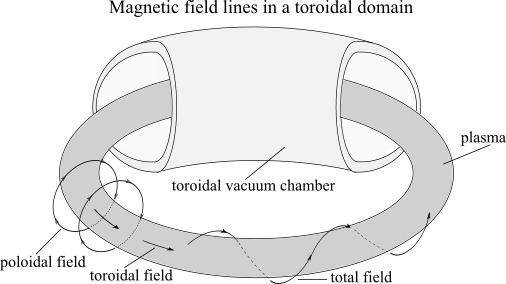}
\caption{}
\end{figure}

The group $S^1=\R/2\pi\Z$ acts isometrically on $\R^3$ via rotations around the $x_3$-axis. For $\theta\in S^1$ and $x\in\R^3$, this action is denoted by
\[
\theta *x=\begin{pmatrix}
\cos\theta & -\sin\theta & 0\\
\sin\theta & \cos \theta & 0\\
0 & 0 & 1
\end{pmatrix}\begin{pmatrix}
x_1\\x_2\\x_3
\end{pmatrix}.
\]
Let now $\Omega\subset\R^3$ be a smooth bounded domain which is invariant under the $S^1$-action and which does not contain a point of the form $(0,0,x_3)$ in its closure.

For $x_0\in\R^3$, $\delta>0$, we define the toroidal neighborhoods
\begin{equation}\label{eq:definition_of_toroidal_nbhd}
\cO_\delta(x_0):=\set{x\in\R^3:\dist\left(x,S^1*x_0\right)<\delta}=S^1*B_\delta(x_0).
\end{equation}

A function $f:\overline{\Omega}\times\R^3\rightarrow \R$, $\varphi:\overline{\Omega}\rightarrow\R$ resp., is said to be $S^1$-invariant provided $f(\theta*x,\theta*v)=f(x,v)$, $\varphi(\theta*x)=\varphi(x)$ resp., for all $\theta\in S^1$, $x\in\overline{\Omega}$, $v\in\R^3$.
Similar a vector field $B:\overline{\Omega}\rightarrow\R^3$ is $S^1$-equivariant if $B(\theta*x)=\theta*B(x)$ for all $\theta\in S^1$, $x\in\overline{\Omega}$.

For $x\in\overline{\Omega}$, let $P_x:\R^3\rightarrow\R^3$ be the orthogonal projection onto the plane spanned by the vectors $(x_1,x_2,0)$ and $(0,0,1)$. We use this projection to decompose a magnetic field $B:\overline{\Omega}\rightarrow\R^3$ into its poloidal part $P_xB(x)$ and its toroidal part $(\id_{\R^3}-P_x)B(x)$.

Let $x_0\in\Omega$ and set $r_0:=\sqrt{x_{0,1}^2+x_{0,2}^2}>0$, $z_0:=x_{0,3}$. For the confinement of the spatial supports of $f^+$, $f^-$ we rely on the  magnetic field $B^{x_0}:\overline{\Omega}\rightarrow \R^3$,
\[
B^{x_0}(x):=\frac{1}{x_1^2+x_2^2}\begin{pmatrix}
x_1(x_3-z_0)\\
x_2(x_3-z_0)\\
-\sqrt{x_1^2+x_2^2}\left(\sqrt{x_1^2+x_2^2}-r_0\right)
\end{pmatrix}.
\]
Note that $P_xB^{x_0}(x)=B^{x_0}(x)$, i.e. $B^{x_0}$ is a poloidal field. One can also directly compute that $B^{x_0}$ is divergence-free.

For clarification, by a stationary solution of the Vlasov-Poisson system on $\Omega$ we understand a triple $(f^+,f^-,\varphi)$ of time independent functions $f^\pm\in\cC^1(\overline{\Omega}\times\R^3)$, $f^\pm\geq 0$, $\varphi\in\cC^2(\overline{\Omega})$ with $\int_{\R^3}f^\pm(\cdot,v)\:dv\in\cC^0(\overline{\Omega})$ and such that these functions satisfy the equations \eqref{eq:traditional_V} (with $\partial_tf^\beta=0$), $\beta =\pm$, as well as the boundary value problem  \eqref{eq:traditional_P}, \eqref{eq:trad_BC}.

The total charge of the $\beta$th component of a stationary solution $(f^+,f^-,\varphi)$ is defined by
$\cQ^\beta:=q_\beta\norm{f^\beta}_{L^1(\Omega\times\R^3)}$.

Our main result for the two-component Vlasov-Poisson system \eqref{eq:traditional_V}, \eqref{eq:traditional_P}, \eqref{eq:trad_BC} considered on the toroidal domain $\Omega$ reads as follows.
\begin{theorem}\label{thm:toroidal_solutions}
Let $x_0\in\Omega$ and $B_b:\overline{\Omega}\rightarrow\R^3$ be a $S^1$-equivariant magnetic field with poloidal part $P_xB_b(x)=bB^{x_0}(x)$, where $b>0$ is a parameter.
\begin{enumerate}[(i)]
\item Let $b>0$ be fixed. Then to any collection of numbers $0<\delta_\pm<\dist(x_0,\partial\Omega)$, $\varepsilon_\pm>0$, $c>0$ there exists a $S^1$-invariant stationary solution $\left(f^+,f^-,\varphi\right)$ of the Vlasov-Poisson system \eqref{eq:traditional_V}, \eqref{eq:traditional_P}, \eqref{eq:trad_BC} considered with $B_b$, such that  $\supp f^\pm\subset \cO_{\delta_\pm}(x_0)\times B_{\varepsilon_\pm}(0)$ and $\cQ^+=c\abs{\cQ^-}>0$.
\item Let $(f^+,f^-,\varphi)$ be a solution from (i) associated with the parameter values $b,\delta_\pm,\varepsilon_\pm,c$. Then for any $\lambda\in(0,\infty)$ the Vlasov-Poisson system \eqref{eq:traditional_V}, \eqref{eq:traditional_P}, \eqref{eq:trad_BC} considered with magnetic field $B_{\lambda b}$ has a stationary solution $(f^+_\lambda,f^-_\lambda,\varphi_\lambda)$ with $\supp f^\pm_\lambda\subset \cO_{\delta_\pm}(x_0)\times B_{\lambda\varepsilon_{\pm}}(0)$ and total charges $\cQ_\lambda^\pm=\lambda^2\cQ^\pm$.
\end{enumerate}
\end{theorem}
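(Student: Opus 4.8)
\emph{Proof plan.} The plan is to realise $f^\pm$ as functions of first integrals of the stationary characteristic flow—so that the Vlasov equations hold automatically—to localise their phase--space support by composing with cutoff functions, and to close the system by solving the Poisson equation for $\varphi$ via sub-- and supersolutions. Two first integrals are available for the field $B_b$. Since the magnetic force does no work and $\varphi$ is time independent, the energy $E^\beta(x,v)=\frac{m_\beta}{2}\abs{v}^2+q_\beta\varphi(x)$ is conserved. The $S^1$--symmetry ($\varphi$ is $S^1$--invariant, $B_b$ is $S^1$--equivariant) yields a conserved canonical angular momentum about the $x_3$--axis. Writing $r=\sqrt{x_1^2+x_2^2}$ and $\hat\phi=r^{-1}(-x_2,x_1,0)$, a direct computation represents the poloidal part through a flux function, $bB^{x_0}=r^{-1}\nabla\psi\times\hat\phi$, with
\[
\psi(x)=-\tfrac12\big[(r-r_0)^2+(x_3-z_0)^2\big]=-\tfrac12\dist\!\big(x,S^1*x_0\big)^2,
\]
and the conserved momentum is $p_\phi^\beta(x,v)=m_\beta(x_1v_2-x_2v_1)+\frac{q_\beta b}{c}\psi(x)$. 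I would verify $\frac{d}{dt}p_\phi^\beta=0$ along trajectories by direct differentiation; the instructive point is that the unspecified toroidal part of $B_b$ cancels, so only the prescribed poloidal field enters $p_\phi^\beta$. The identity $\psi=-\tfrac12\dist(\cdot,S^1*x_0)^2$ is the geometric core of the confinement, since the level sets of $\psi$ are precisely the tori $\partial\cO_\delta(x_0)$.

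For the confinement I would take the ansatz $f^\beta=\kappa_\beta\,g_\beta(E^\beta)\,h_\beta(p_\phi^\beta)$ with $\kappa_\beta>0$ and nonnegative cutoffs $g_\beta,h_\beta\in\cC^1$ supported in $(-\infty,e_\beta)$ and $(-\pi_\beta,\pi_\beta)$, respectively. On $\supp f^\beta$ the momentum identity gives
\[
\tfrac{\abs{q_\beta}b}{2c}\,\dist\!\big(x,S^1*x_0\big)^2=\abs{p_\phi^\beta-m_\beta(x_1v_2-x_2v_1)}\le\pi_\beta+m_\beta r_{\max}\abs{v},
\]
where $r_{\max}=\max_{\overline\Omega}r<\infty$ and I used $\abs{x_1v_2-x_2v_1}\le r\abs v$. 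Moreover $\frac{m_\beta}{2}\abs v^2=E^\beta-q_\beta\varphi<e_\beta+\abs{q_\beta}\norm{\varphi}_\infty$. Hence, once an a priori bound $\norm{\varphi}_\infty\le\Phi_0$ is secured, one first fixes $\pi_\beta$ and a velocity scale $\varepsilon_\beta'\le\varepsilon_\beta$ small, and then $e_\beta$, so that on the support $\abs v<\varepsilon_\beta'\le\varepsilon_\beta$ and, by the displayed inequality, $\dist(x,S^1*x_0)^2<\frac{2c}{\abs{q_\beta}b}(\pi_\beta+m_\beta r_{\max}\varepsilon_\beta')<\delta_\beta^2$. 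This mechanism uses $b$ only through fixed constants and is robust: shrinking the energy window makes the velocities, and through the inequality also the spatial spread, as small as one wishes, while $\abs v<\varepsilon_\beta$ is merely an upper bound met automatically. This is exactly why (i) can hold with $b$ fixed and $\delta_\pm,\varepsilon_\pm$ arbitrary.

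Substituting the ansatz into \eqref{eq:traditional_P} turns it into the semilinear Dirichlet problem $-\Delta\varphi=4\pi\sum_\beta q_\beta n^\beta(x,\varphi)$ on $\Omega$, $\varphi|_{\partial\Omega}=0$, with $n^\beta(x,s)=\kappa_\beta\int_{\R^3}g_\beta(\frac{m_\beta}{2}\abs v^2+q_\beta s)h_\beta(p_\phi^\beta)\,dv$. The right--hand side is continuous and, for $\abs s$ bounded, bounded (at fixed $x$ the $v$--support is a ball intersected with a slab of bounded volume); the standard $L^\infty$ a priori estimate for the Dirichlet problem then supplies $\Phi_0$ with $\norm{\varphi}_\infty\le\Phi_0$ for any solution, and ordered constant barriers $\pm\Phi_0$, so that Ak\^o's theorem \cite{ako_dirichlet_1961} furnishes a solution, which elliptic regularity upgrades to $\varphi\in\cC^2(\overline\Omega)$. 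Since $\Phi_0$ depends only on the chosen cutoffs, taking the amplitudes $\kappa_\pm$ small renders $\Phi_0$ small enough to validate the confinement estimate above, closing the construction and producing $f^\pm\in\cC^1(\overline\Omega\times\R^3)$ with the stated supports. The delicate remaining point—and the step I expect to be the main obstacle—is the prescribed charge ratio $\cQ^+=c\abs{\cQ^-}$: since $\cQ^\beta=q_\beta\kappa_\beta\iint g_\beta h_\beta\,dv\,dx$ depends on $(\kappa_+,\kappa_-)$ also implicitly through $\varphi$, I would argue that in the small--amplitude regime this dependence is continuous and that $\cQ^+/\abs{\cQ^-}$ sweeps out $(0,\infty)$ as $\kappa_+/\kappa_-$ does, so that an intermediate--value argument selects the quotient equal to $c$, the overall magnitude staying free as only the ratio and positivity are prescribed in (i).

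Finally, (ii) is a scaling symmetry and needs no further analysis. Starting from a solution $(f^+,f^-,\varphi)$ for $B_b$ as in (i), I would put $f^\beta_\lambda(x,v)=\lambda^{-1}f^\beta(x,\lambda^{-1}v)$, $\varphi_\lambda(x)=\lambda^2\varphi(x)$, and choose $B_{\lambda b}=\lambda B_b$, which is admissible since its poloidal part equals $\lambda b B^{x_0}$. A direct substitution shows that the powers of $\lambda$ balance so that $(f^+_\lambda,f^-_\lambda,\varphi_\lambda)$ solves the stationary Vlasov equations with $B_{\lambda b}$ together with \eqref{eq:traditional_P} and \eqref{eq:trad_BC}. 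The spatial supports are unchanged, the velocity supports dilate to $B_{\lambda\varepsilon_\pm}(0)$, and the change of variables $w=\lambda^{-1}v$ gives $\cQ^\beta_\lambda=\lambda^{-1}\lambda^3\cQ^\beta=\lambda^2\cQ^\beta$, as asserted.
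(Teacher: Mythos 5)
Your first integrals are correct (your $p_\phi^\beta$ is, up to the constant factor $-c/q_\beta$, exactly the integral $I_b^\beta$ the paper uses), your momentum-cutoff confinement estimate is sound, and your part (ii) is right — in fact it coincides with the paper's scaling $\psi^\beta_\lambda(E,I)=\lambda^{-1}\psi^\beta(\lambda^{-2}E,\lambda^{-1}I)$, which written in the variables $(x,v)$ is precisely your $f^\beta_\lambda(x,v)=\lambda^{-1}f^\beta(x,\lambda^{-1}v)$. Part (i), however, has two genuine gaps.

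First, the $S^1$-invariance of $\varphi$. Your $p_\phi^\beta$ is a first integral only when the potential is axisymmetric (otherwise $-q_\beta\nabla_x\varphi$ exerts a toroidal torque), so your ansatz solves the Vlasov equation only for $S^1$-invariant $\varphi$. Ak\^o's theorem applied on $\Omega$ gives you \emph{some} solution of the semilinear Poisson problem; even though the nonlinearity $n^\beta(x,s)$ is $S^1$-invariant in $x$, a symmetric problem can have nonsymmetric solutions, and nothing in your argument forces the solution you obtain to be invariant. You must either reduce to the cross-section, as the paper does (Lemma \ref{lem:toroidally_reduced_system} turns the system into one on $Q\subset\R^2$ with operator $\partial_r^2+\partial_z^2+r^{-1}\partial_r$, so that invariance is automatic after lifting back), or invoke a selection principle such as the paper's envelope theorem (Theorem \ref{thm:envelope_theorem}: the maximal solution between the barriers is invariant). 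Neither appears in your plan, and without one of them the Vlasov equation is simply not verified.

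Second, the charge ratio, which you yourself flag as the main obstacle, is left essentially unproven, and the missing idea is not mere "continuity in the small-amplitude regime" but \emph{uniqueness}. For a fixed pair $(\kappa_+,\kappa_-)$, Ak\^o's theorem may produce many solutions $\varphi$, so $\cQ^\pm$ are not even well-defined functions of the amplitudes and an intermediate-value argument cannot start. The paper resolves this in Lemma \ref{lem:continuous_family} by imposing a monotonicity condition on the cutoff ($\partial_E\psi\leq 0$, hence $\partial_u\hat{\rho}\leq 0$) together with a coercivity estimate for $-L$ (checked with weight $r$ for the toroidal operator); this gives uniqueness of $\varphi$ for each amplitude parameter, continuity of the family in $H^1_0$, hence continuity of the charges, and then a maximum-principle argument showing $\cQ^+_\lambda>0$ and $\cQ^-_\lambda<0$ strictly at intermediate parameters, so the ratio genuinely sweeps $(0,\infty)$. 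A related but repairable flaw: your constant barriers $\pm\Phi_0$ are not supplied by "the standard $L^\infty$ a priori estimate" — an a priori bound on solutions is not a sub-/supersolution. Constants are barriers only because of the energy cutoff: you need $q_+\Phi_0\geq e_+$ and $\abs{q_-}\Phi_0\geq e_-$ so that $n^+(\cdot,\Phi_0)=0$ and $n^-(\cdot,-\Phi_0)=0$. This incidentally shows $\norm{\varphi}_{\infty}\leq\max\set{e_+/q_+,\,e_-/\abs{q_-}}$, which is small whenever the energy cutoffs are small, so the smallness of the amplitudes $\kappa_\pm$ in your closing step is unnecessary.
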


Concerning this Theorem we like to point out that the toroidal part of the magnetic field does not play a role for the existence of confined stationary solutions. Moreover, as the analysis in Section \ref{sec:toroidal_sym} will show, a purely toroidal field, which would be the analogue to the constant magnetic field used in \cite{SkubUMN,Knopf} for the cylindrical case, cf. Section \ref{sec:infinite_cylinder}, can not be used to guarantee the existence of stationary solutions with spatial supports strictly contained in $\Omega$. 
These two observations for the Vlasov-Poisson description of a tokamak plasma are in agreement with the general physical understanding: ``Thus, in terms of simple force balance, the poloidal field does most of the work in tokamak confinement. The toroidal field enhances stability, as well as improving thermal insulation.'' -- \cite[Section 1.5]{HazeltineMeiss}.

However, besides existence alone, the question in which sense the toroidal part influences the stability of the found solutions is a different one, which as the broader question concerning stability in general is not addressed in the present paper.

Part (i) of Theorem \ref{thm:toroidal_solutions} shows that the strength of the external magnetic field, which corresponds to the size of the parameter $b>0$, is not important if one is only interested in the existence of some stationary solutions with supports in a prescribed region. Only if one wants to confine a given amount of plasma, measured in terms of the total charges $\cQ^\pm$, a sufficiently strong magnetic field becomes crucial, cf. part (ii).
In fact combining the two parts of Theorem \ref{thm:toroidal_solutions} one obtains
\begin{corollary}
To any choice of $\delta_\pm\in(0,\dist(x_0,\partial\Omega))$, $c_+>0$, $c_-<0$ there exists $b>0$, such that \eqref{eq:traditional_V}, \eqref{eq:traditional_P}, \eqref{eq:trad_BC} considered with the magnetic field $B_b$ has a stationary solution $(f^+,f^-,\varphi)$ with $\supp f^\pm\subset\subset \cO_{\delta_\pm}(x_0)\times\R^3$ and $\cQ^\pm=c_\pm$.
\end{corollary}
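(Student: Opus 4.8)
The plan is to obtain the Corollary purely by combining the two parts of Theorem~\ref{thm:toroidal_solutions}, with no new analysis required. The guiding observation is a counting of degrees of freedom: part (i) lets us prescribe the \emph{ratio} of the two total charges while keeping the field strength fixed, and part (ii) rescales both charges by a common factor $\lambda^2$ through the field strength, a scaling that necessarily \emph{preserves} that ratio. Having fixed the ratio first and the scale second, we then have exactly the two parameters needed to hit an arbitrary admissible pair $(c_+,c_-)$ with $c_+>0>c_-$.

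Concretely, I would first fix an auxiliary field strength $b_0:=1$, choose arbitrary $\varepsilon_\pm>0$, and select radii $\delta_\pm'\in(0,\delta_\pm)$ strictly below the prescribed ones so that the supports end up \emph{compactly} contained at the end. Setting the ratio parameter $c:=c_+/\abs{c_-}=c_+/(-c_-)>0$, part (i) produces an $S^1$-invariant stationary solution $(f^+,f^-,\varphi)$ for the field $B_{b_0}$ with $\supp f^\pm\subset\cO_{\delta_\pm'}(x_0)\times B_{\varepsilon_\pm}(0)$ and total charges $Q^\pm:=\cQ^\pm$ obeying $Q^+=c\abs{Q^-}>0$; in particular $Q^+>0$ and $Q^-<0$. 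Next I would invoke part (ii) with $\lambda:=\sqrt{c_+/Q^+}\in(0,\infty)$, which is well defined since $Q^+>0$. This yields a stationary solution $(f^+_\lambda,f^-_\lambda,\varphi_\lambda)$ for the field $B_{\lambda b_0}=B_b$, where $b:=\lambda b_0$, with $\supp f^\pm_\lambda\subset\cO_{\delta_\pm'}(x_0)\times B_{\lambda\varepsilon_\pm}(0)$ and charges $\cQ^\pm_\lambda=\lambda^2 Q^\pm$. By the choice of $\lambda$ one has $\cQ^+_\lambda=\lambda^2 Q^+=c_+$, while the preserved ratio gives $\cQ^-_\lambda=\frac{c_+}{Q^+}Q^-=\frac{c_+}{c\abs{Q^-}}Q^-=\frac{\abs{c_-}}{\abs{Q^-}}Q^-=c_-$, using $c_+/c=\abs{c_-}$ and $\abs{Q^-}=-Q^-$. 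Finally, since $\delta_\pm'<\delta_\pm<\dist(x_0,\partial\Omega)$ and $\Omega$ is $S^1$-invariant, the compact set $\overline{\cO_{\delta_\pm'}(x_0)}$ lies in $\cO_{\delta_\pm}(x_0)\subset\Omega$, and the velocity support is the bounded ball $B_{\lambda\varepsilon_\pm}(0)$; hence $\supp f^\pm_\lambda\subset\subset\cO_{\delta_\pm}(x_0)\times\R^3$, as required.

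I expect no genuine obstacle, as all existence work is already carried out in Theorem~\ref{thm:toroidal_solutions}; the only point demanding care is that the \emph{single} scalar $\lambda$ must correct \emph{both} total charges at once. This succeeds precisely because part (ii) multiplies $\cQ^+$ and $\cQ^-$ by the identical factor $\lambda^2$ and therefore cannot alter their quotient — which is exactly why the quotient had to be matched in advance via the parameter $c$ in part (i). The remaining subtlety is merely one of sign: because $q_-<0$ and $f^-\geq 0$ force $\cQ^-<0$ automatically, matching the magnitude $\abs{c_-}$ is equivalent to matching the (negative) value $c_-$ itself.
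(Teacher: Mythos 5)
Your proposal is correct and takes essentially the same approach the paper intends: the Corollary is stated there as an immediate consequence of "combining the two parts of Theorem \ref{thm:toroidal_solutions}", namely fixing the ratio $c=c_+/\abs{c_-}$ via part (i) and then matching the magnitudes with the scaling $\lambda^2$ of part (ii), exactly as you do. Your extra shrinking to $\delta_\pm'<\delta_\pm$ is harmless but not needed, since the supports produced by part (i) are already closed and bounded, hence compact, and contained in the open set $\cO_{\delta_\pm}(x_0)\times\R^3$, which by itself gives $\supp f^\pm_\lambda\subset\subset \cO_{\delta_\pm}(x_0)\times\R^3$.
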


Note also that in particular if $\cQ^+\neq \abs{\cQ^-}$, the electric potential $\varphi$ is non-trivial. Moreover, the results extend to the extreme cases $\cQ^+=0$, $\cQ^-<0$ and $\cQ^+>0$, $\cQ^-=0$, i.e. we also find confined stationary solutions for the two different one-component systems modelling a plasma consisting only of ions or electrons.

A similar result holds true for solutions in an infinite and finite cylinder, which in order to avoid repetition we do not formulate here, but refer to the corresponding Sections \ref{sec:infinite_cylinder}, \ref{sec:finite_cylinder} instead.

The paper is organized as follows. In Section \ref{sec:general_VP} we consider a generalized Vlasov-Poisson system and prove the existence of stationary solutions with controlled velocity support under quite mild conditions. Section  \ref{sec:further_properties} still treats a general system but under additional assumptions allowing now also a confinement of the spatial supports, see \ref{sec:confinement_in_space}, as well as an investigation of the relation between the total charges and the strength of the magnetic field, see \ref{sec:scaling_of_total_charges} and \ref{sec:ratio_of_charges}. After this we turn to the application of these general results to specific settings. We begin in Section \ref{sec:toroidal_sym} with toroidally symmetric solutions including the proof of Theorem \ref{thm:toroidal_solutions}, continue with the infinite cylinder in Section \ref{sec:infinite_cylinder},  and end in Section \ref{sec:finite_cylinder} with Mirror trap type solutions in a finite cylinder.

\section{A generalized Vlasov-Poisson system}\label{sec:general_VP}
We consider on a smooth bounded domain $Q\subset\R^n$ a Vlasov-Poisson type system for $N\in\N$ types of particles with charge $q_\beta\in\R\setminus\{0\}$ and mass $m_\beta>0$, $\beta=1,\ldots,N$. On the time interval $[0,T)$ the distribution of the particles of type $\beta\in\{1,\ldots,N\}$ is described by a density distribution function $f^\beta:\overline{Q}\times\R^m\times[0,T)\rightarrow[0,\infty)$, $(x,v,t)\mapsto f^\beta(x,v,t)$  obeying the Vlasov type equation
\begin{equation}\label{eq:generalized_vlasov_equation}
\partial_tf^\beta+\ska{M^\beta v,\nabla_xf^\beta}_{\R^n}-\frac{q_\beta}{m_\beta}\ska{\nabla_x\varphi,M^\beta \nabla_v f^\beta}_{\R^n}+\ska{F^\beta,\nabla_vf^\beta}_{\R^m}=0,
\end{equation}
where $M^\beta:\overline{Q}\times\R^m\times[0,T)\rightarrow\R^{n\times m}$, $(x,v,t)\mapsto M^\beta(x,v,t)$, $F^\beta:\overline{Q}\times\R^m\times[0,T)\rightarrow\R^m$, $(x,v,t)\mapsto F^\beta(x,v,t)$ are given continuous functions, $\beta=1,\ldots,N$ and $\varphi:\overline{Q}\times[0,T)\rightarrow\R$ is the self-consistent generalized electric potential satisfying
\begin{equation}\label{eq:generalized_poisson_equation}
-L\varphi=4\pi\sum_{\beta=1}^Nq_\beta\rho^\beta \text{ in }Q\times [0,T)\quad\text{and}\quad \varphi=g\text{ on }\partial Q\times[0,T).
\end{equation}
Here $L=a_{ij}(x)\partial_{x_i}\partial_{x_j}+b_i(x)\partial_{x_i}$ with $a_{ij},b_i:\overline{Q}\rightarrow\R$, $1\leq i,j\leq n$, is a second order differential operator, $g:\overline{Q}\times[0,T)\rightarrow\R$ is given boundary data and the functions $\rho^\beta:\overline{Q}\times[0,T)\rightarrow[0,\infty)$ are the spatial densities induced by $f^\beta$, i.e.,  for $\beta=1,\ldots,N$ we have
\begin{equation}\label{eq:definition_of_spatial_densities}
\rho^\beta(x,t):=\int_{\R^m}f^\beta(x,v,t)\:dv.
\end{equation}
\begin{remark}
a) The usual two-component Vlasov-Poisson system in the domain $Q$ and with an external magnetic field $B:\overline{Q}\times[0,T)\rightarrow\R^3$, as stated in the introduction is obtained by setting $N=2$, $q_1=q_-<0$, $q_2=q_+>0$, as well as $n=m=3$, $M^\beta\equiv \id_{\R^3}$, $L=\Delta=\partial_{x_1}^2+\partial_{x_2}^2+\partial_{x_3}^2$, $g\equiv 0$. The external magnetic field $B:\overline{Q}\times[0,T)\rightarrow\R^3$ acts on the system via the Lorentz force $F^\beta(x,v,t)=\frac{q_\beta}{cm_\beta}v\times B(x,t)$, where $c>0$ denotes the speed of light.

b) Some parts of the here considered generalizations have purely mathematical reasons, while some other parts are needed in our applications, e.g. the asymmetry between spatial and velocity dimensions or the deviation of $L$ from being simply the Laplace operator naturally appear when investigating solutions in a domain with toroidal symmetry, cf. Section \ref{sec:toroidal_sym}.

c) Concerning further generalizations, we can even allow $M^\beta$ and $F^\beta$ to depend on $\varphi$ or on $(f^\beta)_{\beta=1}^N$ as the proof of our abstract Theorem \ref{thm:abstract_theorem} shows. This way control terms can be included in the model. Also $L$ can be an even more complicated second order differential operator as long as it satisfies the conditions of Ak\^{o}'s paper \cite{ako_dirichlet_1961}, on which our proof relies.
\end{remark}

\subsection{Existence of stationary solutions}
We are interested in classic stationary solutions to the system \eqref{eq:generalized_vlasov_equation}--\eqref{eq:definition_of_spatial_densities}, by which we mean the following.
\begin{definition}\label{def:stationary_solution} A stationary solution to the generalized Vlasov-Poisson system is a tuple $(f^1,\ldots,f^N,\varphi)$ of time independent functions $f^\beta\in\cC^1(\overline{Q}\times\R^m)$, $f^\beta\geq 0$, $\varphi\in\cC^2(\overline{Q})$, such that each $\rho^\beta=\int_{\R^m}f^\beta(\cdot,v)\:dv$ is continuous on all of $\overline{Q}$ and such that these functions satisfy the equations \eqref{eq:generalized_vlasov_equation} (with $\partial_tf^\beta=0$), $\beta =1,\ldots,N$, and the boundary value problem  \eqref{eq:generalized_poisson_equation}.
\end{definition}

Stationary solutions of course can only exist provided the boundary data $g$ is independent of time $t$, whereas a time dependence in the functions $M^\beta$ and $F^\beta$ is still allowed.

The strategy to find stationary solutions  also here is based on the well-known method, see \cite{BattFaltenbacherHorst,Batt2019,Pokhozhaev,SkubUMN,Vedenyapin1,Vedenyapin2,
Weber}, of exploiting first integrals of the characteristic system
\begin{align}\label{eq:generalized_characteristic_system}
\dot{x}=M^\beta(x,v,t)v,\quad \dot{v}=-\frac{q_\beta}{m_\beta}(M^\beta(x,v,t))^T\nabla_x\varphi(x,t)+F^\beta(x,v,t)
\end{align}
associated with \eqref{eq:generalized_vlasov_equation}. Here $(M^\beta)^T$ denotes the transposed matrix. Indeed, if for fixed $\varphi\in\cC^2(\overline{\Omega}\times[0,T))$ a function $I\in\cC^1(\overline{Q}\times\R^m)$ is constant along solutions of \eqref{eq:generalized_characteristic_system}, then $I$ solves the Vlasov equation \eqref{eq:generalized_vlasov_equation} considered with that $\varphi$.

\begin{definition}\label{def:set_of_all_integrals} For $\beta=1,\ldots,N$, we define $\cI^\beta$ to be the set of all $\cC^1$ functions $I^\beta:\overline{Q}\times\R^m\times\R\rightarrow\R$, $(x,v,u)\mapsto I^\beta(x,v,u)$, such that for all stationary potentials $\varphi\in\cC^2(\overline{Q})$ the function $\overline{Q}\times\R^m\ni(x,v)\mapsto I^\beta(x,v,\varphi(x))\in\R$ is a  first integral to \eqref{eq:generalized_characteristic_system} considered with that $\varphi$.
\end{definition}

A natural candidate for a first integral of \eqref{eq:generalized_characteristic_system} is the energy
$
\frac{1}{2}m_\beta\abs{v}^2+q_\beta\varphi(x),
$
which corresponds to the $x$-independent function $E^\beta:\R^m\times\R\rightarrow\R$,
\[
E^\beta(v,u)=\frac{1}{2}m_\beta\abs{v}^2+q_\beta u.
\]
Indeed simple differentiation shows
\begin{lemma}\label{lem:energy_is_first_integral}
Under the condition
\begin{equation}\label{eq:condition_on_F}
\ska{F^\beta(x,v,t),v}_{\R^m}=0 \text{ for all }(x,v,t)\in\overline{Q}\times\R^m\times[0,T),
\end{equation}
there holds $E^\beta\in\cI^\beta$, $\beta=1,\ldots,N$.
\end{lemma}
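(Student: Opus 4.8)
The plan is to verify the defining property of $\cI^\beta$ directly: fix an arbitrary stationary potential $\varphi\in\cC^2(\overline{Q})$ and show that the composed function $(x,v)\mapsto E^\beta(v,\varphi(x))=\tfrac12 m_\beta\abs{v}^2+q_\beta\varphi(x)$ has vanishing derivative along every solution $t\mapsto(x(t),v(t))$ of the characteristic system \eqref{eq:generalized_characteristic_system} considered with that $\varphi$. Since $E^\beta$ is manifestly $\cC^1$ and $\varphi\in\cC^2$, the composition is $\cC^1$ in $(x,v)$, so the only thing left to check is that it is constant along characteristics.

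First I would differentiate along a characteristic using the chain rule, obtaining
\[
\frac{d}{dt}E^\beta(v(t),\varphi(x(t)))=m_\beta\ska{v,\dot v}_{\R^m}+q_\beta\ska{\nabla_x\varphi(x),\dot x}_{\R^n}.
\]
Then I would substitute the two halves of \eqref{eq:generalized_characteristic_system}. Inserting $\dot v=-\frac{q_\beta}{m_\beta}(M^\beta)^T\nabla_x\varphi+F^\beta$ into the first term and $\dot x=M^\beta v$ into the second yields
\[
m_\beta\ska{v,\dot v}_{\R^m}=-q_\beta\ska{v,(M^\beta)^T\nabla_x\varphi}_{\R^m}+m_\beta\ska{v,F^\beta}_{\R^m},\qquad q_\beta\ska{\nabla_x\varphi,\dot x}_{\R^n}=q_\beta\ska{\nabla_x\varphi,M^\beta v}_{\R^n}.
\]

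The key step is that the adjoint relation $\ska{v,(M^\beta)^T\nabla_x\varphi}_{\R^m}=\ska{M^\beta v,\nabla_x\varphi}_{\R^n}$ makes the two potential-gradient contributions cancel exactly; this is precisely why the transpose $(M^\beta)^T$ was built into the velocity equation of \eqref{eq:generalized_characteristic_system}. What remains is the single term $m_\beta\ska{v,F^\beta}_{\R^m}$, which vanishes for all $(x,v,t)$ by the hypothesis \eqref{eq:condition_on_F}. Hence the total derivative is zero along every characteristic, so $E^\beta(\cdot,\varphi(\cdot))$ is a first integral for every stationary $\varphi$, i.e. $E^\beta\in\cI^\beta$. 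Since the cancellation is structural and the force term is removed by the assumed $v$-orthogonality, there is no genuine analytic obstacle here; the entire content of the lemma is the observation that the (Lorentz-type) force appearing in the applications is perpendicular to $v$, which is exactly condition \eqref{eq:condition_on_F}.
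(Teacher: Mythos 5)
Your proof is correct and is exactly the "simple differentiation" the paper alludes to (the paper omits the details entirely): differentiate $E^\beta(v(t),\varphi(x(t)))$ along a characteristic, cancel the two gradient terms via the adjoint identity $\ska{v,(M^\beta)^T\nabla_x\varphi}_{\R^m}=\ska{M^\beta v,\nabla_x\varphi}_{\R^n}$, and kill the force term by \eqref{eq:condition_on_F}. Nothing is missing.
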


Note that condition \eqref{eq:condition_on_F} for example holds true in the case that $F^\beta$ is given by a Lorentz force $F^\beta(x,v,t)=\frac{q_\beta}{cm_\beta}v\times B(x,t)$.

In order to solve the generalized Poisson equation \eqref{eq:generalized_poisson_equation} we will rely on the method of sub- and supersolutions. In particular we will use the main theorem of the article \cite{ako_dirichlet_1961} by Ak\^o. The result presented there fits very well the search for potentials $\varphi$ of class $\cC^2$. For other aspects of the sub-/supersolution method in the investigation of nonlinear Poisson equations, for example in the context of weak solutions, we refer the reader to \cite{MontenegroPonce} and the references therein. We now state conditions on the differential operator $L$ and the boundary data $g$ that allow us to apply Ak\^o's result. In fact compared to the actual formulation in \cite{ako_dirichlet_1961} we consider a slightly simpliefied setting. As stated before we assume that $L$ has the form
\begin{equation}\label{eq:form_of_L}
L=\sum_{i,j=1}^na_{ij}(x)\partial_{x_i}\partial_{x_j}+\sum_{i=1}^nb_i(x)\partial_{x_i}.
\end{equation}
We require the coefficients to be H\"older continuous, i.e.,
\begin{equation}\label{eq:condition_on_differential_op_1}
a_{ij},b_i\in\cC^{0,\tau}(\overline{Q})\text{ for some }\tau\in(0,1)\text{ for all }1\leq i,j\leq n,
\end{equation}
and assume that
\begin{equation}\label{eq:condition_on_differential_op_2}
\text{$L$ is elliptic},
\end{equation}
which means that for any $x\in\overline{Q}$ the matrix $A(x):=[a_{ij}(x)]_{i,j=1}^n\in\R^{n\times n}$ is symmetric and positive definite. 
As a last condition we require the boundary data $g$ to be independent of $t$ and with H\"older continuous second derivatives, i.e.,
\begin{equation}\label{eq:condition_on_differential_op_3}
g\in\cC^{2,\tau}(\overline{Q})\text{ for some }\tau\in(0,1).
\end{equation}

Under these conditions ``The Main Theorem'' in Ak\^o \cite{ako_dirichlet_1961} for a simplified setting can be formulated as follows.
\begin{theorem}[Ak\^o {\cite[p. 52]{ako_dirichlet_1961}}]\label{thm:ako}
Let $\hat{\rho}:\overline{Q}\times\R\rightarrow\R$ be continuous and of class $\cC^{0,\tau}$ for some $\tau\in(0,1)$ on compact subsets of $\overline{Q}\times\R$. Consider the semilinear problem
\begin{equation}\label{eq:ako_poisson_problem}
-L\varphi=\hat{\rho}(\cdot,\varphi)\text{ in }Q,\quad \varphi=g\text{ on }\partial Q
\end{equation}
with $L$ of the form \eqref{eq:form_of_L} satisfying \eqref{eq:condition_on_differential_op_1}, \eqref{eq:condition_on_differential_op_2} and $g$ satisfying \eqref{eq:condition_on_differential_op_3}. If there exist $\overline{\varphi},\underline{\varphi}\in\cC^2(\overline{Q})$ such that $\underline{\varphi}(x)\leq \overline{\varphi}(x)$ for all $x\in\overline{Q}$ and
\begin{align*}
-L\overline{\varphi}\geq\hat{\rho}(\cdot,\overline{\varphi})\text{ in }Q,\quad \overline{\varphi}\geq g\text{ on }\partial Q,\quad
-L\underline{\varphi}\leq \hat{\rho}(\cdot,\underline{\varphi})\text{ in }Q,\quad \underline{\varphi}\leq g\text{ on }\partial Q,
\end{align*}
then \eqref{eq:ako_poisson_problem} has a solution $\varphi\in\cC^2(\overline{Q})$ with $\underline{\varphi}(x)\leq \varphi(x)\leq \overline{\varphi}(x)$ for all $x\in \overline{Q}$.
\end{theorem}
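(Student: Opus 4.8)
The plan is to prove Theorem \ref{thm:ako} by the classical three-step scheme: truncate the nonlinearity so that it becomes globally bounded, produce a classical solution of the truncated problem by a fixed-point argument resting on linear elliptic theory, and finally invoke the weak maximum principle to show this solution lies between $\underline{\varphi}$ and $\overline{\varphi}$, at which point the truncation becomes inactive and the original equation is recovered. I deliberately avoid the naive monotone iteration $-L\varphi_{n+1}+K\varphi_{n+1}=\hat{\rho}(\cdot,\varphi_n)+K\varphi_n$: since $\hat{\rho}$ is assumed only H\"older (not Lipschitz or $\cC^1$) in its second argument, the shifted map $u\mapsto\hat{\rho}(x,u)+Ku$ need not be nondecreasing, so no monotone scheme is available in general. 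Compactness will play the role that monotonicity would otherwise play. (Under the stronger hypothesis that $\hat{\rho}$ is $\cC^1$ in $u$, the monotone iteration would be the cleaner alternative.)

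First I would truncate. Setting $m_0:=\min_{\overline{Q}}\underline{\varphi}$, $M_0:=\max_{\overline{Q}}\overline{\varphi}$, define the clamped nonlinearity
\[
\tilde{\rho}(x,u):=\hat{\rho}\bigl(x,\max\set{\underline{\varphi}(x),\min\set{\overline{\varphi}(x),u}}\bigr),
\]
which is globally defined and continuous, coincides with $\hat{\rho}(x,u)$ whenever $\underline{\varphi}(x)\leq u\leq\overline{\varphi}(x)$, and obeys the uniform bound $\abs{\tilde{\rho}(x,u)}\leq\max_{\overline{Q}\times[m_0,M_0]}\abs{\hat{\rho}}=:C_0$ for all $(x,u)\in\overline{Q}\times\R$.

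Second, I would solve the truncated semilinear problem by a fixed point. Define $\Phi:\cC^0(\overline{Q})\to\cC^0(\overline{Q})$ by letting $\Phi(w)$ be the solution $\varphi$ of the \emph{linear} Dirichlet problem $-L\varphi=\tilde{\rho}(\cdot,w)$ in $Q$, $\varphi=g$ on $\partial Q$. Because $L$ is elliptic with $\cC^{0,\tau}$ coefficients and has no zeroth-order term, the homogeneous problem has only the trivial solution (maximum principle), and classical $L^p$ theory for nondivergence operators gives, for every $p<\infty$, a unique $\varphi\in W^{2,p}(Q)$ with $\norm{\varphi}_{W^{2,p}}\leq C(\norm{\tilde{\rho}(\cdot,w)}_{L^p}+\norm{g}_{\cC^{2,\tau}})$. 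Since $\norm{\tilde{\rho}(\cdot,w)}_{L^\infty}\leq C_0$ uniformly in $w$, choosing $p>n$ and using Sobolev embedding bounds $\Phi(w)$ in $\cC^{1,\alpha}(\overline{Q})$ \emph{independently of} $w$. Hence $\Phi$ maps all of $\cC^0(\overline{Q})$ into a fixed bounded subset of $\cC^{1,\alpha}(\overline{Q})$, which is compact and convex in $\cC^0(\overline{Q})$; continuity of $\Phi$ follows from the continuity of the linear solution operator together with $w\mapsto\tilde{\rho}(\cdot,w)$, $\cC^0\to L^p$. Schauder's fixed-point theorem then yields $\varphi$ with $\Phi(\varphi)=\varphi$. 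A single bootstrap finishes the regularity: $\varphi\in\cC^{1,\alpha}$ makes $\tilde{\rho}(\cdot,\varphi)$ H\"older continuous, so Schauder estimates upgrade $\varphi$ to $\cC^{2,\tau}(\overline{Q})\subset\cC^2(\overline{Q})$, a classical solution of the truncated equation.

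Third, I would remove the truncation. On the open set $D:=\set{x\in Q:\varphi(x)>\overline{\varphi}(x)}$ the clamp is active, so $-L\varphi=\hat{\rho}(\cdot,\overline{\varphi})$ there, whereas $-L\overline{\varphi}\geq\hat{\rho}(\cdot,\overline{\varphi})$ on all of $Q$ by hypothesis; subtracting gives $L(\varphi-\overline{\varphi})\geq 0$ in $D$, and $\varphi-\overline{\varphi}\leq 0$ on $\partial D$ (it vanishes on $\partial D\cap Q$ by continuity and equals $g-\overline{\varphi}\leq 0$ on $\partial D\cap\partial Q$). Since $L$ carries no zeroth-order term, the weak maximum principle forces $\varphi-\overline{\varphi}\leq 0$ throughout $D$, contradicting the definition of $D$ unless $D=\emptyset$; thus $\varphi\leq\overline{\varphi}$, and the symmetric argument gives $\underline{\varphi}\leq\varphi$. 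On $\underline{\varphi}\leq\varphi\leq\overline{\varphi}$ the clamp is inactive, so $\tilde{\rho}(\cdot,\varphi)=\hat{\rho}(\cdot,\varphi)$ and $\varphi$ solves \eqref{eq:ako_poisson_problem} with the asserted bounds. The main obstacle is the fixed-point/regularity step: the merely H\"older dependence of $\hat{\rho}$ on $u$ rules out the elementary monotone construction and forces the compactness route, where one must secure \emph{uniform} a priori estimates (hence the detour through $L^p$ rather than working directly in H\"older spaces, whose seminorms would blow up with $w$) and then correctly invoke the maximum principle for the non-self-adjoint operator $L$.
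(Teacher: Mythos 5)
Your proposal is correct, and its skeleton coincides with the paper's proof in Appendix A: the identical clamped nonlinearity $\tilde{\rho}$, a Schauder fixed-point argument for the truncated problem, and the same weak-maximum-principle argument on $\set{\varphi>\overline{\varphi}}$ (using that $L$ has no zeroth-order term and that the clamp freezes the right-hand side at $\hat{\rho}(\cdot,\overline{\varphi})$ there) to remove the truncation. Where you genuinely diverge is in the functional-analytic realization of the fixed point. The paper stays entirely within Schauder theory: it runs the fixed point in $\cC^{0,\tau}(\overline{Q})$, which forces it to confront the composition operator $\varphi\mapsto\tilde{\rho}(\cdot,\varphi)$ in H\"older norms — this map is \emph{not} continuous into $\cC^{0,\tau^2}$, so the paper needs an interpolation lemma (its Lemma \ref{lem:A2}, continuity only into $\cC^{0,\alpha}$ for $\alpha<\tau^2$) plus a growth estimate $\norm{\tilde{\rho}(\cdot,\varphi)}_{\cC^{0,\tau^2}}\leq C(1+\norm{\varphi}_{\cC^{0,\tau}})^\tau$ (Lemma \ref{lem:A1}), whose sublinearity in $R$ produces the invariant ball. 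You instead run the fixed point in $\cC^{0}(\overline{Q})$ and exploit the global $L^\infty$ bound on $\tilde{\rho}$ — which the paper's invariant-ball argument never uses — so that $W^{2,p}$ estimates with $p>n$ and Sobolev embedding give a $w$-independent $\cC^{1,\alpha}$ bound at once, and continuity of $w\mapsto\tilde{\rho}(\cdot,w)$ from $\cC^0$ to $L^p$ is elementary (uniform continuity of $\tilde{\rho}$ plus the Lipschitz clamp). The price of your route is heavier linear machinery (Calder\'on--Zygmund theory for nondivergence operators with continuous coefficients, Aleksandrov-type uniqueness for $p\geq n$) and a final Schauder bootstrap to reach $\cC^{2,\tau}$, which the paper gets for free since its fixed point already lands in $\cC^{2,\alpha}$; the payoff is that you avoid the H\"older-composition subtleties entirely. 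Both arguments are complete and deliver the solution pinched between $\underline{\varphi}$ and $\overline{\varphi}$; your remark that a monotone iteration is unavailable because $\hat{\rho}$ is merely H\"older in $u$ is also accurate and explains why both you and the paper resort to compactness.
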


The function $\overline{\varphi}$, $\underline{\varphi}$ resp., is what is called a supersolution, subsolution resp., of \eqref{eq:ako_poisson_problem}. Note that with respect to the original formulation in \cite{ako_dirichlet_1961} we do not need Ak\^o's ``$L$ being of complete type condition''. For the convenience of the reader we have added a corresponding proof of Theorem \ref{thm:ako} in Appendix \ref{sec:appendix}.

We are now ready to state our result for the general system \eqref{eq:generalized_vlasov_equation}, \eqref{eq:generalized_poisson_equation}, \eqref{eq:definition_of_spatial_densities} concerning the existence of a wide class of stationary solutions with in general non-trivial electric potential $\varphi$.

\begin{theorem}\label{thm:abstract_theorem}
Assume that the conditions \eqref{eq:condition_on_F}--\eqref{eq:condition_on_differential_op_2}, \eqref{eq:condition_on_differential_op_3} are satisfied. For every $\beta=1,\ldots,N$, let $l_\beta
\in\N\cup\{0\}$, $E_0^\beta\in\R$ and $\psi^\beta\in\cC^1(\R\times\R^{l_\beta})$, $\psi^\beta\geq 0$, such that
\begin{equation}\label{eq:cutoff_condition}
\psi^\beta(E,I)=0 \text{ for all }(E,I)\in\R\times\R^{l_\beta}\text{ with }E\geq E_0^\beta.
\end{equation}
Furthermore, for each $\beta$, let $I_1^\beta,\ldots,I_{l_\beta}^\beta\in\cI^\beta$ be a collection of $l_\beta$ first integrals. Then there exists a stationary solution $(f^1,\ldots,f^N,\varphi)$ in the sense of Definition
\ref{def:stationary_solution}, such that
\begin{enumerate}[(i)]
\item\label{item:f_beta} $f^\beta$ is given by
\[
f^\beta(x,v)=\psi^\beta\left(E^\beta(v,\varphi(x)),I_1^\beta(x,v,\varphi(x)),\ldots,I_{l_\beta}^\beta(x,v,\varphi(x))\right).
\]
\item\label{item:bounds_on_potential} The potential $\varphi$ satisfies $\underline{c}\leq \varphi(x)\leq \overline{c}$ for all $x\in\overline{Q}$, where $\underline{c},\overline{c}$ are the constants
\begin{align*}
\underline{c}&:=\min\set{\min_{x\in\partial Q}g(x),\min\set{q_\beta^{-1}E_0^\beta:q_\beta<0}},\\
\overline{c}&:=\max\set{\max_{x\in\partial Q}g(x),\max\set{q_\beta^{-1}E_0^\beta:q_\beta>0}}.
\end{align*}
\item\label{item:velocity_support} For $\beta=1,\ldots,N$, define $R^\beta:\R\rightarrow[0,\infty)$,
$
R^\beta(u)=\sqrt{2\big(E^\beta_0-q_\beta u\big)_+/m_\beta},
$
where $t_+:=\max\set{t,0}$, $t\in\R$.
Then $f^\beta(x,v)=0$, $(x,v)\in\overline{Q}\times\R^m$ provided $\abs{v}\geq R^\beta(\varphi(x))$. In particular
\begin{align*}
\supp f^\beta\subset \overline{Q}\times \overline{B_{R^\beta(\underline{c})}(0)}\text{ for all }\beta\text{ with }q_\beta>0,\\
\supp f^\beta\subset \overline{Q}\times \overline{B_{R^\beta(\overline{c})}(0)}\text{ for all }\beta\text{ with }q_\beta<0.
\end{align*}
\end{enumerate}
\end{theorem}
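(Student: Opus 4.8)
The plan is to decouple the system by imposing the ansatz of \ref{item:f_beta} for $f^\beta$, which reduces the search for a full stationary solution to the search for a single self-consistent potential $\varphi$ solving a semilinear Dirichlet problem of exactly the form \eqref{eq:ako_poisson_problem}; existence of $\varphi$ is then supplied by Ak\^o's Theorem \ref{thm:ako}, and the remaining assertions are read off afterwards. Concretely, for \emph{any} fixed $\varphi\in\cC^2(\overline{Q})$ the function $f^\beta$ in \ref{item:f_beta} is, by Lemma \ref{lem:energy_is_first_integral} (which gives $E^\beta\in\cI^\beta$) together with $I_j^\beta\in\cI^\beta$, a composition of first integrals, hence itself constant along solutions of \eqref{eq:generalized_characteristic_system}; by the observation following \eqref{eq:generalized_characteristic_system} it therefore solves the stationary Vlasov equation. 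Thus the only equation left to satisfy is Poisson's.

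Next I would introduce the reduced nonlinearity. Since $f^\beta(x,v)$ depends on $x$ both directly and through $\varphi(x)$, the density $\rho^\beta(x)=\int_{\R^m}f^\beta(x,v)\,dv$ can be written as $\rho^\beta(x)=\hat\rho^\beta(x,\varphi(x))$, where
\[
\hat\rho^\beta(x,u):=\int_{\R^m}\psi^\beta\!\left(E^\beta(v,u),I_1^\beta(x,v,u),\ldots,I_{l_\beta}^\beta(x,v,u)\right)dv,\qquad \hat\rho(x,u):=4\pi\sum_{\beta=1}^N q_\beta\,\hat\rho^\beta(x,u),
\]
so that Poisson's equation becomes $-L\varphi=\hat\rho(\cdot,\varphi)$ in $Q$, $\varphi=g$ on $\partial Q$. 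The cutoff condition \eqref{eq:cutoff_condition} renders this integral harmless: the integrand vanishes unless $E^\beta(v,u)=\tfrac12 m_\beta\abs{v}^2+q_\beta u<E_0^\beta$, i.e. unless $\abs{v}<R^\beta(u)$, so for $(x,u)$ in any compact set the $v$-integration runs over a fixed bounded ball. Because $\psi^\beta\in\cC^1$ vanishes together with its derivatives for $E\geq E_0^\beta$ and the $I_j^\beta$ are $\cC^1$, the integrand is $\cC^1$ in $(x,u)$ with locally bounded derivatives, and differentiation under the integral sign shows $\hat\rho$ to be $\cC^1$, hence locally Lipschitz, on compact subsets of $\overline{Q}\times\R$; in particular $\hat\rho$ meets the continuity and local $\cC^{0,\tau}$ hypotheses of Theorem \ref{thm:ako}.

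The crux is to exhibit ordered sub- and supersolutions, and the point is that the \emph{constants} $\underline\varphi\equiv\underline c$ and $\overline\varphi\equiv\overline c$ from \ref{item:bounds_on_potential} already work, precisely because \eqref{eq:cutoff_condition} is tuned to the definitions of $\underline c,\overline c$. Since $L$ has no zeroth-order term, it annihilates constants, so $-L\overline\varphi=-L\underline\varphi=0$. For the supersolution, every $\beta$ with $q_\beta>0$ satisfies $q_\beta\overline c\geq E_0^\beta$ by definition of $\overline c$, whence $E^\beta(v,\overline c)\geq E_0^\beta$ for all $v$ and so $\hat\rho^\beta(\cdot,\overline c)\equiv 0$; the surviving terms of $\hat\rho(\cdot,\overline c)$ then all carry $q_\beta<0$ times the nonnegative factor $\hat\rho^\beta\geq 0$, giving $-L\overline\varphi=0\geq\hat\rho(\cdot,\overline c)$, while $\overline c\geq\max_{x\in\partial Q}g(x)$ settles the boundary inequality. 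Symmetrically, every $\beta$ with $q_\beta<0$ satisfies $q_\beta\underline c\geq E_0^\beta$, killing those densities, so $\hat\rho(\cdot,\underline c)\geq 0=-L\underline\varphi$, and $\underline c\leq\min_{x\in\partial Q}g(x)$ settles the boundary; the ordering $\underline c\leq\min_{\partial Q}g\leq\max_{\partial Q}g\leq\overline c$ is automatic.

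Finally I would apply Theorem \ref{thm:ako} to produce $\varphi\in\cC^2(\overline{Q})$ with $\underline c\leq\varphi\leq\overline c$, which is assertion \ref{item:bounds_on_potential}. Inserting this $\varphi$ into \ref{item:f_beta} yields $f^\beta\in\cC^1(\overline{Q}\times\R^m)$, $f^\beta\geq 0$ (as $\psi^\beta\geq 0$), solving Vlasov by the first-integral argument of the first paragraph, with $\rho^\beta=\hat\rho^\beta(\cdot,\varphi)$ continuous on $\overline{Q}$; hence $(f^1,\ldots,f^N,\varphi)$ is a stationary solution in the sense of Definition \ref{def:stationary_solution}, proving \ref{item:f_beta}. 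Assertion \ref{item:velocity_support} is immediate from the support computation above: $f^\beta(x,v)=0$ once $\abs{v}\geq R^\beta(\varphi(x))$, and since $R^\beta(\cdot)$ is nonincreasing when $q_\beta>0$ and nondecreasing when $q_\beta<0$, the bounds $\underline c\leq\varphi\leq\overline c$ upgrade this to the stated uniform inclusions. I expect the only genuinely technical step to be the regularity of $\hat\rho$ — justifying differentiation under the integral sign and the local Hölder bound feeding Ak\^o's theorem — whereas the conceptual heart, the choice of constant sub-/supersolutions, is essentially forced by the cutoff condition and needs only the sign bookkeeping indicated above.
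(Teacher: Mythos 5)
Your proposal is correct and follows essentially the same route as the paper's own proof: the ansatz via first integrals, reduction to the semilinear problem $-L\varphi=\hat\rho(\cdot,\varphi)$, the regularity of $\hat\rho$ via the cutoff and differentiation under the integral, the constants $\underline{c},\overline{c}$ as sub-/supersolution pair for Ak\^o's theorem, and the monotonicity of $R^\beta$ for the support bounds. Nothing is missing.
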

\begin{proof}
Let $\psi^\beta$ and all the $I^\beta_i\in\cI^\beta$ be as stated and define $\hat{f}^\beta:\overline{Q}\times\R^m\times\R\rightarrow\R$,
\[
\hat{f}^\beta(x,v,u)=\psi^\beta\left(E^\beta(v,u),I^\beta_1(x,v,u),\ldots,I^\beta_{l_\beta}(x,v,u)\right).
\]
Then, by definition of the sets $\cI^\beta$ and due to Lemma \ref{lem:energy_is_first_integral}, we have that for any $\varphi\in\cC^2(\overline{Q})$ the function $f^\beta_\varphi:\overline{Q}\times\R^m\rightarrow [0,\infty)$, $f^\beta_\varphi(x,v)=\hat{f}^\beta(x,v,\varphi(x))$ is a stationary solution to the Vlasov equation \eqref{eq:generalized_vlasov_equation} considered with that $\varphi$. I.e., there holds
\[
\ska{M^\beta v,\nabla_x f^\beta_\varphi}_{\R^n}-\frac{q_\beta}{m_\beta}\ska{\nabla_x\varphi,M^\beta\nabla_vf^\beta_\varphi}_{\R^n}+\ska{F^\beta,\nabla_vf^\beta_\varphi}_{\R^m}=0
\]
on $\overline{Q}\times\R^m\times[0,T)$ and for every $\varphi\in\cC^2(\overline{Q})$. Recall that $M^\beta$ and $F^\beta$ are allowed to depend on $t\in[0,T)$.

It therefore remains to solve the generalized Poisson equation \eqref{eq:generalized_poisson_equation}. With our ansatz for $f^\beta_\varphi$ this equation becomes the following semilinear boundary value problem
\begin{equation}\label{eq:nonlinear_poisson_equation}
-L\varphi=4\pi\sum_{\beta=1}^Nq_\beta\hat{\rho}^\beta(\cdot,\varphi)\text{ in Q},\quad \varphi=g\text{ on }\partial Q,
\end{equation}
where $\hat{\rho}^\beta:\overline{Q}\times\R\rightarrow\R$, $\hat{\rho}^\beta(x,u)=\int_{\R^m}\hat{f}^\beta(x,v,u)\:dv$.

Let us first show that $\hat{\rho}^\beta$ is well-defined and of class $\cC^1$. By the cutoff condition \eqref{eq:cutoff_condition} we have that $\hat{f}^\beta(x,v,u)=0$ for $E^\beta(x,v,u)=\frac{1}{2}m_\beta\abs{v}^2+q_\beta u\geq E^\beta_0$. Thus for fixed $x\in\overline{Q}$, $u\in\R$ we only integrate in the definition of $\hat{\rho}^\beta(x,u)$ over the open ball $B_{R^\beta(u)}(0)\subset\R^m$ with radius $R^\beta(u)$ defined in \ref{item:velocity_support}. The continuous differentiability of $\hat{\rho}^\beta$ follows via Lebesgue's dominated convergence theorem from the fact that $\hat{f}^\beta$ is $\cC^1$ on all of $\overline{Q}\times\R^m\times\R$ and that $\hat{f}^\beta$ as well as all first order partial derivatives of $\hat{f}^\beta$ are bounded on subsets with bounded $u$-component.

In order to use Theorem \ref{thm:ako} with
$
\hat{\rho}:=4\pi\sum_{\beta=1}^Nq_\beta\hat{\rho}^\beta
$
as the right-hand side in \eqref{eq:ako_poisson_problem}, it therefore only remains to find a suitable sub-/supersolution pair. As the notation already suggests this will be the constants $\underline{c},\overline{c}\in\R$ defined in \ref{item:bounds_on_potential}.

Clearly we have
$
\underline{c}\leq g(x)\leq \overline{c}
$ for all $x\in\partial Q$. Moreover, the cutoff condition \eqref{eq:cutoff_condition}, together with the definition of $\underline{c},\overline{c}$, implies
\[
\hat{\rho}(x,\overline{c})=4\pi\sum_{\set{\beta:q_\beta<0}}q_\beta\hat{\rho}^\beta(x,\overline{c})\leq 0=(-L\overline{c})(x)
\]
for every $x\in\overline{Q}$. Thus $\overline{c}$ is a supersolution. Similarly one sees that $\hat{\rho}(\cdot,\underline{c})\geq 0$ and hence $\underline{c}$ is a subsolution. Therefore Theorem \ref{thm:ako} provides us with $\varphi\in\cC^2(\overline{Q})$ solving \eqref{eq:nonlinear_poisson_equation} and satisfying \ref{item:bounds_on_potential}. Clearly $f^\beta$ now is defined as $\hat{f}^\beta(\cdot,\cdot,\varphi)$, such that $(f^1,\ldots,f^N,\varphi)$ is the desired stationary solution. Property \ref{item:f_beta} holds by definition.

Concerning property \ref{item:velocity_support} we have already seen before that $\hat{f}^\beta(x,v,u)=0$ for $\abs{v}\geq R^\beta(u)$. Hence $f^\beta(x,v)=0$ for $\abs{v}\geq R^\beta(\varphi(x))$, which in the case $q_\beta>0$ is satisfied when $\abs{v}\geq R^\beta(\underline{c})$, whereas in the case $q_\beta<0$, the condition $\abs{v}\geq R^\beta(\overline{c})$ is sufficient.
\end{proof}

\subsection{A refinement including symmetries}\label{sec:including_symmetries}
In some situations, see Sections \ref{sec:infinite_cylinder}, \ref{sec:finite_cylinder}, a function $I^\beta(x,v,\varphi(x))$ is a first integral of the characteristic system \eqref{eq:generalized_characteristic_system} when considered with potentials $\varphi$ having a specific symmetry. We still can find a stationary solution in that situation provided the semilinear problem \eqref{eq:nonlinear_poisson_equation} allows such symmetric solutions to exist. In order to select a symmetric solution we use the following ``Envelope Theorem''.
\begin{theorem}[Ak\^o {\cite[p. 55]{ako_dirichlet_1961}}]\label{thm:envelope_theorem}
In the situation of Theorem \ref{thm:ako} let $\Phi$ be the set of all $\varphi\in\cC^2(\overline{Q})$ solving \eqref{eq:ako_poisson_problem} and satisfying $\underline{\varphi}(x)\leq \varphi(x)\leq \overline{\varphi}(x)$, $x\in\overline{Q}$. Then also the functions
\[
\varphi_{sup}(x):=\sup_{\varphi\in\Phi}\varphi(x),\quad\varphi_{inf}(x):=\inf_{\varphi\in\Phi}\varphi(x)
\]
belong to $\Phi$.
\end{theorem}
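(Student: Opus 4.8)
The plan is to deduce the statement from the existence of a \emph{largest} and a \emph{smallest} element of $\Phi$. Indeed, if $\varphi_{\max}\in\Phi$ satisfies $\varphi\le\varphi_{\max}$ for every $\varphi\in\Phi$, then $\varphi_{sup}\le\varphi_{\max}$ by definition of the pointwise supremum, while $\varphi_{\max}\le\varphi_{sup}$ because $\varphi_{\max}$ is itself one of the competitors; hence $\varphi_{sup}=\varphi_{\max}\in\Phi$. The assertion for $\varphi_{inf}$ follows dually from a smallest element $\varphi_{\min}$. So the entire task reduces to constructing a maximal and a minimal solution lying between $\underline{\varphi}$ and $\overline{\varphi}$.

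To build $\varphi_{\max}$ I would run the monotone iteration underlying Theorem \ref{thm:ako}. Fix a constant $K\ge 0$ large enough that $u\mapsto\hat{\rho}(x,u)+Ku$ is nondecreasing on the compact range $[\min_{\overline{Q}}\underline{\varphi},\max_{\overline{Q}}\overline{\varphi}]$, and set $\varphi^{(0)}:=\overline{\varphi}$. Given $\varphi^{(k)}$, let $\varphi^{(k+1)}$ be the unique $\cC^{2,\tau}(\overline{Q})$ solution of the \emph{linear} Dirichlet problem $(-L+K)\varphi^{(k+1)}=\hat{\rho}(\cdot,\varphi^{(k)})+K\varphi^{(k)}$ in $Q$, $\varphi^{(k+1)}=g$ on $\partial Q$, which exists by linear Schauder theory since the right-hand side is H\"older continuous and $-L+K$ is elliptic with nonnegative zeroth order coefficient. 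Using the weak maximum principle for $-L+K$ together with the super- and subsolution inequalities, one checks inductively that $\underline{\varphi}\le\cdots\le\varphi^{(k+1)}\le\varphi^{(k)}\le\cdots\le\overline{\varphi}$. The sequence is thus monotone and uniformly bounded; as the right-hand sides stay uniformly bounded (their arguments lie in a fixed compact interval and $\hat{\rho}$ is continuous), global Schauder estimates bound $\varphi^{(k)}$ in $\cC^{2,\tau}(\overline{Q})$, and Arzel\`a--Ascoli combined with the monotonicity yields convergence $\varphi^{(k)}\to\varphi_{\max}$ in $\cC^2(\overline{Q})$. Passing to the limit shows $\varphi_{\max}\in\Phi$.

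The crux is the comparison $\varphi\le\varphi_{\max}$ for an arbitrary $\varphi\in\Phi$, which I would obtain by showing $\varphi\le\varphi^{(k)}$ for all $k$. The base case is $\varphi\le\overline{\varphi}=\varphi^{(0)}$. For the step, assuming $\varphi\le\varphi^{(k)}$, monotonicity of $u\mapsto\hat{\rho}(x,u)+Ku$ gives $(-L+K)\varphi^{(k+1)}=\hat{\rho}(\cdot,\varphi^{(k)})+K\varphi^{(k)}\ge\hat{\rho}(\cdot,\varphi)+K\varphi=(-L+K)\varphi$, while $\varphi^{(k+1)}=g=\varphi$ on $\partial Q$; the maximum principle for $-L+K$ then forces $\varphi^{(k+1)}\ge\varphi$. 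Letting $k\to\infty$ gives $\varphi\le\varphi_{\max}$. An entirely symmetric iteration started from $\underline{\varphi}$ produces $\varphi_{\min}$ with $\varphi_{\min}\le\varphi$ for all $\varphi\in\Phi$, and the reduction of the first paragraph concludes the proof.

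The step I expect to be most delicate is the monotonicity requirement on $\hat{\rho}$: Theorem \ref{thm:ako} assumes $\hat{\rho}$ only locally H\"older in $u$, which need not permit a single $K$ making $\hat{\rho}(x,\cdot)+K\,\cdot$ nondecreasing. In the applications of this paper $\hat{\rho}$ is in fact $\cC^1$ in $u$, so such a $K$ exists and the scheme above applies verbatim. In the general H\"older setting one instead argues that $\Phi$ is a lattice in a generalized sense: for $\varphi_1,\varphi_2\in\Phi$ the pointwise maximum $\max(\varphi_1,\varphi_2)$, although merely Lipschitz along the crossing set, is a distributional subsolution of $-L\varphi=\hat{\rho}(\cdot,\varphi)$ (the convexity of the maximum contributes a measure of the correct sign) lying below $\overline{\varphi}$, so Ak\^o's existence mechanism yields a solution in $\Phi$ dominating both. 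This upward directedness, together with separability of $\overline{Q}$ and the same Schauder compactness, again produces $\varphi_{sup}\in\Phi$, and dually $\varphi_{inf}$. Checking that Ak\^o's construction genuinely accepts such continuous generalized sub- and supersolutions is the only nontrivial point of this variant.
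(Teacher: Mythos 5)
Your reduction to a largest/smallest element and your final compactness argument are sound, but both routes you offer for producing the directedness have a genuine gap, and it sits exactly at the points you yourself flag. The monotone iteration needs a constant $K$ such that $u\mapsto\hat{\rho}(x,u)+Ku$ is nondecreasing, i.e.\ a one-sided Lipschitz bound in $u$; Theorem \ref{thm:ako} only assumes $\hat{\rho}$ is of class $\cC^{0,\tau}$ on compact sets with $\tau<1$, and for instance $\hat{\rho}(x,u)=-\abs{u}^{\tau}$ admits no such $K$ near $u=0$. So the iteration proves the theorem only under a hypothesis the statement does not contain. Your fallback via upward/downward directedness of $\Phi$ is the right idea, but its core step --- that $\max(\varphi_1,\varphi_2)$ can be fed into ``Ak\^o's existence mechanism'' as a subsolution --- is precisely what remains unproven: Theorem \ref{thm:ako}, as proved in Appendix \ref{sec:appendix}, requires sub-/supersolutions of class $\cC^2(\overline{Q})$, since they are compared with the solution of the truncated problem through the weak maximum principle. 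Moreover, for the non-divergence operator $L=a_{ij}\partial_{x_i}\partial_{x_j}+b_i\partial_{x_i}$ with merely H\"older coefficients a ``distributional subsolution'' does not make straightforward sense (the formal adjoint requires differentiable coefficients); one would need viscosity-solution machinery, which is well beyond what is set up here.

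The paper closes this gap differently, see Remark \ref{rem:envelope_proof}: given finite families of classical subsolutions $\underline{\varphi}_1,\ldots,\underline{\varphi}_l$ and supersolutions $\overline{\varphi}_1,\ldots,\overline{\varphi}_m$, one never forms a nonsmooth sub- or supersolution at all. Instead one modifies the truncation \eqref{eq:definition_of_rho_tilde} to
\[
\tilde{\rho}(x,r):=\hat{\rho}\left(x,\max\Big\{\max_j\underline{\varphi}_j(x),\,\min\big\{\min_j\overline{\varphi}_j(x),\,r\big\}\Big\}\right),
\]
which is still bounded and H\"older continuous (the max/min of finitely many $\cC^2$ functions is Lipschitz), runs the identical Schauder fixed-point argument, and then compares the resulting solution with each smooth $\underline{\varphi}_j$, $\overline{\varphi}_j$ \emph{individually} via the weak maximum principle on the sets $\set{\varphi>\overline{\varphi}_j}$, $\set{\varphi<\underline{\varphi}_j}$. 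Since every element of $\Phi$ is simultaneously a classical sub- and supersolution, this yields the directedness of $\Phi$ with purely classical tools; combined with the uniform $\cC^{0,\tau}(\overline{Q})$ bound on $\Phi$ coming out of the fixed-point proof, your Arzel\`a--Ascoli/Schauder limiting argument over a countable dense set then goes through and gives $\varphi_{sup},\varphi_{inf}\in\Phi$. If you replace your distributional-subsolution step by this truncation trick, your fallback variant becomes a complete proof and coincides in substance with the paper's argument.
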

A proof of the ``Envelope Theorem'' in this formulation is indicated in Remark \ref{rem:envelope_proof}.

Let now $G$ be a group acting on $\R^n$. Denote the action by $\theta*x$, $\theta\in G$, $x\in \R^n$. We assume that for all $\theta\in G$ and $\varphi\in\cC^2(\overline{Q})$ there holds 
\begin{equation}\label{eq:symmetry_assumptions}
\theta*Q=Q,\quad g(\theta*\cdot)=g,\quad L(\varphi(\theta*\cdot))=(L\varphi)(\theta*\cdot).
\end{equation}
Moreover, let $\cC^2_{sym}(\overline{Q}):=\set{\varphi\in\cC^2(\overline{Q}):\varphi(\theta*\cdot)=\varphi}$ and define in analogy to Definition \ref{def:set_of_all_integrals} $\cI^\beta_{sym}$ to be the set of all functions $I^\beta:\overline{Q}\times\R^m\times\R\rightarrow \R$, such that for all $\varphi\in\cC^2_{sym}(\overline{Q})$ the map $(x,v)\mapsto I^\beta(x,v,\varphi(x))$ is a first integral of \eqref{eq:generalized_characteristic_system} considered with that $\varphi$.

\begin{theorem}\label{thm:general_theorem_with_symmtersi}
Given the symmetry assumptions \eqref{eq:symmetry_assumptions} the statement of Theorem \ref{thm:abstract_theorem} remains valid when we replace $\cI^\beta$ by $\cI^\beta_{sym}$ provided the functions
\[
\hat{\rho}^\beta(x,u)=\int_{\R^m}\psi^\beta\left(E^\beta(v,u),I^\beta_1(x,v,u),\ldots,I^\beta_{l_\beta}(x,v,u)\right)\:dv,
\]
$\beta=1,\ldots,N$ are $G$-invariant, i.e. $\hat{\rho}^\beta(\theta*x,u)=\hat{\rho}^\beta(x,u)$, $x\in\overline{Q}$, $u\in\R$, $\theta\in G$.
\end{theorem}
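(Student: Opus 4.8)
The plan is to run the proof of Theorem \ref{thm:abstract_theorem} almost verbatim, the only — but essential — modification being that we must now produce a \emph{$G$-invariant} solution $\varphi$ of the semilinear Poisson problem \eqref{eq:nonlinear_poisson_equation}. First I would define $\hat{f}^\beta(x,v,u)=\psi^\beta(E^\beta(v,u),I^\beta_1(x,v,u),\ldots,I^\beta_{l_\beta}(x,v,u))$ together with the induced densities $\hat{\rho}^\beta(x,u)=\int_{\R^m}\hat{f}^\beta(x,v,u)\,dv$ exactly as before. The cutoff condition \eqref{eq:cutoff_condition} again restricts the $v$-integration to the ball $B_{R^\beta(u)}(0)$, so $\hat{\rho}^\beta$ is well-defined and of class $\cC^1$ by dominated convergence, and the constants $\underline{c},\overline{c}$ form a sub-/supersolution pair for \eqref{eq:nonlinear_poisson_equation} for precisely the same reason as in Theorem \ref{thm:abstract_theorem}. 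The crucial new point is that, because the $I^\beta_i$ now only lie in $\cI^\beta_{sym}$, the ansatz $f^\beta(x,v)=\hat{f}^\beta(x,v,\varphi(x))$ solves the Vlasov equation \eqref{eq:generalized_vlasov_equation} only when $\varphi$ is $G$-invariant; hence it is not enough to invoke Ak\^o's Theorem \ref{thm:ako} for an arbitrary solution.

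Next I would use the Envelope Theorem \ref{thm:envelope_theorem} to single out a symmetric solution. Applying Theorem \ref{thm:ako} with right-hand side $\hat{\rho}=4\pi\sum_\beta q_\beta\hat{\rho}^\beta$ yields a nonempty set $\Phi$ of solutions of \eqref{eq:nonlinear_poisson_equation} lying between $\underline{c}$ and $\overline{c}$, and Theorem \ref{thm:envelope_theorem} guarantees that $\varphi_{sup}(x)=\sup_{\varphi\in\Phi}\varphi(x)$ again belongs to $\Phi$. The remaining task is to show $\varphi_{sup}\in\cC^2_{sym}(\overline{Q})$. To this end I would check that $\Phi$ is invariant under the maps $\varphi\mapsto\varphi(\theta*\cdot)$, $\theta\in G$. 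Writing $\varphi_\theta:=\varphi(\theta*\cdot)$, the symmetry assumptions \eqref{eq:symmetry_assumptions} give $L\varphi_\theta=(L\varphi)(\theta*\cdot)$ and $\varphi_\theta=g$ on $\partial Q$, while the assumed $G$-invariance of the $\hat{\rho}^\beta$ yields $\hat{\rho}(\theta*x,\varphi_\theta(x))=\hat{\rho}(x,\varphi_\theta(x))$; combining these shows $-L\varphi_\theta=\hat{\rho}(\cdot,\varphi_\theta)$ in $Q$, and the two-sided bound $\underline{c}\le\varphi_\theta\le\overline{c}$ is immediate since $\theta*\overline{Q}=\overline{Q}$. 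Hence $\varphi_\theta\in\Phi$.

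Finally, because $G$ is a group, $\varphi\mapsto\varphi_\theta$ is a bijection of $\Phi$ onto itself with inverse $\psi\mapsto\psi_{\theta^{-1}}$, so for every $x$ and every $\theta$ there holds $\varphi_{sup}(\theta*x)=\sup_{\varphi\in\Phi}\varphi_\theta(x)=\sup_{\psi\in\Phi}\psi(x)=\varphi_{sup}(x)$; thus $\varphi_{sup}$ is $G$-invariant and, being $\cC^2$, lies in $\cC^2_{sym}(\overline{Q})$. With this symmetric potential in hand the integrals $I^\beta_i\in\cI^\beta_{sym}$ do produce first integrals of \eqref{eq:generalized_characteristic_system}, so $f^\beta:=\hat{f}^\beta(\cdot,\cdot,\varphi_{sup})$ solves \eqref{eq:generalized_vlasov_equation} and $(f^1,\ldots,f^N,\varphi_{sup})$ is a stationary solution in the sense of Definition \ref{def:stationary_solution}; properties (i)--(iii) then follow word for word as in Theorem \ref{thm:abstract_theorem}. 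The only genuine obstacle is the middle step — verifying that $\Phi$ is stable under the group action and that the envelope $\varphi_{sup}$ therefore inherits the symmetry — everything else is a transcription of the earlier argument.
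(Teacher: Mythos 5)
Your proposal is correct and follows essentially the same route as the paper: obtain solutions between the constant sub-/supersolutions via Ak\^o's theorem, observe that the symmetry assumptions and the $G$-invariance of $\hat{\rho}^\beta$ make the solution set $\Phi$ stable under $\varphi\mapsto\varphi(\theta*\cdot)$, and conclude via the Envelope Theorem that $\varphi_{sup}$ is $G$-invariant. Your argument merely spells out in full the bijection step that the paper dismisses as ``easy to see,'' which is a welcome addition rather than a deviation.
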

\begin{proof}
From the proof of Theorem \ref{thm:abstract_theorem} we know that the boundary value problem \eqref{eq:nonlinear_poisson_equation} has a $\cC^2$ solution $\varphi$ satisfying $\underline{c}\leq \varphi(x)\leq \overline{c}$, $x\in\overline{Q}$. But since $\cI^\beta$ now has been replaced by $\cI^\beta_{sym}$ we only know that the functions $f^\beta(x,v)=\hat{f}^\beta(x,v,\varphi(x))$ satisfy the Vlasov equations \eqref{eq:generalized_vlasov_equation} when $\varphi\in\cC^2_{sym}(\overline{Q})$. Thus we need to make sure that we can find a $G$-invariant solution of \eqref{eq:nonlinear_poisson_equation}. As a consequence of our assumptions observe that with $\varphi$ also all the functions $\varphi_\theta:=\varphi(\theta*\cdot)$, $\theta\in G$ solve \eqref{eq:nonlinear_poisson_equation} and satisfy $\underline{c}\leq \varphi_\theta\leq \overline{c}$. It is then easy to see that the maximal solution $\varphi_{sup}$ of \eqref{eq:nonlinear_poisson_equation} provided by Theorem \ref{thm:envelope_theorem} has to be $G$-invariant.
\end{proof}

\section{Further properties}\label{sec:further_properties}
In this section we still consider the general Vlasov-Poisson type system \eqref{eq:generalized_vlasov_equation}, \eqref{eq:generalized_poisson_equation}, \eqref{eq:definition_of_spatial_densities} with a differential operator $L$ satisfying \eqref{eq:form_of_L}, \eqref{eq:condition_on_differential_op_1}, \eqref{eq:condition_on_differential_op_2}, but in the case of a two-component plasma, i.e. we assume $N=2$, $\beta\in\set{+,-}$ and $q_-<0<q_+$. Also we impose vanishing boundary data for the potential $\varphi$, that is $g\equiv 0$, and assume that the force term $F^\beta$ satisfying \eqref{eq:condition_on_F} depends on a positive parameter $b>0$, which in our applications will represent the strength of the external magnetic field. Moreover, throughout this section we assume for $\beta=\pm$ that the parameter dependent Vlasov equation
\begin{equation}\label{eq:generalized_vlasov_equation_with_parameter}
\partial_tf+\ska{M^\beta v,\nabla_xf^\beta}_{\R^n}-\frac{q_\beta}{m_\beta}\ska{\nabla_x\varphi,M^\beta \nabla_v f^\beta}_{\R^n}+\ska{F_b^\beta,\nabla_vf^\beta}_{\R^m}=0
\end{equation}
has a first integral $I_b^\beta\in\cI^\beta_b$ of the form
\begin{align}\label{eq:form_of_additional_integral}
I_b^\beta(x,v,u)=I_b^\beta(x,v)=\frac{b}{2}\abs{x-x_0}^2+\frac{m_\beta}{q_\beta}v^T\left(A_0(x-x_0)+a_0\right),
\end{align}
for any $b>0$, where $A_0\in\R^{m\times n}$, $a_0\in\R^m$, $x_0\in Q$ are fixed. Observe that we do not specify the dependence on the parameter $b$ in the force term $F^\beta_b$ any further, but of course a first integral as in \eqref{eq:form_of_additional_integral} can only exist for certain $F^\beta_b$. For example it is well known (Noether's Theorem, see Section 8.4 in \cite{MeyerHallOffin} for instance) that the presence of symmetries leads to the existence of first integrals. Also the existence of the specific first integrals which we will use in Sections \ref{sec:toroidal_sym}, \ref{sec:infinite_cylinder}, \ref{sec:finite_cylinder} and which will be of the general form \eqref{eq:form_of_additional_integral}, are induced by a rotational symmetry of the domain and the corresponding symmetry of the used magnetic field. Note however that in our concrete  applications we prefer to directly verify that a certain function is a first integral 
of the characteristic system instead of applying Noether's Theorem.

Under the stated assumptions Theorem \ref{thm:abstract_theorem} implies the existence of stationary solutions $(f^+,f^-,\varphi)$ of \eqref{eq:generalized_vlasov_equation_with_parameter}, \eqref{eq:generalized_poisson_equation}, \eqref{eq:definition_of_spatial_densities} with $f^\pm(x,v)=\psi^\pm(E^\pm(v,\varphi(x)),I_b^\pm(x,v))$ provided $\psi^\pm$ satisfies the cutoff condition \eqref{eq:cutoff_condition}. For simplicity we consider in this section only stationary solutions depending on the two integrals $E^\beta,I_b^\beta$.
\subsection{Confinement in space}\label{sec:confinement_in_space}

Property \ref{item:velocity_support} of Theorem \ref{thm:abstract_theorem} shows that in the considered case $g\equiv 0$ the velocity support of the solutions can be controlled in terms of the cutoff parameters $E_0^\pm$.
More precisely we have $f^\pm(x,v)=0$ for $\abs{v}\geq R_0^\pm$, where
\begin{align}\label{eq:velocity_radius}
R_0^+:=\sqrt{\frac{2}{m_+}\left(E_0^+-\frac{q_+}{q_-}E_0^-\right)_+},\quad R_0^-:=\sqrt{\frac{2}{m_-}\left(E_0^--\frac{q_-}{q_+}E_0^+\right)_+}
\end{align}
The additional integral $I_b^\beta$ allows us to control the support in the spatial dimensions as Lemma \ref{lem:confinement_in_space} shows.

\begin{lemma}\label{lem:confinement_in_space} For $\beta=\pm$, let $\psi^\beta\in\cC^1(\R^2)$, $\psi^\beta\geq 0$, satisfy \eqref{eq:cutoff_condition} and in addition
\begin{equation}\label{eq:cutoff_condition_2}
\psi^\beta(E,I)=0 \text{ for all }(E,I)\in\R^2\text{ with }I\geq I_0^\beta
\end{equation}
for some constant $I_0^\beta\in\R$. In the setting of Section \ref{sec:further_properties} any stationary solution $(f^+,f^-,\varphi)$ of \eqref{eq:generalized_vlasov_equation_with_parameter}, \eqref{eq:generalized_poisson_equation}, \eqref{eq:definition_of_spatial_densities} provided by Theorem \ref{thm:abstract_theorem} satisfies $f^\beta(x,v)=0$ for $\abs{x-x_0}\geq S_0^\beta$, where
\begin{align}\label{eq:spatial_radius}
S_0^\beta:=\frac{R_0^\beta\abs{A_0}m_\beta}{b\abs{q_\beta}}+\sqrt{\left(\frac{R_0^\beta\abs{A_0}m_\beta}{b\abs{q_\beta}}\right)^2+\frac{2\left(R_0^\beta\abs{a_0}m_\beta+\abs{q_\beta}I_0^\beta\right)}{b\abs{q_\beta}}}.
\end{align}
\end{lemma}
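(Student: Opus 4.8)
The plan is to exploit that a point $(x,v)$ can lie in $\supp f^\beta$ only if both cutoff conditions are strictly inactive there. Since $\psi^\beta\geq 0$ vanishes for $E\geq E_0^\beta$ by \eqref{eq:cutoff_condition} and for $I\geq I_0^\beta$ by \eqref{eq:cutoff_condition_2}, the relation $f^\beta(x,v)=\psi^\beta\bigl(E^\beta(v,\varphi(x)),I_b^\beta(x,v)\bigr)\neq 0$ forces simultaneously $E^\beta(v,\varphi(x))<E_0^\beta$ and $I_b^\beta(x,v)<I_0^\beta$. The first of these, combined with the a~priori bounds $\underline{c}\leq\varphi\leq\overline{c}$ from property \ref{item:bounds_on_potential} of Theorem \ref{thm:abstract_theorem}, is exactly what produces the uniform velocity confinement $f^\beta=0$ for $\abs{v}\geq R_0^\beta$ already recorded in \eqref{eq:velocity_radius}, which I would simply invoke. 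The whole point is then to convert the second inequality $I_b^\beta<I_0^\beta$ into a bound on $\abs{x-x_0}$ by inserting the velocity bound into the explicit form \eqref{eq:form_of_additional_integral} of $I_b^\beta$.

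Concretely, I would set $r:=\abs{x-x_0}$ and estimate the $v$-linear term of $I_b^\beta$ by Cauchy--Schwarz, using $\abs{v}<R_0^\beta$, to obtain $\bigl|\tfrac{m_\beta}{q_\beta}v^T(A_0(x-x_0)+a_0)\bigr|\leq \tfrac{m_\beta R_0^\beta}{\abs{q_\beta}}\bigl(\abs{A_0}\,r+\abs{a_0}\bigr)$, where $\abs{A_0}$ denotes the operator norm. Since the quadratic term $\tfrac{b}{2}r^2$ is nonnegative and enters with the favourable sign, this yields the lower bound $I_b^\beta(x,v)\geq \tfrac{b}{2}r^2-\tfrac{m_\beta R_0^\beta\abs{A_0}}{\abs{q_\beta}}\,r-\tfrac{m_\beta R_0^\beta\abs{a_0}}{\abs{q_\beta}}$. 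Feeding this into $I_b^\beta<I_0^\beta$ leaves a single scalar quadratic inequality $h(r)<0$ in $r\geq 0$, with the upward-opening parabola $h(r)=\tfrac{b}{2}r^2-\tfrac{m_\beta R_0^\beta\abs{A_0}}{\abs{q_\beta}}\,r-\bigl(\tfrac{m_\beta R_0^\beta\abs{a_0}}{\abs{q_\beta}}+I_0^\beta\bigr)$.

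Finally, I would solve this quadratic. Because $b>0$ the parabola opens upwards, so $h(r)<0$ can only hold for $r$ strictly below the largest root of $h$, and a direct computation with the quadratic formula identifies that root with the quantity $S_0^\beta$ in \eqref{eq:spatial_radius}. The contrapositive then gives $f^\beta(x,v)=0$ whenever $\abs{x-x_0}\geq S_0^\beta$, as claimed. I do not expect a genuine obstacle here, the argument being a one-variable estimate; the points that need care are the bookkeeping of constants required to match the root of $h$ with the precise closed form of $S_0^\beta$, and the use of the velocity bound in its $\varphi$-independent form $R_0^\beta$ (which is exactly what the potential bounds of Theorem \ref{thm:abstract_theorem} make available). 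I would also briefly address the degenerate case in which the radicand in \eqref{eq:spatial_radius} is negative: then $h$ has no real root and stays positive, so $h(r)<0$ is impossible and $f^\beta\equiv 0$, whence the statement holds trivially.
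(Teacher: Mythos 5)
Your proof is correct and follows the same route as the paper: the paper's own proof simply notes that $f^\beta(x,v)=0$ once $I_b^\beta(x,v)\geq I_0^\beta$, restricts to $\abs{v}\leq R_0^\beta$ using the velocity confinement, and declares that "the statement then follows in a straightforward way" — your Cauchy--Schwarz estimate and the resolution of the resulting upward-opening quadratic in $r=\abs{x-x_0}$ are exactly the omitted details, and your identification of $S_0^\beta$ with the largest root checks out.
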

\begin{proof}
By \eqref{eq:cutoff_condition_2} and the form of $f^\beta$ we know that $f^\beta(x,v)=0$ provided
\[
I_b^\beta(x,v)=\frac{b}{2}\abs{x-x_0}^2+\frac{m_\beta}{q_\beta}v^T\left(A_0(x-x_0)+a_0\right)\geq I_0^\beta.
\]
Due to the bounds on the velocity support we can without restriction assume that $\abs{v}\leq R_0^\beta$. The statement then follows in a straightforward way.
\end{proof}
Also other first integrals, which depend on $\abs{x-x_0}$ in a similar way, will lead to spatially confined solutions as well. However, stating the integral $I_b^\beta$ in \eqref{eq:form_of_additional_integral} and therefore the threshold radius $S_0^\beta$ in \eqref{eq:spatial_radius} explicitly we like to emphasize that there are two ways to control the spatial supports. If we consider $S_0^\beta$ as a function of the parameter $b>0$, Lemma \ref{lem:confinement_in_space} shows that $S_0^\beta\rightarrow 0$ as $b\rightarrow\infty$. Hence a sufficiently strong magnetic field yields solutions with small spatial support. On the other hand if we consider a fixed magnetic field, i.e. a fixed $b>0$, the expressions \eqref{eq:velocity_radius}, \eqref{eq:spatial_radius} show that we can also control $S_0^\beta$ by choosing the cutoff parameters $E_0^\beta, I_0^\beta$ sufficiently small.  For clarification we like to point out that this way of controlling the spatial supports has to be understood in a purely mathematical sense, the cutoff parameters $E_0^\beta,I_0^\beta$ do not correspond to a physical controllable quantity. That said, we still conclude with the mathematical observation that a strong magnetic field is not needed for the existence of confined stationary solutions.

\subsection{Scaling of the total charges}\label{sec:scaling_of_total_charges}
The total charge of the $\beta$th component of a stationary solution $(f^+,f^-,\varphi)$ is defined by
\begin{equation}\label{eq:definition_of_total_charge}
\cQ^\beta:=q_\beta\norm{f^\beta}_{L^1(Q\times\R^m)}.
\end{equation}
It turns out that  one can increase the total charge of both components, while keeping the same spatial confinement, simply by choosing a stronger magnetic field:  
\begin{lemma}\label{lem:scaling_lemma}
Let $(f^+,f^-,\varphi)$ be a stationary solution as in Lemma \ref{lem:confinement_in_space} of the system \eqref{eq:generalized_vlasov_equation_with_parameter}, \eqref{eq:generalized_poisson_equation}, \eqref{eq:definition_of_spatial_densities} considered with parameter $b>0$. Let $R_0^\pm$, $S_0^\pm$ be the associated radii defined in \eqref{eq:velocity_radius}, \eqref{eq:spatial_radius} controlling the supports via the cutoff parameters $E_0^\pm,I_0^\pm$ and let $\cQ^\pm$ be the associated total charges. Then for any $\lambda>0$ there exists a stationary solution $(f_\lambda^+,f_\lambda^-,\varphi_\lambda)$ of the system \eqref{eq:generalized_vlasov_equation_with_parameter}, \eqref{eq:generalized_poisson_equation}, \eqref{eq:definition_of_spatial_densities} considered with parameter $\lambda b$, having total charges $\cQ_\lambda^\pm=\lambda^2\cQ^\pm$ and such that the support of $f^\pm_\lambda$ is contained in the closure of $B_{S_0^\pm}(x_0)\times B_{\lambda R_0^\pm}(0)$.
\end{lemma}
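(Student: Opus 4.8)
The plan is to exhibit the rescaled solution explicitly and verify the three defining equations directly, rather than re-applying Theorem \ref{thm:abstract_theorem} with rescaled data: Ak\^o's theorem yields existence but not uniqueness, so a fresh application would not pin down the total charges. Starting from the solution $(f^+,f^-,\varphi)$ of Lemma \ref{lem:confinement_in_space} associated with parameter $b$ and cutoff function $\psi^\beta$, I would set
\[
\varphi_\lambda:=\lambda^2\varphi,\qquad f_\lambda^\beta(x,v):=\lambda^{2-m}f^\beta\!\left(x,\tfrac{v}{\lambda}\right),\qquad \beta=\pm,
\]
and claim $(f_\lambda^+,f_\lambda^-,\varphi_\lambda)$ is the required solution for parameter $\lambda b$.

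The exponents are dictated by homogeneity. First I would record the scaling identities
\[
E^\beta(\lambda w,\lambda^2 u)=\lambda^2 E^\beta(w,u),\qquad I_{\lambda b}^\beta(x,\lambda w)=\lambda\, I_b^\beta(x,w),
\]
which follow at once from the explicit forms of $E^\beta$ and of $I_b^\beta$ in \eqref{eq:form_of_additional_integral} (the latter being independent of $u$). The first identity is precisely why $\varphi_\lambda=\lambda^2\varphi$ is the only admissible choice: no other power makes $E^\beta$ transform homogeneously under $v\mapsto\lambda v$. Together these identities give the representation
\[
f_\lambda^\beta(x,v)=\psi_\lambda^\beta\!\left(E^\beta(v,\varphi_\lambda(x)),I_{\lambda b}^\beta(x,v)\right),\qquad \psi_\lambda^\beta(E,I):=\lambda^{2-m}\psi^\beta\!\left(\tfrac{E}{\lambda^2},\tfrac{I}{\lambda}\right),
\]
where $\psi_\lambda^\beta\in\cC^1$, $\psi_\lambda^\beta\ge 0$ and $\psi_\lambda^\beta$ vanishes for $E\ge\lambda^2E_0^\beta$ and for $I\ge\lambda I_0^\beta$, so that \eqref{eq:cutoff_condition} and \eqref{eq:cutoff_condition_2} hold with rescaled thresholds. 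Since $I_{\lambda b}^\beta\in\cI^\beta_{\lambda b}$ by the standing assumption of Section \ref{sec:further_properties}, this representation shows, exactly as in the proof of Theorem \ref{thm:abstract_theorem}, that $f_\lambda^\beta$ solves the Vlasov equation \eqref{eq:generalized_vlasov_equation_with_parameter} considered with parameter $\lambda b$ and potential $\varphi_\lambda$.

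It then remains to check the Poisson equation, and here the amplitude factor $\lambda^{2-m}$ is essential. The substitution $v=\lambda w$ in the velocity integral gives $\rho_\lambda^\beta=\lambda^2\rho^\beta$, so by linearity of $L$,
\[
-L\varphi_\lambda=\lambda^2(-L\varphi)=\lambda^2\cdot 4\pi\sum_{\beta}q_\beta\rho^\beta=4\pi\sum_\beta q_\beta\rho_\lambda^\beta,
\]
while the boundary condition $g\equiv0$ is trivially preserved by $\varphi_\lambda=\lambda^2\varphi$. The same change of variables yields $\cQ_\lambda^\beta=q_\beta\lambda^{2-m}\int f^\beta(x,v/\lambda)\,dx\,dv=\lambda^2\cQ^\beta$. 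Finally, since a positive amplitude factor does not affect the support, $f_\lambda^\beta(x,v)\neq0$ forces $f^\beta(x,v/\lambda)\neq0$, whence $\lvert x-x_0\rvert\le S_0^\beta$ and $\lvert v\rvert\le\lambda R_0^\beta$ by Lemma \ref{lem:confinement_in_space}, giving the claimed localisation.

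The only genuinely delicate point is identifying the correct scaling exponents: requiring that both invariants $E^\beta$ and $I_{\lambda b}^\beta$ be homogeneous under the phase-space rescaling forces $\varphi\mapsto\lambda^2\varphi$ together with $v\mapsto\lambda v$, and Poisson consistency then forces the amplitude renormalisation $\lambda^{2-m}$ (which is trivial precisely when $m=2$). Once these three exponents are fixed, every remaining assertion reduces to the substitution $v=\lambda w$ and the linearity of $L$.
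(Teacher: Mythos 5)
Your proposal is correct and follows essentially the same route as the paper's proof: the paper likewise sets $\varphi_\lambda=\lambda^2\varphi$, $\psi^\beta_\lambda(E,I)=\lambda^{2-m}\psi^\beta(\lambda^{-2}E,\lambda^{-1}I)$ and $f^\beta_\lambda=\psi^\beta_\lambda\left(E^\beta(\cdot,\varphi_\lambda),I^\beta_{\lambda b}\right)$, which is exactly the function $\lambda^{2-m}f^\beta(x,v/\lambda)$ you write down, and then verifies Vlasov via the first-integral representation, Poisson and the charges via the substitution $v=\lambda w$, and the supports via the rescaled cutoffs. The only cosmetic difference is that you start from the rescaled density and derive the $\psi^\beta_\lambda$-representation, whereas the paper starts from the representation; your support argument (nonvanishing of $f^\beta_\lambda$ forces nonvanishing of $f^\beta$ at $(x,v/\lambda)$) is a slightly more direct way of reaching the same conclusion.
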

\begin{proof}
Recall that the components of the solution have the form
\[
f^\beta(x,v)=\psi^\beta\left(E^\beta(v,\varphi(x)),I_b^\beta(x,v)\right)
\]
where $I^\beta_b$ is given by \eqref{eq:form_of_additional_integral}. For $\lambda>0$ we set 
\begin{gather*}
\varphi_\lambda(x):=\lambda^2\varphi(x),\quad\psi_\lambda^\beta(E,I):=\lambda^{2-m}\psi^\beta\left(\lambda^{-2}E,\lambda^{-1}I\right),\\
f^\beta_\lambda(x,v):=\psi_\lambda^\beta\left(E^\beta(v,\varphi_\lambda(x)),I^\beta_{\lambda b}(x,v)\right).
\end{gather*}
Since by our general assumption in this section $I^\beta_{\lambda b}$ is a first integral to the Vlasov equation \eqref{eq:generalized_vlasov_equation_with_parameter} considered with parameter $\lambda b$, we directly see that $f^\beta_\lambda$ solves the Vlasov equation considered with potential $\varphi_\lambda$ and it remains to check the generalized Poisson equation \eqref{eq:generalized_poisson_equation}.

Since $g=0$, we have no problem with the boundary data, and by the change of coordinates $v=\lambda^{-1}w$ and the form of $E^\beta$, $I^\beta_b$ there holds
\begin{align*}
(4\pi)^{-1}(-L\varphi_\lambda)(x)&=\lambda^2 \sum_{\beta}q_\beta\int_{\R^m}\psi^\beta\left(E^\beta(v,\varphi(x)),I^\beta_b(x,v)\right)\:dv\\
&=\lambda^{2-m}\sum_\beta q_\beta\int_{\R^m}\psi^\beta\left(\lambda^{-2}E^\beta(w,\lambda^2\varphi(x)),\lambda^{-1} I_{\lambda b}^\beta(x,w)\right)\:dw\\
&=\sum_\beta q_\beta \int_{\R^m}f^\beta_\lambda(x,v)\:dv.
\end{align*}
Via the same transformation one also finds that the total charge associated to $f^\pm_\lambda$ is given by $\lambda^2\cQ^\pm$.

Concerning the radii controlling the supports of $f^\pm_\lambda$ we have that $\psi_\lambda^\beta(E,I)=0$ for $E\geq \lambda^2 E_0^\beta$ or $I\geq \lambda I_0^\beta$. Therefore \eqref{eq:velocity_radius} implies $f^\beta_\lambda(x,v)=0$ whenever $\abs{v}\geq \lambda R_0^\beta$, while \eqref{eq:spatial_radius} implies that the spatial cutoff radius remains at its original value $S_0^\beta$. This finishes the proof of the Lemma.
\end{proof}

\begin{remark}\label{rem:scalign_of_modified_charges} One can also easily check that besides the total charge $\cQ^\pm_\lambda$, also $\norm{\nu f^\pm_\lambda}_{L^1(Q\times\R^m)}$ with $\nu\in\cC^0(\overline{Q})$, scales quadratically with respect to $\lambda>0$.
\end{remark}

As discussed in Section \ref{sec:confinement_in_space} the strength of the magnetic field is not important for the existence of stationary confined solutions. However, Lemma \ref{lem:scaling_lemma} shows that a sufficiently strong magnetic field becomes crucial if one likes to have stationary configurations with specific prescribed total charges.

\subsection{Ratio of the total charges}\label{sec:ratio_of_charges}
By the scaling in the previous section we can increase both total charges by the same factor, which is given via the change of the parameter $b>0$.
For the construction of stationary solutions with arbitrary total charges $\cQ^+$, $\cQ^-$ it therefore remains to have for a fixed $b>0$ solutions realising an arbitrary ratio $\cQ^+/\cQ^-$. To show that this is possible we construct a family of solutions connecting the two one-component extreme cases $\cQ^+=0,\cQ^-<0$ and $\cQ^+>0,\cQ^-=0$. 

From now on let $b>0$ be fixed and abbreviate $I_b^\pm(x,v)=J\left(x,\frac{m_\beta}{q_\beta}v\right)$, 
\[
J(x,w):=\frac{b}{2}\abs{x-x_0}^2+w^T\left(A_0(x-x_0)+a_0\right).
\]
Furthermore, let $\psi\in\cC^1(\R^2)$, $\psi\geq 0$, $E_0>0$, $I_0>0$ with
\begin{gather}\label{eq:charge_ratio_condition_on_psi}
\begin{gathered}
\psi(E,I)=0 \text{ for all }(E,I)\in\R^2\text{ with }E\geq E_0\text{ or }I\geq I_0,\\
\psi\text{ bounded},\quad \psi(0,0)>0,\quad \partial_E\psi(E,I)\leq 0\text{ for all }(E,I)\in\R^2
\end{gathered}
\end{gather}
and define for $\lambda\in[0,1]$ the two functions
\begin{align}\label{eq:charge_ratio_condition_on_psi_2}
\begin{split}
\psi_\lambda^+(E,I)&:=\lambda\frac{m_+^m}{q_+^{m+1}}\psi\left(\frac{m_+}{q_+^2}E,I\right),\\
\psi_\lambda^-(E,I)&:=(1-\lambda)\frac{m_-^m}{\abs{q_-}^{m+1}}\psi\left(\frac{m_-}{q_-^2}E,I\right).
\end{split}
\end{align}
Theorem \ref{thm:abstract_theorem} and Lemma \ref{lem:confinement_in_space} tell us that for each $\lambda\in[0,1]$ there exist stationary solutions $(f_\lambda^+,f^-_\lambda,\varphi_\lambda)$ of \eqref{eq:generalized_vlasov_equation_with_parameter}, \eqref{eq:generalized_poisson_equation}, \eqref{eq:definition_of_spatial_densities} with
\begin{equation}\label{eq:form_of_family_solutions}
f^\pm_\lambda(x,v)=\psi^\pm_\lambda\left(E^\pm(v,\varphi_\lambda(x)),J\left(x,\frac{m_\beta}{q_\beta}v\right)\right),~~~ \frac{q_-}{m_-}E_0\leq \varphi_\lambda(x) \leq \frac{q_+}{m_+}E_0
\end{equation}
and with spatial and velocity supports of $f^\pm_\lambda$ controlled by $E_0$ and $I_0$.

\begin{lemma}\label{lem:continuous_family}
If in addition to \eqref{eq:form_of_L}, \eqref{eq:condition_on_differential_op_1}, \eqref{eq:condition_on_differential_op_2} there exists a nonnegative weight function $l\in\cC^0(\overline{Q})$ and $c>0$, such that 
\begin{align}\label{eq:charge_ratio_condition_on_L}
c\norm{\nabla \eta}^2_{L^2(Q)}\leq \int_Q(-L\eta)(x)\eta(x)l(x)\:dx
\end{align}
for all $\eta\in\cC^2(\overline{Q})$ with $\eta_{|\partial\Omega}=0$, then for each $\lambda\in[0,1]$ there exists  only one solution $(f^+_\lambda,f^-_\lambda,\varphi_\lambda)$ of the problem  \eqref{eq:generalized_vlasov_equation_with_parameter}, \eqref{eq:generalized_poisson_equation}, \eqref{eq:definition_of_spatial_densities} with $f^\pm=f^\pm_\lambda$ given by \eqref{eq:form_of_family_solutions}. Moreover, the associated total charges $\cQ_\lambda^\pm$ depend continuously on $\lambda\in[0,1]$ and $\cQ^+_\lambda=0$ if and only if $\lambda=0$, $\cQ^-_\lambda=0$ if and only if $\lambda=1$.
\end{lemma}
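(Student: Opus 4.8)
The plan is to reduce the whole statement to properties of the semilinear Poisson problem
\[
-L\varphi = 4\pi\sum_{\beta=\pm} q_\beta \hat{\rho}^\beta_\lambda(\cdot,\varphi)\text{ in }Q,\quad \varphi = 0\text{ on }\partial Q,\qquad \hat{\rho}^\beta_\lambda(x,u)=\int_{\R^m}\psi^\beta_\lambda\!\left(E^\beta(v,u),J\!\left(x,\tfrac{m_\beta}{q_\beta}v\right)\right)dv,
\]
since by Theorem \ref{thm:abstract_theorem} solutions of the form \eqref{eq:form_of_family_solutions} correspond exactly to solutions $\varphi_\lambda$ of this problem. The structural fact I would record first is that $u\mapsto\hat{\rho}(x,u):=4\pi\sum_\beta q_\beta\hat{\rho}^\beta_\lambda(x,u)$ is non-increasing. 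Indeed, since $\partial_uE^\beta=q_\beta$ and $J$ is independent of $u$, differentiation under the integral gives $\partial_u\hat{\rho}^\beta_\lambda=q_\beta\int_{\R^m}\partial_E\psi^\beta_\lambda\,dv$, so that $q_\beta\partial_u\hat{\rho}^\beta_\lambda=q_\beta^2\int_{\R^m}\partial_E\psi^\beta_\lambda\,dv\leq 0$ by the sign condition $\partial_E\psi\leq 0$ in \eqref{eq:charge_ratio_condition_on_psi}, which is inherited by $\psi^\pm_\lambda$ through the nonnegative prefactors in \eqref{eq:charge_ratio_condition_on_psi_2}. Hence $\partial_u\hat{\rho}\leq 0$. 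For uniqueness, given two solutions $\varphi_1,\varphi_2$ for the same $\lambda$, set $\eta:=\varphi_1-\varphi_2\in\cC^2(\overline{Q})$ with $\eta_{|\partial Q}=0$; then $-L\eta=\hat{\rho}(\cdot,\varphi_1)-\hat{\rho}(\cdot,\varphi_2)$, and multiplying by $\eta\,l$ and integrating yields, by the coercivity assumption \eqref{eq:charge_ratio_condition_on_L},
\[
c\norm{\nabla\eta}^2_{L^2(Q)}\leq \int_Q(-L\eta)\,\eta\,l\,dx=\int_Q\big(\hat{\rho}(x,\varphi_1)-\hat{\rho}(x,\varphi_2)\big)\,\eta\,l\,dx\leq 0,
\]
where the last inequality uses the monotonicity of $\hat{\rho}$ in $u$ (making the integrand nonpositive) together with $l\geq 0$. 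Thus $\nabla\eta\equiv 0$, and the boundary condition forces $\eta\equiv 0$.

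For the continuous dependence I would argue by compactness. The bounds from part \ref{item:bounds_on_potential} of Theorem \ref{thm:abstract_theorem}, here reading $\tfrac{q_-}{m_-}E_0\leq\varphi_\lambda\leq\tfrac{q_+}{m_+}E_0$, are uniform in $\lambda$; since $\psi$ is bounded and the velocity supports are controlled by the $\lambda$-independent radii of \eqref{eq:velocity_radius}, the right-hand sides $\hat{\rho}(\cdot,\varphi_\lambda)$ are uniformly bounded in $L^\infty$. A standard elliptic bootstrap (Calder\'on--Zygmund estimates giving a uniform $W^{2,p}$ hence $\cC^{1,\alpha}$ bound, then Schauder estimates using that each $\hat{\rho}^\beta_\lambda$ is $\cC^1$) yields a uniform $\cC^{2,\tau}$ bound on $\varphi_\lambda$. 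Given $\lambda_k\to\lambda$, Arzel\`a--Ascoli extracts a subsequence with $\varphi_{\lambda_k}\to\tilde{\varphi}$ in $\cC^2(\overline{Q})$; because $\hat{\rho}^\beta_\lambda$ depends affinely, hence continuously, on $\lambda$ via \eqref{eq:charge_ratio_condition_on_psi_2}, passing to the limit shows $\tilde{\varphi}$ solves the $\lambda$-problem, so $\tilde{\varphi}=\varphi_\lambda$ by the uniqueness just proved. Since every subsequence has a further subsequence with the same limit, $\varphi_{\lambda_k}\to\varphi_\lambda$, and then $\cQ^\beta_\lambda=q_\beta\int_Q\hat{\rho}^\beta_\lambda(x,\varphi_\lambda(x))\,dx$ is continuous in $\lambda$ by dominated convergence.

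For the endpoints, $\lambda=0$ makes $\psi^+_0\equiv 0$ by \eqref{eq:charge_ratio_condition_on_psi_2}, so $f^+_0\equiv 0$ and $\cQ^+_0=0$; conversely I would argue by contradiction for $\lambda>0$. If $f^+_\lambda\equiv 0$, the ion density vanishes and the equation reduces to $-L\varphi_\lambda=4\pi q_-\hat{\rho}^-_\lambda(\cdot,\varphi_\lambda)\leq 0$. As $L$ carries no zeroth-order term, the weak maximum principle together with $\varphi_\lambda|_{\partial Q}=0$ forces $\varphi_\lambda\leq 0$ on $Q$, in particular $\varphi_\lambda(x_0)\leq 0$. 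Then $E^+(0,\varphi_\lambda(x_0))=q_+\varphi_\lambda(x_0)\leq 0$ and $J(x_0,0)=0$, and since $\partial_E\psi\leq 0$ with $\psi(0,0)>0$ give $\psi(s,0)\geq\psi(0,0)>0$ for all $s\leq 0$, we obtain $f^+_\lambda(x_0,0)>0$, a contradiction. The claim $\cQ^-_\lambda=0\Leftrightarrow\lambda=1$ follows symmetrically: for $\lambda<1$, assuming $f^-_\lambda\equiv 0$ gives $-L\varphi_\lambda=4\pi q_+\hat{\rho}^+_\lambda\geq 0$, so the minimum principle yields $\varphi_\lambda\geq 0$, whence $E^-(0,\varphi_\lambda(x_0))=q_-\varphi_\lambda(x_0)\leq 0$ and $f^-_\lambda(x_0,0)>0$.

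The main obstacle is the uniqueness step, which is precisely where the extra coercivity hypothesis \eqref{eq:charge_ratio_condition_on_L} is needed and where it must be combined with the monotonicity $\partial_u\hat{\rho}\leq 0$ furnished by $\partial_E\psi\leq 0$; the continuity then hinges on uniqueness through the compactness argument. The endpoint analysis is comparatively routine but relies crucially on the normalization $\psi(0,0)>0$ and on $x_0\in Q$ being an interior point at which the sign-controlled potential can be evaluated.
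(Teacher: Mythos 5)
Your proof is correct, and two of its three parts coincide with the paper's own argument: the uniqueness step (multiply the difference equation by $(\varphi_1-\varphi_2)\,l$, use the coercivity hypothesis \eqref{eq:charge_ratio_condition_on_L} together with $\partial_u\hat{\rho}\leq 0$ coming from $\partial_E\psi\leq 0$) and the endpoint analysis (maximum principle to fix the sign of $\varphi_\lambda$, then evaluation at $(x_0,0)$ where $J(x_0,0)=0$ and monotonicity plus $\psi(0,0)>0$ force a positive value of the density) are essentially verbatim what the paper does. Where you genuinely diverge is the continuity of $\lambda\mapsto\cQ^\pm_\lambda$: the paper extracts this from the \emph{same} integral inequality used for uniqueness, applied with $\lambda\neq\lambda'$ --- since $\partial_\lambda\hat{\rho}_\lambda$ is $\lambda$-independent, nonnegative and bounded, one gets the quantitative estimate $c\norm{\nabla\varphi_\lambda-\nabla\varphi_{\lambda'}}^2_{L^2(Q)}\leq C\abs{\lambda-\lambda'}$, i.e.\ Lipschitz dependence of $\varphi_\lambda$ in $H^1_0(Q)$, after which dominated convergence handles the charges. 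You instead argue softly: uniform $L^\infty$ bounds on the right-hand sides, a $W^{2,p}$--Schauder bootstrap to uniform $\cC^{2,\tau}$ bounds, Arzel\`a--Ascoli, identification of subsequential limits via your uniqueness result, and the subsequence principle. Both routes are valid; yours yields stronger ($\cC^2$) convergence of the potentials but leans on Calder\'on--Zygmund and Schauder theory and on the uniqueness step as an essential crutch, while the paper's is more elementary and self-contained, reusing the coercivity inequality to get an explicit modulus of continuity without any elliptic regularity beyond what Theorem \ref{thm:abstract_theorem} already provides.
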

\begin{proof}
The righthand side of the semilinear Poisson equation \eqref{eq:generalized_poisson_equation} $\hat{\rho}_\lambda(x,u)$ is given by
\begin{align*}
\frac{\hat{\rho}_\lambda(x,u)}{4\pi}&=\sum_\beta q_\beta\int_{\R^m}\psi^\beta_\lambda\left(\frac{m_\beta}{2}\abs{v}^2+q_\beta u,J\left(x,\frac{m_\beta}{q_\beta}v\right)\right)\:dv\\
&=\lambda\int_{\R^m}\psi\left(\frac{1}{2}\abs{w}^2+\frac{m_+}{q_+}u,J(x,w)\right)\:dw\\
&\hspace{110pt}-(1-\lambda)\int_{\R^m}\psi\left(\frac{1}{2}\abs{w}^2+\frac{m_-}{q_-}u,J(x,w)\right)\:dw,
\end{align*}
where we used \eqref{eq:charge_ratio_condition_on_psi_2} and the change of coordinates $w=\frac{m_\beta}{q_\beta}v$.
In the proof of Theorem \ref{thm:abstract_theorem} we have already seen that $\hat{\rho}_\lambda\in \cC^1(\overline{Q}\times \R)$. Now \eqref{eq:charge_ratio_condition_on_psi} implies $\partial_u\hat{\rho}_\lambda(x,u)\leq 0$ for all $(\lambda,x,u)\in[0,1]\times\overline{Q}\times\R$. Observe also that $\partial_\lambda\hat{\rho}_\lambda(x,u)$ is independent of $\lambda$ and nonnegative.

Let $\varphi,\eta\in\cC^2(\overline{Q})$ be solutions of $-L\varphi=\hat{\rho}_\lambda(\cdot,\varphi)$, $\varphi_{|\partial Q}=0$ and $-L\eta=\hat{\rho}_{\lambda'}(\cdot,\eta)$, $\eta_{|\partial Q}=0$ with $\lambda,\lambda'\in[0,1]$ and $\frac{q_-}{m_-}E_0\leq \varphi,\eta\leq \frac{q_+}{m_+}E_0$. 
In view of \eqref{eq:charge_ratio_condition_on_L} and since $\partial_u\hat{\rho}_\lambda\leq 0$ we have
\begin{align*}
c\norm{\nabla\varphi-\nabla\eta}_{L^2(Q)}^2
&\leq\int_Q( \hat{\rho}_\lambda(\cdot,\varphi)-\hat{\rho}_{\lambda}(\cdot,\eta))(\varphi-\eta)l\:dx\\
&\hspace{55pt}+\int_Q( \hat{\rho}_\lambda(\cdot,\eta)-\hat{\rho}_{\lambda'}(\cdot,\eta))(\varphi-\eta)l\:dx\\
&\leq 0 +\abs{\lambda-\lambda'}\int_Q\partial_\lambda\hat{\rho}_0(\cdot,\eta)\:dx \left(\frac{q_+}{m_+}-\frac{q_-}{m_-}\right)E_0\norm{l}_{\cC^0(\overline{Q})}.
\end{align*}
Now also $\partial_\lambda\hat{\rho}_0(x,\eta(x))$, $x\in \overline{Q}$ is bounded by a constant depending only on $\norm{\psi}_{\cC^0}$, $E_0$, $m_\pm$, $q_\pm$ and the dimension $m$. We therefore conclude two things, first of all $\varphi=\eta$ if $\lambda=\lambda'$, such that we can define a unique family $\left(\varphi_\lambda\right)_{\lambda\in[0,1]}$ of solutions, and second we see that $\lambda\mapsto \varphi_\lambda$ is continuous as a mapping from $[0,1]$ into the Sobolev space $H^1_0(Q)$. This is (more than) enough to conclude the continuity of the associated total charges $\cQ^\pm_\lambda$ via dominated convergence.

Clearly we have $\cQ^+_0=0$ and $\cQ^-_1=0$. It therefore remains to show $\cQ^+_\lambda>0$ for $\lambda\in(0,1]$ and $\cQ^-_\lambda<0$ for $\lambda\in[0,1)$. Assume to the contrary that also $\cQ^-_\lambda=0$ for some $\lambda\in [0,1)$, which implies  
\begin{equation}\label{eq:implication_of_Q_lambda_0}
\psi\left(\frac{1}{2}\abs{w}^2+\frac{m_-}{q_-}\varphi_\lambda(x),J(x,w)\right)=0
\end{equation}
for any $(x,w)\in\overline{Q}\times \R^m$ and therefore 
\[
(-L\varphi_\lambda)(x)=4\pi \lambda\int_{\R^m}\psi\left(\frac{1}{2}\abs{w}^2+\frac{m_+}{q_+}\varphi_\lambda(x),J(x,w)\right)\:dw\geq 0,
\]
for any $x\in Q$. Recall that $\varphi_{\lambda|\partial Q}=0$. Hence the weak maximum principle implies $\varphi_\lambda\geq 0$ on all of $\overline{Q}$.
On the other hand equation \eqref{eq:implication_of_Q_lambda_0} evaluated at $(x,w)=(x_0,0)$ and condition \eqref{eq:charge_ratio_condition_on_psi} yield $\frac{m_-}{q_-}\varphi_\lambda(x_0)>0$ in contradiction to $\varphi_\lambda\geq 0$. Similarly we see that $\cQ^+_\lambda>0$ for $\lambda\in(0,1]$.
\end{proof}
\begin{remark}\label{rem:continuity_of_modified_charges} It also follows from the proof that 
if we multiply the densities $f^\pm_\lambda$ by a continuous, positive function $\nu:\overline{Q}\rightarrow(0,\infty)$, then the $L^1$-norm of $\nu f^\pm_\lambda$ has the same continuity and (non)vanishing properties as $\abs{\cQ^\pm_\lambda}$.
\end{remark}

\begin{remark}\label{rem:trivial_potential_case}
The family of solutions $(f^+_\lambda,f^-_\lambda,\varphi_\lambda)$ passes at $\lambda=\frac{1}{2}$ through a solution with trivial potential, i.e., $\varphi_{\frac{1}{2}}=0$. This follows from $\hat{\rho}_{\frac{1}{2}}(x,0)=0$ and the unique solvability.
In general, stationary solutions of the two-component Vlasov-Poisson equation with trivial potential can be found by relating the cutoff functions $\psi^+$, $\psi^-$ on an algebraic level similar to \eqref{eq:charge_ratio_condition_on_psi_2}. In that case the use of the sub-/supersolution method or a fixed point argument is not needed, see for example \cite{Knopf,SkubUMN,MMNP}. Moreover, in \cite{MMNP} it has been shown that the trivial potential case allows additional first integrals of the characteristic equations to exist.
\end{remark}

\section{Solutions with toroidal symmetry}\label{sec:toroidal_sym}
We consider the two-component Vlasov-Poisson system \eqref{eq:traditional_V}, \eqref{eq:traditional_P} in a Tokamak-like domain $\Omega\subset\R^3$. For now let us treat the Poisson equation \eqref{eq:traditional_P} with general boundary condition $\varphi_{|\partial\Omega}=g$. Later in the proof of Theorem \ref{thm:toroidal_solutions} we will simply go back to  $g\equiv 0$ in order to apply the results from Section \ref{sec:further_properties}. 

Recall from the introduction that we denoted by $\theta*x$ the $S^1$-action on $\R^3$ given by rotation around the $x_3$-axis by angle $\theta\in\R/2\pi\Z$ and that the domain $\Omega$ is assumed to satisfy $S^1*\Omega=\Omega$, $\overline{\Omega}\cap \set{(0,0,x_3):x_3\in\R}=\emptyset$. 

Similar as it has been done in \cite{Knopf} in the infinite cylinder case we will examine the system on a cross section of $\Omega$. Let
\[
Q:=\set{(r,z)\in(0,\infty)\times\R:(r,0,z)\in \Omega}.
\]

Let $B:\overline{\Omega}\rightarrow\R^3$ be a $S^1$-equivariant field and $g:\overline{\Omega}\rightarrow\R$ be $S^1$-invariant boundary data. We associate $\tilde{B}:\overline{Q}\rightarrow\R^3$, $\tilde{g}:\overline{Q}\rightarrow\R$,
\begin{equation}\label{eq:reduced_magnetic_field}
\tilde{B}(r,z)=B(r,0,z),\quad \tilde{g}(r,z)=g(r,0,z),
\end{equation}
such that for all $\theta\in S^1$, $(r,z)\in Q$ there holds
\[
B(\theta*(r,0,z))=\theta*\tilde{B}(r,z),\quad g(\theta*(r,0,z))=\tilde{g}(r,z).
\]

\begin{lemma}\label{lem:toroidally_reduced_system} Let $B:\overline{\Omega}\rightarrow\R^3$ be $S^1$-equivariant and $g:\overline{\Omega}\rightarrow\R$ be $S^1$-invariant.
If $\tilde{f}^\beta\in\cC^1(\overline{Q}\times\R^3)$, $\beta=1,\ldots,N$, $\tilde{\varphi}\in\cC^2(\overline{Q})$, $(r,z,w)\mapsto\tilde{f}^\beta(r,z,w)$, $(r,z)\mapsto\tilde{\varphi}(r,z)$  solve on $Q\times\R^3$ the stationary system
\begin{align*}\label{eq:toroidally_reduced_system}
\ska{\begin{pmatrix}
w_1\\
w_3
\end{pmatrix},\begin{pmatrix}
\partial_r \tilde{f}^\beta\\
\partial_z \tilde{f}^\beta
\end{pmatrix}}_{\R^2}&-\frac{q_\beta}{m_\beta}
\ska{\begin{pmatrix}
\partial_r\tilde{\varphi}\\
\partial_z\tilde{\varphi}
\end{pmatrix},\begin{pmatrix}
\partial_{w_1}\tilde{f}^\beta\\
\partial_{w_3}\tilde{f}^\beta
\end{pmatrix}}_{\R^2}\\&
+\ska{\frac{q_\beta}{cm_\beta}w\times \tilde{B}(r,z)+\frac{w_2}{r}\begin{pmatrix}
w_2\\-w_1\\0
\end{pmatrix},
\nabla_w\tilde{f}^\beta}_{\R^3}
=0,\\
-\left(r^{-1}\partial_r(r\partial_r)+\partial_z^2\right)\tilde{\varphi}&=4\pi\sum_{\beta=1}^Nq_\beta\int_{\R^3}\tilde{f}^\beta\:dw\text{ in }Q,\quad \tilde{\varphi}=\tilde{g}\text{ on }\partial Q,
\end{align*}
then $\left(f^1,\ldots,f^N,\varphi\right)$ defined by $f^\beta:\overline{\Omega}\times\R^3\rightarrow\R$, $\varphi:\overline{\Omega}\rightarrow\R$,
\begin{align*}
f^\beta(\theta*(r,0,z),\theta*w)&:=\tilde{f}^\beta(r,z,w),\\
\varphi(\theta*(r,0,z))&:=\tilde{\varphi}(r,z)
\end{align*}
is a stationary $S^1$-invariant solution of the original Vlasov-Poisson system \eqref{eq:traditional_V}, \eqref{eq:traditional_P} on $\Omega$ with $\varphi_{|\partial\Omega}=g$.
\end{lemma}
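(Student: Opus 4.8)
The plan is to pass to cylindrical coordinates adapted to the $S^1$-action and to verify by a direct chain-rule computation that the $S^1$-invariant extension $(f^1,\dots,f^N,\varphi)$ solves the original system $\eqref{eq:traditional_V}$, $\eqref{eq:traditional_P}$ precisely because $(\tilde f^\beta,\tilde\varphi)$ solves the reduced one. Since $\overline{\Omega}$ avoids the $x_3$-axis, $r=\sqrt{x_1^2+x_2^2}$ is bounded away from $0$ on $\overline{\Omega}$, so the map $(r,\theta,z)\mapsto\theta*(r,0,z)$ is a diffeomorphism with no coordinate singularities. Writing $R_\theta$ for the rotation matrix of $\theta*\cdot$, the defining relations amount to $f^\beta(x,v)=\tilde f^\beta(r(x),z(x),w(x,v))$ with $w=R_{-\theta(x)}v$, and $\varphi(x)=\tilde\varphi(r(x),z(x))$. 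These formulas immediately give $S^1$-invariant functions of the required regularity $f^\beta\in\cC^1(\overline{\Omega}\times\R^3)$, $\varphi\in\cC^2(\overline{\Omega})$, inherited from $\tilde f^\beta,\tilde\varphi$. The essential structural point to keep in mind throughout is that $w$ depends on $x$ through the angle $\theta(x)$.

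For the Poisson part I would first note that for an $S^1$-invariant $\varphi$ all $\theta$-derivatives vanish, so the Euclidean Laplacian reduces to $\Delta\varphi=r^{-1}\partial_r(r\partial_r\tilde\varphi)+\partial_z^2\tilde\varphi$, matching the reduced operator. The substitution $v=R_\theta w$ has unit Jacobian, hence $\int_{\R^3}f^\beta(x,v)\,dv=\int_{\R^3}\tilde f^\beta(r,z,w)\,dw$, which produces an $S^1$-invariant spatial density equal to the reduced one. Together with $\varphi_{|\partial\Omega}=\tilde g_{|\partial Q}=g_{|\partial\Omega}$ this shows the reduced Poisson boundary value problem is equivalent to $\eqref{eq:traditional_P}$ with $\varphi_{|\partial\Omega}=g$.

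The core is the stationary Vlasov equation, and here the plan is to evaluate the transport operator $\ska{v,\nabla_xf^\beta}+\frac{q_\beta}{m_\beta}\ska{-\nabla_x\varphi+\frac{v\times B}{c},\nabla_vf^\beta}$ directly via the chain rule. I would record the identities $\ska{v,\nabla_xr}=w_1$, $\ska{v,\nabla_xz}=w_3$, $\ska{v,\nabla_x\theta}=w_2/r$, and $\nabla_vf^\beta=R_\theta\nabla_w\tilde f^\beta$. The magnetic term then collapses using the equivariance $B(\theta*(r,0,z))=\theta*\tilde B(r,z)$ together with $R_{-\theta}(v\times B)=w\times\tilde B(r,z)$, and the electric term using $R_{-\theta}\nabla_x\varphi=(\partial_r\tilde\varphi,0,\partial_z\tilde\varphi)$. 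Substituting all of these reproduces exactly the left-hand side of the reduced Vlasov equation, which vanishes by assumption, so $\eqref{eq:traditional_V}$ holds.

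The one genuinely delicate step — and where I expect to spend the most care — is $\ska{v,\nabla_xf^\beta}$, because $w=R_{-\theta(x)}v$ is itself $x$-dependent. Differentiating yields $\partial_{x_i}w=(\partial_{x_i}\theta)\,(w_2,-w_1,0)^T$, so the chain rule contributes the extra term $\ska{v,\nabla_x\theta}\,\ska{(w_2,-w_1,0),\nabla_w\tilde f^\beta}=\frac{w_2}{r}\ska{(w_2,-w_1,0),\nabla_w\tilde f^\beta}$. This is exactly the inertial term $\frac{w_2}{r}(w_2,-w_1,0)$ in the reduced equation; it is absent from a naive ``rotate everything'' guess and is the reason the reduced system is not merely the planar Vlasov equation on the $(r,z)$-slice. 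Checking that this term appears with the correct sign and coefficient, and that no further cross terms survive, is the crux of the argument; the remaining manipulations are routine orthogonality identities for $R_\theta\in SO(3)$.
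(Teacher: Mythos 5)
Your proposal is correct and follows essentially the same route as the paper's proof: a direct chain-rule verification in which the rotation equivariance of $B$ and of the gradients handles the electric and magnetic terms, and the inertial term $\frac{w_2}{r}(w_2,-w_1,0)^T$ is correctly identified as coming from the angular dependence of $w=R_{-\theta(x)}v$ — the paper obtains this same term by differentiating the invariance relation $f^\beta(\theta*(r,0,z),\theta*w)=\tilde f^\beta(r,z,w)$ with respect to $\theta$ at $\theta=0$, which is exactly your identity $\partial_{x_i}w=(\partial_{x_i}\theta)(w_2,-w_1,0)^T$ combined with $\ska{v,\nabla_x\theta}=w_2/r$, evaluated on the slice $x_2=0$. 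The only difference is organizational (you compute at a general point in cylindrical coordinates, the paper first reduces to the reference slice via equivariance of gradients), and your treatment of the Poisson equation and boundary condition likewise matches the paper's.
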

\begin{remark}
The transformation between solutions of the reduced system and $S^1$-invariant solutions of the full system also applies to time dependent solutions of the corresponding nonstationary equations.
\end{remark}
\begin{proof}[Proof of Lemma \ref{lem:toroidally_reduced_system}] Let us first consider the Poisson equation.
The Laplace operator in cylindrical coordinates $(r,\theta,z)$ is given by $\Delta=r^{-1}\partial_r(r\partial_r)+r^{-2}\partial_\theta+\partial_z^2$. For any $x=\theta*(r,0,z)\in\Omega$ we therefore obtain
\begin{align*}
-\Delta\varphi(\theta*(r,0,z))&=-(r^{-1}\partial_r(r\partial_r)+\partial_z^2)\tilde{\varphi}(r,z)=4\pi\sum_{\beta=1}^Nq_\beta\int_{\R^3}\tilde{f}^\beta(r,z,w)\:dw\\
&=4\pi\sum_{\beta=1}^Nq_\beta\int_{\R^3}f^\beta(\theta*(r,0,z),\theta*w)\:dw\\
&=4\pi\sum_{\beta=1}^Nq_\beta\int_{\R^3}f^\beta(\theta*(r,0,z),v)\:dv,
\end{align*}
and for $x=\theta*(r,0,z)\in \partial\Omega$, $(r,z)\in\partial Q$ we clearly have
\[
\varphi(\theta*(r,0,z))=\tilde{\varphi}(r,z)=\tilde{g}(r,z)=g(\theta*(r,0,z)).
\]

Next we turn to the Vlasov equations. Similar to before we write the arbitrary point $(x,v)\in\Omega\times\R^3$ as $x=\theta*(r,0,z)$ and $v=\theta*w$, $(r,z)\in Q$, $w\in\R^3$. The $S^1$-invariance of the functions $f^\beta$, $\varphi$ implies the equivariance for their gradients. I.e., there holds
\begin{align*}
(\nabla_xf^\beta)(\theta*(r,0,z),\theta*w)&=\theta*(\nabla_xf^\beta)(r,0,z,w),\\
(\nabla_vf^\beta)(\theta*(r,0,z),\theta*w)&=\theta*(\nabla_vf^\beta)(r,0,z,w),\\
(\nabla_x\varphi)(\theta*(r,0,z))&=\theta*(\nabla_x\varphi)(r,0,z).
\end{align*}
Moreover, we have the identities
\begin{gather*}
(\partial_{x_1}f^\beta)(r,0,z,w)=\partial_r\tilde{f}^\beta(r,z,w),\quad (\partial_{x_3}f^\beta)(r,0,z,w)=\partial_z\tilde{f}^\beta(r,z,w),\\
(\partial_{x_1}\varphi)(r,0,z)=\partial_r\tilde{\varphi}(r,z),\quad (\partial_{x_3}\varphi)(r,0,z)=\partial_z\tilde{\varphi}^\beta(r,z),\\
(\nabla_vf^\beta)(r,0,z,w)=(\nabla_w\tilde{f}^\beta)(r,z,w)
\end{gather*}
and differentiation of $f^\beta(\theta*(r,0,z),\theta*w)=\tilde{f}^\beta(r,z,w)$, $\varphi(\theta*(r,0,z))=\tilde{\varphi}(r,z)$ resp., with respect to $\theta$ at $\theta=0$ shows that
\[
(\partial_{x_2}f^\beta)(r,0,z,w)=\frac{1}{r}\ska{(\nabla_w\tilde{f}^\beta)(r,z,w),\begin{pmatrix}
w_2\\-w_1\\0
\end{pmatrix}}_{\R^3},\quad (\partial_{x_2}\varphi)(r,0,z)=0.
\]

By the invariance of the scalar product and the equivariance of the crossproduct under rotations, we therefore conclude
\begin{align*}
&\ska{\theta*w,(\nabla_xf^\beta)(\theta*(r,0,z),\theta*w)}_{\R^3}=\ska{w,(\nabla_xf^\beta)(r,0,z,w)}_{\R^3}\\
&\hspace{40pt}=\ska{\begin{pmatrix}
w_1\\w_3
\end{pmatrix},\begin{pmatrix}
\partial_r\tilde{f}^\beta(r,z,w)\\
\partial_z\tilde{f}^\beta(r,z,w)
\end{pmatrix}}_{\R^2}+\frac{w_2}{r}\ska{\begin{pmatrix}
w_2\\
-w_1\\
0
\end{pmatrix},\nabla_w\tilde{f}^\beta(r,z,w)}_{\R^3},
\end{align*}
as well as
\begin{align*}
\frac{q_\beta}{m_\beta}&\ska{-\nabla_x\varphi(\theta*(r,0,z))+c^{-1}(\theta*w)\times B(\theta*(r,0,z)),\nabla_vf^\beta(\theta*(r,0,z),\theta*w)}_{\R^3}\\
&=-\frac{q_\beta}{m_\beta}
\ska{\begin{pmatrix}
\partial_r\tilde{\varphi}(r,z)\\
\partial_z\tilde{\varphi}(r,z)
\end{pmatrix},\begin{pmatrix}
\partial_{w_1}\tilde{f}^\beta(r,z,w)\\
\partial_{w_3}\tilde{f}^\beta(r,z,w)
\end{pmatrix}}_{\R^2}\\
&\hspace{180pt}
+\frac{q_\beta}{cm_\beta}\ska{w\times \tilde{B}(r,z),
\nabla_w\tilde{f}^\beta(r,z,w)}_{\R^3}.
\end{align*}
The statement of Lemma \ref{lem:toroidally_reduced_system} follows.
\end{proof}

One immediatly sees that the reduced system in Lemma \ref{lem:toroidally_reduced_system} fits into the general setting of Section \ref{sec:general_VP} with $n=2$, $m=3$, coordinates $x=(r,z)$, $v=(w_1,w_2,w_3)$ and
\begin{gather*}\label{eq:reduced_system_fits}
M^\beta\equiv\begin{pmatrix}
1 & 0 & 0\\
0 & 0 & 1
\end{pmatrix},\quad F^\beta(r,z,w)=\frac{q_\beta}{cm_\beta}w\times\tilde{B}(r,z)+\frac{w_2}{r}\begin{pmatrix}
w_2\\
-w_1\\
0
\end{pmatrix}\\
L=\partial_r^2+\partial_z^2+r^{-1}\partial_r.
\end{gather*}
Since $r^{-1}\in \cC^{0,\tau}(\overline{Q})$, $\tau\in(0,1)$, by the form of the operator $L$ and the force $F^\beta$, the conditions \eqref{eq:condition_on_F},\eqref{eq:condition_on_differential_op_1},\eqref{eq:condition_on_differential_op_2} are satisfied. Moreover, we observe that if the support of $\tilde{f}^\beta$ is contained in the set $B_\delta(r_0,z_0)\times B_R(0)\subset \R^2\times\R^3$ for some $\delta,R>0$, then
\begin{equation}\label{eq:transformation_of_the_supports}
\supp f^\beta\subset\cO_\delta(x_0)\times B_R(0)\subset\R^3\times\R^3
\end{equation}
with $x_0=(r_0,0,z_0)$ and $\cO_\delta(x_0)$ defined in \eqref{eq:definition_of_toroidal_nbhd}. Therefore, if $\tilde{g}$ satisfies \eqref{eq:condition_on_differential_op_3}, our abstract Theorem \ref{thm:abstract_theorem} provides us via Lemma \ref{lem:toroidally_reduced_system} with infinitely many stationary solutions of \eqref{eq:traditional_V}, \eqref{eq:traditional_P} in domains with toroidal symmetry. Concerning the spatial confinement it only remains to find a first integral of the associated characteristic system
\begin{align}\label{eq:characteristic_system_in_toroidal_case}
\begin{split}
\dot{r}&=w_1,\quad \dot{z}=w_3,\\
\dot{w}&=-\frac{q_\beta}{m_\beta}
\begin{pmatrix}
\partial_r\tilde{\varphi}(r,z)\\
0\\
\partial_z\tilde{\varphi}(r,z)
\end{pmatrix}+\frac{q_\beta}{cm_\beta}w\times \tilde{B}(r,z)+\frac{w_2}{r}\begin{pmatrix}
w_2\\-w_1\\0
\end{pmatrix}.
\end{split}
\end{align}
\begin{lemma}\label{lem:existence_of_additional_first_integral_in_toroidal_case}
a) A $S^1$-equivariant field $B:\overline{\Omega}\rightarrow\R^3$ is divergence-free if and only if the reduced field $\tilde{B}:\overline{Q}\rightarrow\R^3$, $\tilde{B}\in\cC^1(\overline{Q})$, defined in \eqref{eq:reduced_magnetic_field} satisfies
\begin{equation}\label{eq:divergence_free_condition}
\partial_r(r\tilde{B}_1)+\partial_z(r\tilde{B}_3)=0.
\end{equation}
b) Condition \eqref{eq:divergence_free_condition} holds true if $\tilde{B}$ has the form
\[
\tilde{B}_1=r^{-1}\partial_z\tilde{A},\quad \tilde{B}_3=-r^{-1}\partial_r\tilde{A}
\]
for some function $\tilde{A}\in\cC^2(\overline{Q})$. Moreover, in that case $I^\beta:\overline{Q}\times\R^3\rightarrow\R$,
\[
I^\beta(r,z,w)=\tilde{A}(r,z)-\frac{cm_\beta}{q_\beta}rw_2
\]
is a first integral of \eqref{eq:characteristic_system_in_toroidal_case}.
\end{lemma}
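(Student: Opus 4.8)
The plan is to treat the two parts separately, reducing each to elementary calculus once the bookkeeping between the full field $B$ on $\overline{\Omega}$ and its reduction $\tilde{B}$ on $\overline{Q}$ is in place. For part a) I would exploit the $S^1$-equivariance exactly as in the proof of Lemma \ref{lem:toroidally_reduced_system}: differentiating the identity $B(\theta*(r,0,z))=\theta*\tilde{B}(r,z)$ with respect to $\theta$ at $\theta=0$, and using that the infinitesimal generator of the rotation sends $(r,0,z)$ to $(0,r,0)$, gives
\[
r(\partial_{x_2}B)(r,0,z)=(-\tilde{B}_2(r,z),\,\tilde{B}_1(r,z),\,0),
\]
so in particular $(\partial_{x_2}B_2)(r,0,z)=\tilde{B}_1(r,z)/r$ and $(\partial_{x_2}B_3)(r,0,z)=0$. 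Combining this with the chain-rule identities $(\partial_{x_1}B_i)(r,0,z)=\partial_r\tilde{B}_i$ and $(\partial_{x_3}B_i)(r,0,z)=\partial_z\tilde{B}_i$ yields
\[
(\divv B)(r,0,z)=\partial_r\tilde{B}_1+\frac{\tilde{B}_1}{r}+\partial_z\tilde{B}_3=\frac{1}{r}\left(\partial_r(r\tilde{B}_1)+\partial_z(r\tilde{B}_3)\right).
\]
Differentiating $B(\theta*x)=\theta*B(x)$ in $x$ shows $DB(\theta*x)$ is conjugate to $DB(x)$, so the scalar $\divv B$ is $S^1$-invariant; hence $\divv B\equiv 0$ on $\overline{\Omega}$ is equivalent to its vanishing on the slice $\{(r,0,z):(r,z)\in Q\}$, which by the displayed formula and $r>0$ is exactly condition \eqref{eq:divergence_free_condition}.

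For part b) the first assertion is immediate: inserting $\tilde{B}_1=r^{-1}\partial_z\tilde{A}$, $\tilde{B}_3=-r^{-1}\partial_r\tilde{A}$ gives $\partial_r(r\tilde{B}_1)+\partial_z(r\tilde{B}_3)=\partial_r\partial_z\tilde{A}-\partial_z\partial_r\tilde{A}=0$ by equality of mixed partials for $\tilde{A}\in\cC^2(\overline{Q})$. For the first integral I would differentiate $I^\beta(r(t),z(t),w(t))$ along a solution of \eqref{eq:characteristic_system_in_toroidal_case}. Reading off the second component of the velocity equation, namely
\[
\dot{w}_2=\frac{q_\beta}{cm_\beta}\left(w_3\tilde{B}_1-w_1\tilde{B}_3\right)-\frac{w_1w_2}{r},
\]
the crucial observation is that in $\frac{d}{dt}(rw_2)=\dot{r}\,w_2+r\dot{w}_2$ the geometric terms $w_1w_2$ cancel, leaving $\frac{d}{dt}(rw_2)=\frac{q_\beta}{cm_\beta}r\left(w_3\tilde{B}_1-w_1\tilde{B}_3\right)$. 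Substituting $r\tilde{B}_1=\partial_z\tilde{A}$, $r\tilde{B}_3=-\partial_r\tilde{A}$ then shows $-\frac{cm_\beta}{q_\beta}\frac{d}{dt}(rw_2)=-w_1\partial_r\tilde{A}-w_3\partial_z\tilde{A}$, which exactly cancels $\frac{d}{dt}\tilde{A}(r,z)=w_1\partial_r\tilde{A}+w_3\partial_z\tilde{A}$ (using $\dot r=w_1$, $\dot z=w_3$); hence $\frac{d}{dt}I^\beta=0$.

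I expect the only delicate point to be the index and sign bookkeeping: correctly extracting the second components of the Lorentz term $w\times\tilde{B}$ and of the curvature term $\frac{w_2}{r}(w_2,-w_1,0)^T$, and verifying that the $w_1w_2$ contributions from $\dot r\,w_2$ and from $r\dot w_2$ carry opposite signs. Physically this is just conservation of the canonical angular momentum $rw_2$ augmented by the poloidal flux $\tilde{A}$ for an axisymmetric poloidal field, so the cancellation is structurally forced; nonetheless, consistent with the paper's stated preference, I would verify it by the explicit computation above rather than invoking Noether's theorem.
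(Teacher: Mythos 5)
Your proposal is correct and follows essentially the same route as the paper: for a) you establish $S^1$-invariance of $\divv B$ via the conjugation identity for the Jacobian $DB$, derive the slice identities (in particular $(\partial_{x_2}B_2)(r,0,z)=\tilde{B}_1/r$) by differentiating the equivariance relation in $\theta$ at $\theta=0$, and read off \eqref{eq:divergence_free_condition} from $\divv B(r,0,z)=\tfrac1r\bigl(\partial_r(r\tilde{B}_1)+\partial_z(r\tilde{B}_3)\bigr)$; for b) you verify the mixed-partials cancellation and then differentiate $I^\beta$ along characteristics, which is exactly the paper's computation, merely organized through $\tfrac{d}{dt}(rw_2)$ rather than expanded in one step. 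All signs and index bookkeeping (the $w_1w_2$ cancellation and the formula $[w\times\tilde{B}]_2=\tfrac1r(w_3\partial_z\tilde{A}+w_1\partial_r\tilde{A})$) check out.
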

\begin{proof}
a) First of all observe that the divergence of $B$ is $S^1$-invariant. Indeed differentiation of the identity $B(\theta*x)=\theta*B(x)$ with respect to $x$ in the direction $y$ shows that the Jacobian $DB:=
\left(
\begin{array}{ccc}
\partial_{x_1}B_1 &\partial_{x_2}B_1 &\partial_{x_3}B_1   \\
\partial_{x_1}B_2 &\partial_{x_2}B_2 &\partial_{x_3}B_2   \\
\partial_{x_1}B_3 &\partial_{x_2}B_3 &\partial_{x_3}B_3   
\end{array}\right)
:\Omega\rightarrow \R^{3\times 3}$ satisfies
\begin{equation}\label{e4.5}
(DB)(\theta*x)(\theta*y)=\theta*((DB)(x)(y))
\end{equation}
for all $\theta\in S^1$, $x\in\overline{\Omega}$ and $y\in\R^3$, where in the left hand side of \eqref{e4.5} we consider the Jacobian $DB$ at the point $\theta*x$ and then multiply the matrix $(DB)(\theta*x)$ by the vector $(\theta*y)$, in the right hand side of \eqref{e4.5} we consider the Jacobian $DB$ at the point $x$, multiply the matrix $(DB)(x)$ by the vector $(y)$ and consider the rotation by the angle $\theta$. Let $\{y^{(j)}\}$, 
$j=1,2,3$, be an orthonormal basis in $\R^3$. Then $\{\theta*y^{(j)}\}$ is also 
an orthonormal basis in $\R^3$. Since a trace of martix is invariant under an orthonormal transformation, using identity \eqref{e4.5}, we obtain 
\begin{align*}
\divv &B(\theta*x)=\tr (DB)(\theta*x)=\sum_{i=1}^3\ska{(DB)(\theta*x)(\theta*y^{(i)}),(\theta*y^{(i)})}_{\R^3}\\
&=\sum_{i=1}^3\ska{\theta*\left((DB)(x)(y^{(i)})\right),\theta*y^{(i)}}_{\R^3}=\sum_{i=1}^3\ska{(DB)(x)(y^{(i)}), y^{(i)}}_{\R^3}=\divv B(x).
\end{align*}
In the same way as in the proof of Lemma \ref{lem:toroidally_reduced_system} one can see that
\begin{gather*}
(\partial_{x_1}B_1)(r,0,z)=\partial_r\tilde{B}_1(r,z),\quad(\partial_{x_3}B_3)(r,0,z)=\partial_z\tilde{B}_3(r,z),\\
(\partial_{x_2}B_2)(r,0,z)=r^{-1}\tilde{B}_1(r,z)
\end{gather*}
for $(r,z)\in Q$. For any $x=\theta*(r,0,z)\in\Omega$, we therefore conclude
\[
\divv B(\theta*(r,0,z))=\divv B(r,0,z)=\partial_r\tilde{B}_1(r,z)+r^{-1}\tilde{B}_1(r,z)+\partial_z\tilde{B}_3(r,z)
\]
and the statement follows.

b) The first part is clear. Let us now differentiate $I^\beta$ along a solution $(r,z,w)$ of the characteristic system \eqref{eq:characteristic_system_in_toroidal_case} defined on a time interval $J$ and mapping into $Q\times\R^3$. There holds
\begin{align*}
\frac{d}{dt}I^\beta(r,z,w)&=\partial_r\tilde{A}(r,z)w_1+\partial_z\tilde{A}(r,z)w_3-\frac{cm_\beta}{q_\beta}w_1w_2\\
&\hspace{80pt}-\frac{cm_\beta}{q_\beta}r\left(\frac{q_\beta}{cm_\beta}[w\times \tilde{B}(r,z)]_2-\frac{w_1w_2}{r}\right)=0,
\end{align*}
since the second component of the crossproduct is given by
\[
[w\times\tilde{B}(r,z)]_2=\frac{1}{r}(w_3\partial_z\tilde{A}(r,z)+w_1\partial_r\tilde{A}(r,z)).
\]
Thus $I^\beta$ is a first integral.
\end{proof}

The integral $I^\beta$ is induced by the rotational symmetry of the original system. It is independent of the potential $\tilde{\varphi}$ and the second component of the cross-sectional magnetic field $\tilde{B}_2$. By a suitable choice of $\tilde{A}$ it can be used to control the spatial support of our stationary solutions as we will see below. Going back to the original equivariant magnetic field $B$ defined on $\Omega=S^1*Q$, we have that the poloidal part $B_{pol}(x):=P_xB(x)$ and the toroidal part $B_{tor}(x):=(\id_{\R^3}-P_x)B(x)$, cf. Section \ref{sec:introduction} for the precise definition of $P_x$, are given by
\[
B_{pol}(\theta*(r,0,z))=\theta*\begin{pmatrix}
\tilde{B}_1(r,z)\\
0\\
\tilde{B}_3(r,z)
\end{pmatrix},\quad B_{tor}(\theta*(r,0,z))=\theta*\begin{pmatrix}
0\\
\tilde{B}_2(r,z)\\
0
\end{pmatrix}.
\]
Therefore, as already discussed in the introduction after Theorem \ref{thm:toroidal_solutions}, the toroidal part has no influence on the control of the support of the stationary solutions. Moreover, a purely toroidal magnetic field, i.e. $\tilde{A}=0$, does not provide the existence of a first integral $I^\beta$  useful for the confinement. Indeed, it is known in plasma theory \cite[Chap.3,\S 3.5]{Miyamoto}, that a purely toroidal magnetic field does not provide plasma confinement due to a drift of the charged particles.

In view of Theorem \ref{thm:toroidal_solutions} it remains to combine the results from Sections \ref{sec:general_VP}, \ref{sec:further_properties} with the just presented Lemmas.

\begin{proof}[Proof of Theorem \ref{thm:toroidal_solutions}] Let $x_0\in\Omega$, $r_0:=\sqrt{x_{0,1}^2+x_{0,2}^2}$, $z_0:=x_{0,3}$, as well as $g\equiv 0$ and $B_b:\overline{\Omega}\rightarrow\R^3$, $b>0$ be $S^1$-equivariant with poloidal part
\[
P_xB_b(x)=\frac{b}{x_1^2+x_2^2}\begin{pmatrix}
x_1(x_3-z_0)\\
x_2(x_3-z_0)\\
-\sqrt{x_1^2+x_2^2}(\sqrt{x_1^2+x_2^2}-r_0)
\end{pmatrix}.
\]
The induced cross-sectional magnetic field $\tilde{B}_b:\overline{Q}\rightarrow\R^3$ reads 
\[
\tilde{B}_b(r,z)=\frac{b}{r}\begin{pmatrix}
z-z_0\\
0\\
-(r-r_0)
\end{pmatrix}
+\begin{pmatrix}
0\\\tilde{B}_2(r,z)\\0
\end{pmatrix},
\]
where $\tilde{B}_2$ corresponds to the unspecified toroidal part of $B_b$, which may or may not depend on the parameter $b$. In view of Lemma \ref{lem:existence_of_additional_first_integral_in_toroidal_case} b) one observes that $\tilde{B}_b$ is induced by the function $\tilde{A}_b:\overline{Q}\rightarrow\R$,
$
\tilde{A}_b(r,z)=\frac{b}{2}\big((r-r_0)^2+(z-z_0)^2\big)
$, such that besides the energy $E^\pm(r,z,w)=\frac{1}{2}m_\pm\abs{w}^2+q_\pm \tilde{\varphi}(r,z)$ also 
\[
I^\pm_b(r,z,w)=\frac{b}{2}\big((r-r_0)^2+(z-z_0)^2\big)-\frac{cm_\pm}{q_\pm}rw_2
\]
is a first integral of the characteristic system \eqref{eq:characteristic_system_in_toroidal_case} of the reduced Vlasov-Poisson system stated in Lemma \ref{lem:toroidally_reduced_system}.

We have already checked, cf. page \pageref{eq:reduced_system_fits}, that this reduced system fits into the general framework of Section \ref{sec:general_VP}. Moreover, the additional integral $I^\pm_b$ clearly is of the form \eqref{eq:form_of_additional_integral}, such that also the Lemmas stated in Section \ref{sec:further_properties} for the two component system with vanishing boundary data, which we consider here, apply. For condition \eqref{eq:charge_ratio_condition_on_L} in Lemma \ref{lem:continuous_family} observe that for any $g\in\cC^2(\overline{Q})$, $g_{|\partial Q}=0$ there holds
\begin{align*}
\int_Q(-L g)(r,z)g(r,z)r\:d(r,z)&=-\int_Q\big(\partial_r(r\partial_r g(r,z))+r\partial_z^2 g(r,z)\big)g(r,z)\:d(r,z)\\
&=\int_Qr\abs{\nabla g(r,z)}^2\:d(r,z)\geq c_0\norm{\nabla g}^2_{L^2(Q)},
\end{align*}
where $\nabla=(\partial_r,\partial_z)$ and $c_0=\min\set{r:(r,z)\in\overline{Q}}>0$.

Let us now fix $b>0$ and turn to the proof of part (i) of Theorem \ref{thm:toroidal_solutions}. It follows from Theorem \ref{thm:abstract_theorem} and Lemmas \ref{lem:confinement_in_space}, \ref{lem:continuous_family} that the cross-sectional system stated in Lemma \ref{lem:toroidally_reduced_system} has a stationary solution $(\tilde{f}^+,\tilde{f}^-,\tilde{\varphi})$ with $\supp \tilde{f}^\pm$ arbitrarily small localized around $(r_0,z_0,0)\in Q\times\R^3$ and with arbitrary total charge ratio $\tilde{\cQ}^+/\tilde{\cQ}^-$, where
$\tilde{\cQ}^\pm=q_\pm||\tilde{f}^\pm||_{L^1(Q\times\R^3)}$ denote the cross-sectional total charges. Indeed we choose cutoff functions $\psi^\pm_\lambda:\R^2\rightarrow[0,\infty)$ and parameters $E_0,I_0>0$ as in \eqref{eq:charge_ratio_condition_on_psi}, \eqref{eq:charge_ratio_condition_on_psi_2}, such that 
\[
\tilde{f}^\pm(r,z,w)=\psi_\lambda^\pm\left(\frac{1}{2}m_\pm\abs{w}^2+q_\pm \tilde{\varphi}(r,z),\frac{b}{2}\big((r-r_0)^2+(z-z_0)^2\big)-\frac{cm_\pm}{q_\pm}rw_2\right).
\]
 A sufficiently small choice of $E_0$, $I_0$ allows us to control the support of the solutions via \eqref{eq:velocity_radius} and \eqref{eq:spatial_radius}, whereas the parameter $\lambda\in(0,1)$ allows us to adjust the cross-sectional charge ratio, see Lemma \ref{lem:continuous_family}. We then use Lemma \ref{lem:toroidally_reduced_system} to transform $(\tilde{f}^+,\tilde{f}^-,\tilde{\varphi})$ to a stationary solution $(f^+,f^-,\varphi)$ of the original system stated on the toroidal domain $\Omega$. The support of $f^\pm$ is obtained by applying the $S^1$-action to the support of $\tilde{f}^\pm$, cf. \eqref{eq:transformation_of_the_supports}. For the total charges there holds
\begin{align*}
\cQ^\pm&=q_\pm\int_{\Omega\times\R^3}f^\pm(x,v)\:d(x,v)=2\pi q_\pm\int_{Q\times\R^3}\tilde{f}^\pm(r,z,w)r\:d(r,z,w),
\end{align*}
which in the same way as the cross-sectional total charges can be adjusted via the parameter $\lambda\in(0,1)$, see Remark \ref{rem:continuity_of_modified_charges}. This finishes the proof of part (i).

Part (ii) is a direct consequence of Lemma \ref{lem:scaling_lemma}, Remark \ref{rem:scalign_of_modified_charges} resp.. 
\end{proof}

\section{Solutions in an infinite cylinder}\label{sec:infinite_cylinder}
In this section we consider the two-component Vlasov-Poisson system  \eqref{eq:traditional_V}, \eqref{eq:traditional_P} with boundary condition \eqref{eq:trad_BC} on an infinite cylinder
$\Omega=Q{\times}\mathbb{R}$, where $Q=B_{r_0}(0)\subset\mathbb{R}^2$ is the two-dimensional disc of radius $r_0>0$ centered at $0$.
Concerning the magnetic field we suppose that $B$ is constant and directed along the cylinder axis, i.e., 
\[
B(x)=(0,0,b),\quad x\in\overline{\Omega}
\]
for some $b>0$.

In \cite{Knopf} it has been shown that the three-dimensional Vlasov-Poisson system can be reduced to a two-dimensional system with the following ansatzes. Restricting ourselves to the stationary case we assume the functions $f^\beta$, $\varphi$ to be independent
of $x_3$ and to have the structure 
\[
f^\beta(x,v)=\bar{f}^\beta(x_1,x_2,v_1,v_2)\xi^\beta(v_3),\quad \varphi(x)=\bar{\varphi}(x_1,x_2)
\]
where 
$\bar{f}^\beta:\overline{Q}\times\R^2\rightarrow[0,\infty)$
is continuously differentiable with respect to all its variables and compactly
supported, $\xi^\beta\in\cC_0^1(\R)$ with 
$\int_{-\infty}^{\infty}\xi^\beta(s)ds=1$, $\bar{\varphi}\in\cC^2(\overline{Q})$.
Under these assumptions the reduced stationary system corresponding to \eqref{eq:traditional_V}, \eqref{eq:traditional_P}, \eqref{eq:trad_BC} then reads
\begin{align}
\begin{gathered}
\label{eq:reduced final}
\ska{v',\nabla_{x'}\bar{f}^\beta}_{\R^2}+\frac{q_\beta}{m_\beta}
\ska{-\nabla_{x'}\bar{\varphi}(x')+\frac{b}{c} v^{\bot} ,\nabla_{v'}\bar{f}^\beta}_{\R^2}=0\ \text{in }Q\times\R^2,\\
-\Delta\bar{\varphi}(x')=4\pi\sum_\beta q_\beta\int_{\R^2}\bar{f}^\beta\:dv'\ \text{in }Q,\\
\bar{\varphi}(x')=0\ \text{on }\partial Q,
\end{gathered}
\end{align}
where $x'=(x_1,x_2)^T$, $v'=(v_1,v_2)^T$, $v^{\bot}=(v_2,-v_1)^T$. Note that the reduction in \cite{Knopf} has been done in the case $\Omega=\R^3$ with a decay condition for the potential instead of boundary condition \eqref{eq:trad_BC}, but it easily carries over to our setting.

Equation \eqref{eq:reduced final} fits into the class of equations treated in Section \ref{sec:general_VP}. Since \eqref{eq:condition_on_F} holds true, the energy
\[
E^\beta(v',\bar{\varphi}(x'))=\frac{1}{2}m_{\beta}|v'|^2+q_{\beta}\bar{\varphi}(x')
\]
is a first integral of the corresponding characteristic system 
\begin{equation}\label{eq:characteristics_infinite_cylinder}
\dot{x}'=v', \quad
\dot{v}'=
\frac{q_\beta}{m_\beta}
\left(-\nabla_{x'}\bar{\varphi}(x')+\frac{b}{c}v^{\bot} \right).
\end{equation}
Moreover, it has been shown in \cite{Knopf} that if one considers \eqref{eq:characteristics_infinite_cylinder} with a potential $\bar{\varphi}$ depending only on $|x'|$, then also the function
\begin{align}\label{e5.3}
I^\beta_b(x',v')=\frac{b}{2}|x'|^2+\frac{cm_\beta}{q_\beta} \ska{x',v^{\perp}}_{\R^2}
\end{align}
is a first integral of \eqref{eq:characteristics_infinite_cylinder}. Observe that $I^\beta_b$ is of the form \eqref{eq:form_of_additional_integral}.

Now instead of reducing system \eqref{eq:reduced final} further by the use of polar coordinates, similar to the use of cylindrical coordinates in Section \ref{sec:toroidal_sym}
and also as it has been done in \cite{Knopf}, it seems here more convenient to deal with the needed symmetry by means of Theorem \ref{thm:general_theorem_with_symmtersi}.

Indeed \eqref{eq:symmetry_assumptions} holds true for $G=S^1$ acting via rotations $\theta*x'$ by angle $\theta\in \R/2\pi\Z=S^1$, $Q=B_{r_0}(0)$, $g=0$ and $L=\Delta$. Moreover, for all $\psi^\pm\in\cC^1(\R^2)$, $\psi^\pm\geq 0$ satisfying \eqref{eq:cutoff_condition} the functions 
\begin{align*}
\hat{\rho}^\pm(x',u)&=\int_{\R^2}\psi^\pm\left(E^\pm(v',u),I^\pm_b(x',v')\right)\:dv'
\end{align*}
satisfy $\hat{\rho}^\pm(\theta*x',u)=\hat{\rho}^\pm(x',u)$, $\theta\in S^1$, $x'\in Q$, $u\in\R$ 
as can be seen by the transformation $v'=\theta*w'$. In consequence of Theorem \ref{thm:general_theorem_with_symmtersi} and Lemma \ref{lem:confinement_in_space} we conclude
\begin{corollary}\label{cor:infinite_cylinder}
Let $\psi^\pm\in\cC^1(\R^2)$, $\psi^\beta\geq 0$ with $\psi^\pm(E,I)=0$ for $E\geq E_0^\pm$ or $I\geq I_0^\pm$, as well as $b>0$. There exists a stationary solution $(\bar{f}^+,\bar{f}^-,\bar{\varphi})$ of problem \eqref{eq:reduced final} satisfying
\[
\bar{f}^\pm(x',v')=\psi^\pm\left(E^\pm(v',\bar{\varphi}(x')),I^\pm_b(x',v')\right),
\]
$\bar{\varphi}(x')=\bar{\varphi}(\abs{x'})$ and $\supp \bar{f}^\pm\subset \overline{B_{S_0^\pm}(0)}\times \overline{B_{R_0^\pm}(0)}$ with $R_0^\pm$, $S_0^\pm$ given in \eqref{eq:velocity_radius}, \eqref{eq:spatial_radius}.
\end{corollary}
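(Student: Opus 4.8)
The plan is to obtain Corollary \ref{cor:infinite_cylinder} as a direct application of the symmetric existence result Theorem \ref{thm:general_theorem_with_symmtersi}, combined with the spatial confinement Lemma \ref{lem:confinement_in_space}, taking the symmetry group to be $G=S^1$ acting on $\R^2$ by the rotations $\theta*x'$. As already observed, the reduced system \eqref{eq:reduced final} sits inside the abstract framework of Section \ref{sec:general_VP} with $n=m=2$, $M^\pm\equiv\id_{\R^2}$, $L=\Delta$, vanishing boundary data $g\equiv0$ and Lorentz force $F^\pm(x',v')=\frac{q_\pm b}{cm_\pm}v^\perp$, which obeys \eqref{eq:condition_on_F} because $\ska{v^\perp,v'}_{\R^2}=0$. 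Thus Lemma \ref{lem:energy_is_first_integral} places the energy $E^\pm$ in $\cI^\pm$, while the integral $I^\pm_b$ of \eqref{e5.3} is, as recalled from \cite{Knopf}, a first integral of \eqref{eq:characteristics_infinite_cylinder} only for radial potentials; that is, $I^\pm_b\in\cI^\pm_{sym}$ but in general $I^\pm_b\notin\cI^\pm$. This is precisely the scenario for which Theorem \ref{thm:general_theorem_with_symmtersi} was tailored, and it also pinpoints the one genuinely non-formal point of the argument, which I discuss last.

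First I would verify the symmetry assumptions \eqref{eq:symmetry_assumptions}: $\theta*Q=Q$ holds since $Q=B_{r_0}(0)$ is a centred disc, $g(\theta*\cdot)=g$ holds trivially as $g\equiv0$, and $L(\varphi(\theta*\cdot))=(L\varphi)(\theta*\cdot)$ is the rotational invariance of the Laplacian. Next I would check that the reduced densities
\[
\hat{\rho}^\pm(x',u)=\int_{\R^2}\psi^\pm\left(E^\pm(v',u),I^\pm_b(x',v')\right)\,dv'
\]
are $S^1$-invariant. Here $E^\pm(v',u)$ depends on $v'$ only through $\abs{v'}$, and $I^\pm_b(\theta*x',\theta*v')=I^\pm_b(x',v')$ because rotations preserve both $\abs{x'}$ and the pairing $\ska{x',v^\perp}_{\R^2}$ (rotation commuting with the $\perp$-operation). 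Hence the substitution $v'=\theta*w'$, of unit Jacobian, gives $\hat{\rho}^\pm(\theta*x',u)=\hat{\rho}^\pm(x',u)$ for every $\theta\in S^1$.

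With these checks in place, Theorem \ref{thm:general_theorem_with_symmtersi} applies with $l_\pm=1$, the single symmetric integral $I^\pm_b$, cutoff parameter $E_0^\pm$ and cutoff function $\psi^\pm$ satisfying \eqref{eq:cutoff_condition}. It yields a stationary solution of \eqref{eq:reduced final} with $\bar{f}^\pm(x',v')=\psi^\pm(E^\pm(v',\bar{\varphi}(x')),I^\pm_b(x',v'))$ and, crucially, with an $S^1$-invariant, hence radial, potential $\bar{\varphi}(x')=\bar{\varphi}(\abs{x'})$. The velocity bound $\supp\bar{f}^\pm\subset\overline{Q}\times\overline{B_{R_0^\pm}(0)}$ is property \ref{item:velocity_support} of Theorem \ref{thm:abstract_theorem} specialised to the present two-component case with $g\equiv0$, which reproduces the radius $R_0^\pm$ of \eqref{eq:velocity_radius}. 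Since $\psi^\pm$ also vanishes for $I\geq I_0^\pm$, the extra cutoff \eqref{eq:cutoff_condition_2} holds, and because $I^\pm_b$ has the shape \eqref{eq:form_of_additional_integral} (with $x_0=0$, $a_0=0$ and $A_0$ the quarter-turn rotation scaled by $c$), Lemma \ref{lem:confinement_in_space} then supplies the spatial bound $\abs{x'}\le S_0^\pm$ with $S_0^\pm$ as in \eqref{eq:spatial_radius}.

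The main obstacle, and the only step requiring more than a routine verification, is guaranteeing that the potential is radial: for a non-symmetric solution of the semilinear Poisson problem the function $I^\pm_b$ need not be conserved, so the ansatz for $\bar{f}^\pm$ would fail to solve the Vlasov equations. This is exactly what the proof of Theorem \ref{thm:general_theorem_with_symmtersi} handles through the Envelope Theorem \ref{thm:envelope_theorem}: together with any solution $\varphi$, each rotated function $\varphi(\theta*\cdot)$ also solves \eqref{eq:nonlinear_poisson_equation} and lies between the same sub-/supersolutions, so the maximal solution $\varphi_{sup}$ is forced to be $S^1$-invariant. I would invoke this directly rather than reproving it, leaving only the elementary computations above.
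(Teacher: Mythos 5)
Your proposal is correct and follows essentially the same route as the paper: it invokes Theorem \ref{thm:general_theorem_with_symmtersi} with $G=S^1$ (rather than reducing to polar coordinates), verifies the symmetry assumptions \eqref{eq:symmetry_assumptions} and the rotational invariance of $\hat{\rho}^\pm$ via the substitution $v'=\theta*w'$, and then obtains the support bounds from Theorem \ref{thm:abstract_theorem}\ref{item:velocity_support} and Lemma \ref{lem:confinement_in_space}, exactly as the paper does. Your identification of $I^\pm_b$ as having the form \eqref{eq:form_of_additional_integral} with $x_0=0$, $a_0=0$ and $A_0$ a scaled quarter-turn, and your emphasis on the Envelope Theorem as the mechanism forcing radiality of $\bar{\varphi}$, are both accurate and consistent with the paper's argument.
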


\begin{remark}
a) For fixed $b>0$ we therefore find stationary solutions of \eqref{eq:reduced final} whose support can be made arbitrarily sharp simply by a sufficiently small choice of the cutoff values $E_0^\pm$, $I_0^\pm$. Moreover, by choosing $\psi^\pm$ as in \eqref{eq:charge_ratio_condition_on_psi}, \eqref{eq:charge_ratio_condition_on_psi_2} we can also prescribe the ratio between the two total charges $q_+\norm{\bar{f}^+}_{L^1(Q\times\R^2)}$ and $q_-\norm{\bar{f}^-}_{L^1(Q\times\R^2)}$. As in Theorem \ref{thm:toroidal_solutions} (ii) one can then scale the total charges without changing their ratio nor the spatial supports of the solutions by increasing the strength $b$ of the magnetic field, cf. Lemma \ref{lem:scaling_lemma}.

b) Taking into account the reduction process leading to \eqref{eq:reduced final}, we see that every solution $(\bar{f}^+,\bar{f}^-,\bar{\varphi})$ as in Corollary \ref{cor:infinite_cylinder} gives rise to solutions of the original system \eqref{eq:traditional_V}, \eqref{eq:traditional_P}, \eqref{eq:trad_BC}
by setting $f^\pm(x,v)=\bar{f}^\pm(x',v')\xi^\pm(v_3)$ and $\varphi(x)=\bar{\varphi}(x')$. Thus we obtain stationary solutions whose spatial supports are contained in arbitrarily small cylinders $B_\delta(0)\times \R\subset \Omega$. Note however, that concerning the total charges it only makes sense to compare the cross-sectional charges, since $f^\pm\notin L^1(\Omega\times\R^3)$.

c) From \eqref{eq:form_of_additional_integral}, \eqref{eq:spatial_radius}, and \eqref{e5.3} it follows that $S^\beta_0\to 2r^\beta_0$ as $I^\beta_0\to 0$, where $r^\beta_0=\dfrac{R^\beta_0m_\beta c}{b|q_\beta|}$ is the Larmor radius, see \cite[Chap.2,\S 2.3]{Miyamoto}.
\end{remark}

\section{Mirror trap solutions in a finite cylinder}\label{sec:finite_cylinder}
In this last section we give examples of stationary solutions in a finite cylinder. The solutions correspond to a two-component plasma confined in a Mirror trap. Here the supports touch the boundary of the domain, but only in two small prescribed discs at the top and the bottom of the cylinder.

We consider system \eqref{eq:traditional_V}, \eqref{eq:traditional_P}, \eqref{eq:trad_BC} 
in a finite cylinder $\Omega=Q=B_{r_0}(0){\times}(-l,l)$, $B_{r_0}(0)\subset\R^2$, $r_0>0$, $l>0$.
Let the magnetic field $B(x)$ be given by
\begin{gather}\label{field_finite}
B(x)=\begin{pmatrix}
-\frac{1}{2}a'(x_3)x_1\\
-\frac{1}{2}a'(x_3)x_2\\
 a(x_3)
\end{pmatrix},
\end{gather}
where $a:[-l,l]\rightarrow (0,\infty)$ is $\cC^1$. 
If $a(x_3)\equiv a$, $a\in \R$, we get back the constant magnetic field directed along the
cylinder axis, which we have used in Section \ref{sec:infinite_cylinder}. Observe that $B$ is divergence-free.
Similar to the previous section, but now with $x_3$ dependence, we are interested in potentials of the form $\varphi(x)=\varphi(|x'|,x_3)$, where 
$x'=(x_1,x_2)\in \overline{B_{r_0}(0)}$, $x_3\in[-l,l]$.
For such $\varphi$ the function 
$$I^\beta(x,v)=\frac{a(x_3)}{2}|x'|^2+\frac{cm_\beta}{q_\beta}\ska{x',v^{\perp}}_{\R^2}=
\frac{a(x_3)}{2}(x_1^2+x_2^2)+\frac{cm_\beta}{q_\beta}(x_1v_2-x_2v_1)$$
as well as the energy 
$$E^\beta(v,\varphi(x))=\frac{1}{2}m_\beta|v|^2+q_\beta\varphi(x)$$
are  first integrals of the characteristic system
\begin{gather*}
\dot{x}=v,\quad \dot{v}=-\frac{q_\beta}{m_\beta}\nabla\varphi(x)+\frac{q_\beta}{cm_\beta}\begin{pmatrix}
a(x_3)v_2+\frac{1}{2}a'(x_3)x_2v_3\\
-a(x_3)v_1-\frac{1}{2}a'(x_3)x_1v_3\\
\frac{1}{2}a'(x_3)(x_1v_2-x_2v_1)
\end{pmatrix}
\end{gather*}
Applying Theorem \ref{thm:general_theorem_with_symmtersi}
we obtain the following stationary solutions.
\begin{corollary}
Let the magnetic field $B$ be given by \eqref{field_finite}.
There exists a stationary solution $(f^{+},f^{-},\varphi)$ for the system \eqref{eq:traditional_V}, \eqref{eq:traditional_P}, \eqref{eq:trad_BC}
considered on the cylinder $Q$ with $f^\beta=\psi^\beta(E^\beta,I^\beta)$,
whenever $\psi^\beta \in C^1(\R,\R)$ with
$\psi^\beta(E,I)=0$ for $E\geq E^\beta_0$ or
$I\geq I^\beta_0$. 
\end{corollary}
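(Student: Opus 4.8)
The plan is to verify that this Mirror-trap setting fits the symmetric framework of Theorem~\ref{thm:general_theorem_with_symmtersi} and then to apply that theorem directly. First I would identify the group action: here the relevant symmetry is the rotation $\theta*x$ about the $x_3$-axis acting only on the horizontal variables $x'=(x_1,x_2)$ (and correspondingly on $v'=(v_1,v_2)$), leaving $x_3$ and $v_3$ fixed. I would check the three conditions in~\eqref{eq:symmetry_assumptions}: the finite cylinder $Q=B_{r_0}(0)\times(-l,l)$ is invariant under this rotation, the boundary data $g\equiv 0$ is trivially invariant, and $L=\Delta$ commutes with rotations since the Laplacian is rotation-invariant. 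This places us in the hypotheses of Theorem~\ref{thm:general_theorem_with_symmtersi} with $G=S^1$, $n=m=3$, $M^\beta\equiv\id_{\R^3}$ and $F^\beta(x,v)=\frac{q_\beta}{cm_\beta}v\times B(x)$ the Lorentz force, which satisfies~\eqref{eq:condition_on_F} because $\ska{v\times B,v}_{\R^3}=0$.

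Next I would confirm that $E^\beta$ and $I^\beta$ are admissible first integrals. The energy $E^\beta$ lies in $\cI^\beta$ by Lemma~\ref{lem:energy_is_first_integral}. For $I^\beta$, the statement preceding the corollary already asserts (via direct differentiation along the characteristic system) that $I^\beta(x,v,\varphi(x))$ is a first integral whenever $\varphi$ has the rotational form $\varphi(|x'|,x_3)$; hence $I^\beta\in\cI^\beta_{sym}$ in the sense of the refined definition. Since the cutoff function $\psi^\beta$ satisfies the condition~\eqref{eq:cutoff_condition} (namely $\psi^\beta(E,I)=0$ for $E\geq E_0^\beta$), Theorem~\ref{thm:abstract_theorem} guarantees the densities $\hat\rho^\beta$ are well-defined and of class $\cC^1$, with compactly supported velocity integration.

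The one genuine point requiring verification, and the step I expect to be the main obstacle, is the $G$-invariance of the reduced densities
\[
\hat\rho^\beta(x,u)=\int_{\R^3}\psi^\beta\!\left(E^\beta(v,u),\,I^\beta(x,v,u)\right)dv,
\]
as demanded by the hypothesis of Theorem~\ref{thm:general_theorem_with_symmtersi}. Here I would argue exactly as in the infinite-cylinder case of Section~\ref{sec:infinite_cylinder}: under the substitution $v=\theta*w$ (which preserves $|v|=|w|$ and hence $E^\beta$, and preserves Lebesgue measure on $\R^3$), one computes that $I^\beta(\theta*x,v,u)=I^\beta(x,w,u)$, since the quadratic term $\frac{a(x_3)}{2}|x'|^2$ and the angular-momentum term $\ska{x',v^\perp}_{\R^2}$ are both invariant under the simultaneous rotation of $x'$ and $v'$ (note $a$ depends only on the rotation-invariant coordinate $x_3$). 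Therefore $\hat\rho^\beta(\theta*x,u)=\hat\rho^\beta(x,u)$. With invariance established, Theorem~\ref{thm:general_theorem_with_symmtersi} produces a stationary solution whose potential is the maximal solution $\varphi_{sup}$, which is $G$-invariant, i.e.\ of the required form $\varphi(|x'|,x_3)$; the densities are then $f^\beta=\psi^\beta(E^\beta,I^\beta)$ as claimed, completing the proof.
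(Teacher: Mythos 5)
Your proposal is correct and follows exactly the route the paper takes: the paper's own proof consists of invoking Theorem~\ref{thm:general_theorem_with_symmtersi} with the rotational $S^1$-symmetry about the cylinder axis, using the energy $E^\beta$ and the integral $I^\beta$ (valid for potentials of the form $\varphi(|x'|,x_3)$), just as you do. The details you spell out -- the verification of \eqref{eq:symmetry_assumptions}, the membership $I^\beta\in\cI^\beta_{sym}$, and the $G$-invariance of $\hat\rho^\beta$ via the substitution $v=\theta*w$ -- are precisely the checks the paper leaves implicit, and your computations for them are sound.
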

\begin{remark} a) Note that the integral $I^\beta$ is not exactly of the form \eqref{eq:form_of_additional_integral}. Instead of this for every $x_3\in[-l,l]$ in the function $(x',v)\mapsto I^\beta(x',x_3,v)$ the coefficient $a(x_3)$ takes the role of the parameter $b$.
Having a quick look at Sections \ref{sec:scaling_of_total_charges}, \ref{sec:ratio_of_charges} one can check that this still allows us in the usual way to prescribe the ratio between the total charges and to scale their absolute values. In order to not risk too much repetition we omit any further details.

b) In analogy to Lemma \ref{lem:confinement_in_space} it is also easy to see that the value of the function $a$ at height $x_3\in[-l,l]$ enters formula \eqref{eq:spatial_radius} instead of $b$.
More precisely, there holds $f^\beta(x,v)=0$ for
\[
\abs{x'}\geq \frac{R_0^\beta\abs{A_0}m_\beta}{a(x_3)\abs{q_\beta}}+\sqrt{\left(\frac{R_0^\beta\abs{A_0}m_\beta}{a(x_3)\abs{q_\beta}}\right)^2+\frac{2I_0^\beta}{a(x_3)}},
\]
where $R_0^\beta$ is defined in \eqref{eq:velocity_radius} and $A_0$ reads
\[
A_0=\begin{pmatrix}
0&-c\\
c&0\\
0&0
\end{pmatrix}.
\]
 This way contrary to Section \ref{sec:infinite_cylinder} the cross-sectional supports $\supp f^\beta\cap \R^2\times\{x_3\}\times\R^3$ now can now vary with $x_3$.
Increasing $a$ towards the boundary points $x_3=\pm l$ one can construct solutions, such that
the plasma hits the boundary of the cylinder $Q$ only in a small region 
at the top $B_{r_0}(0)\times\{l\}$ and the bottom $B_{r_0}(0)\times\{-l\}$ of the cylinder.
\end{remark}

\appendix
\section{A proof of Ak\^o's Theorem}\label{sec:appendix}
We will quickly provide a proof of Ak\^o's Theorem in the simplified formulation we use. Contrary to Ak\^o's formulation and proof of the Theorem we do not require the elliptic operator $L$ to be of ``complete type''. 
 
\begin{proof}[Proof of Theorem \ref{thm:ako}]
For $\alpha\in(0,\tau]$ let $S_\alpha:\cC^{0,\alpha}(\overline{Q})\rightarrow \cC^{2,\alpha}(\overline{Q})$, $f\mapsto S_\alpha(f)$ be the solution operator of 
\[
-L\varphi=f\text{ in }Q,\quad \varphi=g\text{ on }\partial Q.
\]
Due to Schauder theory $S_\alpha$ is well-defined, continuous and satisfies
\begin{equation}\label{eq:schauder_estimate}
\norm{S_\alpha(f)}_\hoeld{2,\alpha}\leq C\left(\norm{f}_\hoeld{0,\alpha}+\norm{g}_\hoeld{2,\alpha}\right)
\end{equation}
for a constant $C>0$ depending on $\alpha$, the dimension $n$ and the coefficients of $L$, see for example \cite{GT} Theorems 6.6, 6.14 and Corollary 3.8.

Next we define a modified right-hand side $\tilde{\rho}:\overline{Q}\times \R\rightarrow \R$,
\begin{equation}\label{eq:definition_of_rho_tilde}
\tilde{\rho}(x,r):=\hat{\rho}\left(x,\max\set{\underline{\varphi}(x),\min\set{\overline{\varphi}(x),r}}\right),
\end{equation}
which coincides with the old right-hand side $\hat{\rho}$ whenever $r\in \left[\underline{\varphi}(x),\overline{\varphi}(x)\right]$.

Clearly $\tilde{\rho}$ is bounded by $\norm{\hat{\rho}}_{\cC^0(K)}$, where $K:=\overline{Q}\times \left[\min\underline{\varphi},\max\overline{\varphi}\right]$. Moreover, observe that $\tilde{\rho}$ is globally $\tau$-H\"older continuous as a composition of a map that is $\tau$-H\"older continuous on $K$ and a Lipschitz map only taking values in $K$. 

Fix $\alpha\in (0,\tau^2)$. In view of Lemma \ref{lem:A2} below and the continuity of the solution operator $S_\alpha$ the map $F:\hoeld{0,\tau}\rightarrow \hoeld{0,\tau}$,
\[
F(\varphi):=S_\alpha(\tilde{\rho}(\cdot,\varphi))
\]
is continuous. Since it actually maps into $\hoeld{2,\alpha}$ it is also compact. Hence Schauder's fixed point theorem applies provided we find a closed ball $B\subset \hoeld{0,\tau}$ with $F(B)\subset B$. 

Let $R>0$ and $\varphi\in \hoeld{0,\tau}$ with $\norm{\varphi}_\hoeld{0,\tau}\leq R$. Using inequalities \eqref{eq:schauder_estimate} and \eqref{eq:rho_inequaltiy} from Lemma \ref{lem:A1} below one sees that
\begin{align*}
\norm{F(\varphi)}_\hoeld{0,\tau}&\leq C\norm{F(\varphi)}_\hoeld{2,\alpha}\leq C\left(\norm{\tilde{\rho}(\cdot,\varphi))}_\hoeld{0,\alpha}+\norm{g}_\hoeld{2,\alpha}\right)\\
&\leq C\left(\left(1+\norm{\varphi}_\hoeld{0,\tau}\right)^\tau+\norm{g}_\hoeld{2,\tau}\right)\leq C\left(1+R\right)^\tau+C,
\end{align*}
where the $\varphi$-independent constants $C>0$ are adapted from estimate to estimate. We conclude that $\norm{F(\varphi)}_\hoeld{0,\tau}\leq R$ for sufficiently large $R>0$.

In consequence there exists $\varphi\in \hoeld{0,\tau}$ with $\varphi=F(\varphi)$. Observe that this means that $\varphi$ is actually contained in $\hoeld{2,\alpha}$ and 
\[
-L\varphi=\tilde{\rho}(\cdot,\varphi)\text{ in }Q,\quad \varphi=g\text{ on }\partial Q.
\]

It remains to show $\underline{\varphi}(x)\leq \varphi(x)\leq \overline{\varphi}(x)$ for all $x\in\overline{Q}$. Note that then the modified right-hand side $\tilde{\rho}(\cdot,\varphi)$ actually coincides with the original right-hand side $\hat{\rho}(\cdot,\varphi)$ and we therefore have found the desired solution.

Let $Q_>:=\set{x\in Q:\varphi(x)>\overline{\varphi}(x)}$ and assume to the contrary that $Q_>$ is not empty. Observe that $\varphi_{|\partial Q}=g$ and $\overline{\varphi}_{|\partial Q}\geq g$ implies $\partial Q_>\subset \set{x\in\overline{Q}:\varphi(x)=\overline{\varphi}(x)}$. Moreover, for $x\in Q_>$ there holds
\[
(-L(\overline{\varphi}-\varphi))(x)\geq \hat{\rho}(x,\overline{\varphi}(x))-\tilde{\rho}(x,\varphi(x))=0,
\]
such that the weak maximum principle implies
\[
\inf_{Q_>}(\overline{\varphi}-\varphi)=\min_{\partial Q_>}(\overline{\varphi}-\varphi)=0,
\]
but this contradicts the definition of $Q_>$. A similar reasoning shows that also the set $Q_<:=\set{x\in Q:\varphi(x)<\underline{\varphi}(x)}$ is empty. This finishes the proof of Theorem \ref{thm:ako}.
\end{proof}

\begin{remark}\label{rem:envelope_proof}
Note that if $\big(\underline{\varphi}_j\big)_{j=1}^l$, $\left(\overline{\varphi}_j\right)_{j=1}^m$ are finite families of subsolutions, supersolutions resp., then one can obtain a solution $\varphi$ satisfying
\begin{align*}
\max\big\{\underline{\varphi}_1(x),\ldots,\underline{\varphi}_l(x)\big\}\leq \varphi(x)\leq \min \big\{\overline{\varphi}_1(x),\ldots,\overline{\varphi}_m(x)\big\}
\end{align*}
for all $x\in\overline{Q}$. This can be seen by modifying the definition of $\tilde{\rho}$ in \eqref{eq:definition_of_rho_tilde} accordingly. 

Note also that the just presented proof gives a uniform bound on the set of solutions to \eqref{eq:ako_poisson_problem}, $\underline{\varphi}\leq \varphi\leq\overline{\varphi}$ in $\cC^{0,\tau}(\overline{Q})$.

Combining these two observations one can carry out a proof of Theorem \ref{thm:envelope_theorem} similar to the original proof in \cite{ako_dirichlet_1961} but again without the ``complete type condition''.
\end{remark}

\begin{lemma}\label{lem:A1}
If $\varphi\in\hoeld{0,\tau}$, then $\tilde{\rho}(\cdot,\varphi)$ with $\tilde{\rho}$ defined in \eqref{eq:definition_of_rho_tilde} is contained in $\hoeld{0,\tau^2}$ and
\begin{equation}\label{eq:rho_inequaltiy}
\norm{\tilde{\rho}(\cdot,\varphi)}_\hoeld{0,\tau^2}\leq C\left(1+\norm{\varphi}_\hoeld{0,\tau}\right)^\tau
\end{equation}
for a constant $C>0$ independent of $\varphi$.
\end{lemma}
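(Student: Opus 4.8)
The plan is to build on two facts already in hand. First, as noted in the proof of Theorem~\ref{thm:ako}, the truncated right-hand side $\tilde{\rho}$ is bounded by $\norm{\hat{\rho}}_{\cC^0(K)}$ and is globally $\tau$-H\"older continuous on all of $\overline{Q}\times\R$, being the composition of the locally $\tau$-H\"older map $\hat{\rho}$ with a globally Lipschitz truncation that takes values only in the compact set $K=\overline{Q}\times[\min\underline{\varphi},\max\overline{\varphi}]$; fix a corresponding constant $L>0$ with $\abs{\tilde{\rho}(x_1,r_1)-\tilde{\rho}(x_2,r_2)}\leq L\big(\abs{x_1-x_2}+\abs{r_1-r_2}\big)^\tau$ for all $(x_i,r_i)\in\overline{Q}\times\R$. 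Second, $\varphi\in\hoeld{0,\tau}$ by hypothesis, so $\abs{\varphi(x)-\varphi(y)}\leq\norm{\varphi}_\hoeld{0,\tau}\abs{x-y}^\tau$. The $\cC^0$-part of \eqref{eq:rho_inequaltiy} is then immediate, since $\norm{\tilde{\rho}(\cdot,\varphi)}_{\cC^0(\overline{Q})}\leq\norm{\tilde{\rho}}_{\cC^0(K)}$ is a constant; the real work is the $\tau^2$-H\"older seminorm.

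For the seminorm I would fix $x,y\in\overline{Q}$ and split the increment through the intermediate point $(x,\varphi(y))$:
\begin{align*}
\abs{\tilde{\rho}(x,\varphi(x))-\tilde{\rho}(y,\varphi(y))}
&\leq\abs{\tilde{\rho}(x,\varphi(x))-\tilde{\rho}(x,\varphi(y))}+\abs{\tilde{\rho}(x,\varphi(y))-\tilde{\rho}(y,\varphi(y))}\\
&\leq L\abs{\varphi(x)-\varphi(y)}^\tau+L\abs{x-y}^\tau.
\end{align*}
The first term is the composition channel: inserting the H\"older bound on $\varphi$ turns it into $L\norm{\varphi}_\hoeld{0,\tau}^\tau\abs{x-y}^{\tau^2}$. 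For the second term I would invoke the boundedness of $Q$: since $\tau^2<\tau$ and $\abs{x-y}\leq\diam(Q)$, one has $\abs{x-y}^\tau\leq C_0\abs{x-y}^{\tau^2}$ with $C_0:=\max\set{1,\diam(Q)^{\tau-\tau^2}}$. Combining,
\[
\abs{\tilde{\rho}(x,\varphi(x))-\tilde{\rho}(y,\varphi(y))}\leq L\big(C_0+\norm{\varphi}_\hoeld{0,\tau}^\tau\big)\abs{x-y}^{\tau^2}\leq L(C_0+1)\big(1+\norm{\varphi}_\hoeld{0,\tau}\big)^\tau\abs{x-y}^{\tau^2}.
\]
Together with the $\cC^0$-bound this yields \eqref{eq:rho_inequaltiy} with a suitable $\varphi$-independent constant $C$.

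The only conceptual point --- more the reason for the shape of the estimate than a genuine obstacle --- is why the exponent drops from $\tau$ to $\tau^2$. The map $x\mapsto\tilde{\rho}(x,\varphi(x))$ depends on $x$ through two channels of different regularity: the direct dependence on the first slot is genuinely $\tau$-H\"older, while the dependence through the second slot is a composition of two $\tau$-H\"older maps and hence only $\tau^2$-H\"older. The worse exponent $\tau^2$ governs the result, and the elementary comparison $\abs{x-y}^\tau\leq C_0\abs{x-y}^{\tau^2}$ --- valid precisely because $Q$ is bounded --- is what lets me absorb the better term into the worse one. Notably, no regularity of $\hat{\rho}$ beyond local $\tau$-H\"older continuity enters.
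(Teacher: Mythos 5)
Your proof is correct and takes essentially the same approach as the paper: the paper bounds the joint increment $\abs{(x,\varphi(x))-(y,\varphi(y))}^\tau$ in one step using the global $\tau$-H\"older continuity of $\tilde{\rho}$ together with $\abs{x-y}\leq \diam(Q)^{1-\tau}\abs{x-y}^\tau$, while you split through the intermediate point $(x,\varphi(y))$ by the triangle inequality --- a cosmetic difference. The ingredients (global H\"older continuity of the truncated right-hand side, H\"older continuity of $\varphi$, and boundedness of $Q$ to trade the exponent $\tau$ for $\tau^2$) and the resulting $\varphi$-independent constant are identical.
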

\begin{proof}
Recall that $\tilde{\rho}$ is bounded and $\tau$-H\"older continuous. Denote by $\tilde{c}:=[\tilde{\rho}]_{\cC^{0,\tau}(\overline{Q}\times\R)}$ the corresponding H\"older seminorm. For $x,y\in\overline{Q}$ there holds
\begin{align*}
\abs{\tilde{\rho}(x,\varphi(x))-\tilde{\rho}(y,\varphi(y))}&\leq \tilde{c}\abs{(x,\varphi(x))-(y,\varphi(y))}^\tau\\
&\leq \tilde{c}\left(\abs{x-y}+\norm{\varphi}_\hoeld{0,\tau}\abs{x-y}^\tau\right)^\tau\\
&\leq \tilde{c} \left(\diam(Q)^{(1-\tau)}+\norm{\varphi}_\hoeld{0,\tau}\right)^\tau \abs{x-y}^{\tau^2}.
\end{align*}
The statement follows.
\end{proof}
\begin{lemma}\label{lem:A2}
The map $\hoeld{0,\tau}\ni \varphi\mapsto \tilde{\rho}(\cdot,\varphi)\in \hoeld{0,\alpha}$ is continuous for every $\alpha\in (0,\tau^2)$.
\end{lemma}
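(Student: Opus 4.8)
The plan is to verify \emph{sequential} continuity: given $\varphi_k\to\varphi$ in $\hoeld{0,\tau}$, I would show $\tilde{\rho}(\cdot,\varphi_k)\to\tilde{\rho}(\cdot,\varphi)$ in $\hoeld{0,\alpha}$. The starting observation is that convergence in $\hoeld{0,\tau}$ in particular forces uniform convergence $\varphi_k\to\varphi$, since the $\hoeld{0,\tau}$-norm dominates $\norm{\cdot}_\infty$. As recorded in the proof of Theorem \ref{thm:ako}, the modified right-hand side $\tilde{\rho}$ is globally $\tau$-H\"older continuous on $\overline{Q}\times\R$; writing $\tilde{c}$ for its H\"older seminorm one obtains the pointwise bound
\[
\abs{\tilde{\rho}(x,\varphi_k(x))-\tilde{\rho}(x,\varphi(x))}\le \tilde{c}\,\abs{\varphi_k(x)-\varphi(x)}^\tau\le \tilde{c}\,\norm{\varphi_k-\varphi}_\infty^\tau,
\]
so that $w_k:=\tilde{\rho}(\cdot,\varphi_k)-\tilde{\rho}(\cdot,\varphi)\to 0$ uniformly. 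This already settles the $\cC^0$-part of the $\hoeld{0,\alpha}$-norm, and it remains only to control the H\"older seminorm of $w_k$.

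The extra ingredient is Lemma \ref{lem:A1}. Since a convergent sequence is bounded, $(\varphi_k)$ is bounded in $\hoeld{0,\tau}$, and hence by \eqref{eq:rho_inequaltiy} the images $\tilde{\rho}(\cdot,\varphi_k)$, and therefore the differences $w_k$, are bounded in the \emph{stronger} space $\hoeld{0,\tau^2}$, say $[w_k]_\hoeld{0,\tau^2}\le M$ for all $k$. I would then interpolate between uniform smallness and this uniform bound in the better seminorm. Because $\alpha<\tau^2$, splitting the pairs $x\ne y$ according to whether $\abs{x-y}\le\delta$ or $\abs{x-y}>\delta$ yields
\[
[w_k]_\hoeld{0,\alpha}\le M\,\delta^{\tau^2-\alpha}+2\,\norm{w_k}_\infty\,\delta^{-\alpha}.
\]
Given $\varepsilon>0$, I would first fix $\delta>0$ small enough that $M\delta^{\tau^2-\alpha}<\varepsilon/2$, and then, with $\delta$ now frozen, use $\norm{w_k}_\infty\to 0$ to make the second term $<\varepsilon/2$ for all large $k$. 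This gives $[w_k]_\hoeld{0,\alpha}<\varepsilon$ eventually, and combined with $\norm{w_k}_\infty\to 0$ we conclude $\norm{w_k}_\hoeld{0,\alpha}\to 0$, which is the assertion.

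There is no genuine obstacle here: the argument is simply the compact embedding $\hoeld{0,\tau^2}\hookrightarrow\hoeld{0,\alpha}$ made quantitative, and the exponent restriction $\alpha<\tau^2$ enters precisely to make $\delta^{\tau^2-\alpha}$ a positive power that can be made small. The only points that require a little care are the seminorm split displayed above and, above all, the \emph{order of the quantifiers} (choose $\delta$ from the bound $M$ first, only afterwards let $k\to\infty$). It is worth noting that the global—rather than merely local—H\"older continuity of $\tilde{\rho}$, guaranteed by the truncation built into its definition \eqref{eq:definition_of_rho_tilde}, is what lets the first step run uniformly on all of $\overline{Q}$ without any a priori compactness control on the ranges of the $\varphi_k$.
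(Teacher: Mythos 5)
Your proof is correct, and it rests on exactly the same two ingredients as the paper's: the global $\tau$-H\"older continuity of $\tilde{\rho}$ (giving the uniform convergence $\norm{w_k}_{\hoeld{0}}\leq \tilde{c}\,\norm{\varphi_k-\varphi}_{\hoeld{0}}^{\tau}\to 0$, which is precisely the last display in the paper's proof) and the uniform $\hoeld{0,\tau^2}$-bound supplied by Lemma \ref{lem:A1}. The only genuine difference is how the two pieces of information are combined. The paper uses the multiplicative interpolation inequality
\begin{equation*}
\norm{f}_\hoeld{0,\alpha}\leq 2\,\norm{f}_\hoeld{0}^{1-\frac{\alpha}{\tau^2}}\,\norm{f}_\hoeld{0,\tau^2}^{\frac{\alpha}{\tau^2}},
\end{equation*}
applied to $f=\tilde{\rho}(\cdot,\varphi_1)-\tilde{\rho}(\cdot,\varphi_2)$, whereas you split the $\hoeld{0,\alpha}$-seminorm at a distance scale $\delta$ (pairs with $\abs{x-y}\leq\delta$ controlled by the $\hoeld{0,\tau^2}$-bound, pairs with $\abs{x-y}>\delta$ by the sup norm) and handle the quantifiers in the right order. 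These are two standard implementations of the same compactness/interpolation principle, and both are complete. What the paper's version buys is a quantitative modulus: it shows the composition map is locally H\"older continuous on bounded subsets of $\hoeld{0,\tau}$, with exponent $\tau\bigl(1-\tfrac{\alpha}{\tau^2}\bigr)$, of which norm-continuity is an immediate consequence. Your version yields sequential continuity only, but that suffices since $\hoeld{0,\tau}$ is a normed space, and it is arguably more elementary, avoiding the slightly fiddly exponent bookkeeping of the multiplicative inequality.
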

\begin{proof}
This is shown in more generality in \cite[Proposition 6.2]{delaLlave}, except that the composition operator there is autonomous. Anyway, the proof for the case we need here is not too long. Let $\alpha\in(0,\tau^2)$ and $f\in\hoeld{0,\tau^2}$ be an arbitrary function. Then
\begin{align*}
[f]_\hoeld{0,\alpha}&=\sup_{x,y\in Q,x\neq y}\abs{f(x)-f(y)}^{1-\frac{\alpha}{\tau^2}}\frac{\abs{f(x)-f(y)}^{\frac{\alpha}{\tau^2}}}{\abs{x-y}^\alpha}\\
&\leq 2\norm{f}^{1-\frac{\alpha}{\tau^2}}_\hoeld{0}[f]_\hoeld{0,\tau^2}^\frac{\alpha}{\tau^2},
\end{align*}
which implies
\begin{equation}\label{eq:interpolation_inequality}
\norm{f}_\hoeld{0,\alpha}\leq 2\norm{f}^{1-\frac{\alpha}{\tau^2}}_\hoeld{0}\norm{f}_\hoeld{0,\tau^2}^\frac{\alpha}{\tau^2}.
\end{equation}
Now the stated continuity can be seen by an application of \eqref{eq:interpolation_inequality} to the function $f=\tilde{\rho}(\cdot,\varphi_1)-\tilde{\rho}(\cdot,\varphi_2)$, $\varphi_1,\varphi_2\in\hoeld{0,\tau}$, which as we know from Lemma \ref{lem:A1} is contained in $\hoeld{0,\tau^2}$, as well as the use of \eqref{eq:rho_inequaltiy} and
\begin{align*}
\norm{\tilde{\rho}(\cdot,\varphi_1)-\tilde{\rho}(\cdot,\varphi_2)}_\hoeld{0}\leq [\tilde{\rho}]_{\cC^{0,\tau}(\overline{Q}\times\R)}\norm{\varphi_1-\varphi_2}_\hoeld{0}^\tau.
\end{align*}
\end{proof}

\vspace{30pt}
\noindent\textbf{Acknowledgements.} 
The authors would like to thank V.I.Il'gisonis (State Atomic Energy Corporation Rosatom) and V.V.Ve\-de\-nya\-pin (Keldysh Institute of Applied Mathematics) for useful discussions.
B.Geb\-hard is very grateful for the hospitality of A.L.Skubachevskii, Y.O.Belyaeva and the RUDN group of Differential Equations during his two G-RISC research visits in Moscow.


%

\vspace{20pt}
\noindent Yulia O. Belyaeva\\
RUDN University, 6, Mikluhko--Maklaya str., Moscow, Russia\\
yilia-b@yandex.ru
\\

\noindent Bj\"orn Gebhard\\
Universit\"at Leipzig,  Augustusplatz 10, 04109 Leipzig,  Germany\\
bjoern.gebhard@math.uni-leipzig.de
 \\
 
\noindent Alexander L. Skubachevskii\\
RUDN University, 6, Mikluhko--Maklaya str., Moscow, Russia\\
skub@lector.ru 
%
\end{document}